\newtheorem{thm}{Theorem}[section]
\newtheorem{Pros}{Proposition}[section]
\newtheorem{lemma}{Lemma}[section]
\theoremstyle{definition}
\newtheorem{define}{Definition}[section]
\theoremstyle{remark}
\newtheorem{rem}{Remark}[section]
\numberwithin{equation}{section}
\begin{document}
\bigskip\bigskip

\centerline{\Large\bf On the global regularity for anisotropic dissipative
}
\centerline{\Large\bf surface quasi-geostrophic equation
}

\bigskip

\centerline{Zhuan Ye}
\medskip

\medskip\smallskip

\centerline{ Department of Mathematics and Statistics, Jiangsu Normal University, }
\medskip

\centerline{101 Shanghai Road, Xuzhou 221116, Jiangsu, PR China}
\medskip
\centerline{E-mail: \texttt{yezhuan815@126.com
}}

\bigskip
{\bf Abstract:}~~%
In this paper, we consider the two-dimensional surface quasi-geostrophic equation with fractional horizontal dissipation and fractional vertical thermal diffusion. Global existence of classical solutions is established when the dissipation powers are restricted to a suitable range. Due to the nonlocality of these 1D fractional operators, some of the standard energy estimate techniques no longer apply, to overcome this difficulty, we establish several anisotropic embedding and interpolation inequalities involving fractional derivatives. In addition, in order to bypass the unavailability of the classical Gronwall inequality, we establish a new logarithmic type Gronwall inequality, which may be of independent interest and potential applications.

\vskip .1in
{ {\bf AMS Subject Classification 2010:}\quad 35A05; 35B45; 35B65; 76D03.

 {\bf Keywords:}
Surface quasi-geostrophic equation; Anisotropic dissipation; Global regularity.}

\vskip .2in
\section{Introduction}

This paper concerns itself with the initial-value problem for the two-dimensional (2D)
surface quasi-geostrophic (abbr. SQG) equation with fractional horizontal dissipation and fractional vertical thermal
diffusion,  which
can be written as
\begin{equation}\label{SQG}
\left\{\aligned
&\partial_{t}\theta+(u \cdot \nabla)\theta+\mu\Lambda_{x_{1}}^{2\alpha}\theta+\nu\Lambda_{x_{2}}^{2\beta}\theta=0, \quad x=(x_{1},x_{2})\in \mathbb{R}^2, \,\, t>0,\\
&\theta(x, 0)=\theta_{0}(x),
\endaligned\right.
\end{equation}
where $\theta$ is a scalar real-valued function, $\mu\geq0,\,\nu\geq0,\, \alpha\in (0,\,1),\beta\in (0,\,1)$ are real constants, and the velocity ${u}\equiv (u_{1},\,u_{2})$ is determined by the Riesz transforms of the potential
temperature $\theta$ via the formula
$$ {u}=(u_{1},\,u_{2})=\left(-\frac{\partial_{x_{2}}}{ \sqrt{-\Delta}}\theta,\,\,\frac{\partial_{x_{1}}}{\sqrt{-\Delta}}\,\theta\right)
=(-\mathcal
{R}_{2}\theta,\,\,\mathcal {R}_{1}\theta):=\mathcal {R}^{\perp}\theta,$$
where $\mathcal{R}_{1}, \mathcal
{R}_{2}$ are the standard 2D Riesz transforms. Clearly, the velocity ${u}=(u_{1},\,u_{2})$ is divergence free, namely $\partial_{x_{1}} u_{1}+\partial_{x_{2}}u_{2}=0$.
The fractional operators $\Lambda_{x_{1}}:= \sqrt{-\partial_{x_{1}}^{2}}$ and $\Lambda_{x_{2}}:= \sqrt{-\partial_{x_{2}}^{2}}$ are defined through the Fourier transform, namely
$$
\widehat{\Lambda_{x_{1}}^{2\alpha}
f}(\xi)=|\xi_{1}|^{2\alpha}\hat{f}(\xi),\qquad \widehat{\Lambda_{x_{2}}^{2\beta}
f}(\xi)=|\xi_{2}|^{2\beta}\hat{f}(\xi),
$$
where
$$\hat{f}(\xi)=\frac{1}{(2\pi)^{2}}\int_{\mathbb{{R}}^{2}}{e^{-ix\cdot\xi}f(x)\,dx}.$$
The SQG equation arises from the geostrophic study of the highly rotating flow (see for instance \cite{PG}).
In particular, it is the special
case of the general quasi-geostrophic approximations for atmospheric and oceanic fluid flow with small Rossby and Ekman numbers, see \cite{CMT,PG} and the references cited there. Mathematically, as pointed out by Constantin, Majda and Tabak \cite{CMT}, the inviscid SQG equation (i.e., \eqref{SQG} with $\mu=\nu=0$) shares many parallel properties with those of the 3D Euler equations such as the vortex-stretching mechanism and thus
serves as a lower-dimensional model of the 3D Euler equations.
We remark that the inviscid SQG equation is probably among the simplest scalar partial differential equations, however, the global regularity problem still remains open.

\vskip .1in
The system (\ref{SQG}) is deeply related to the classical fractional dissipative SQG equation, with its form as follows
\begin{equation}\label{frSQG}
\left\{\aligned
&\partial_{t}\theta+(u \cdot \nabla)\theta+\mu\Lambda^{2\alpha}\theta=0, \\
&\theta(x, 0)=\theta_{0}(x),
\endaligned\right.
\end{equation}
where the classical fractional Laplacian operator $\Lambda^{2\alpha}:=(-\Delta)^{\alpha}$ is defined through the Fourier transform, namely
$$\widehat{\Lambda^{2\alpha}
f}(\xi)=|\xi|^{2\alpha}\hat{f}(\xi).$$
Obviously, the above system (\ref{frSQG}) can be deduced from the system (\ref{SQG}) with $\alpha=\beta$ and $\mu=\nu$. Because of its important physical background and profound mathematical significance,
the SQG equation attracts interest of scientists and mathematicians.
The first mathematical studies of the SQG equation was carried
out in 1994s by Constantin, Majda and Tabak \cite{CMT}, where they considered the inviscid SQG case, and established the local well-posedness and blow-up criterion in the Sobolev
spaces. Since then, the global regularity issue concerning the SQG has recently been studied very extensively and important progress has been made (one can see \cite{CCCGW} for a long list of references). Let us briefly recall some related works on the system (\ref{frSQG}).
Due to the battle between the orders of the
nonlinear term and the dissipation, the cases $\alpha>\frac{1}{2}$, $\alpha=\frac{1}{2}$ and $\alpha<\frac{1}{2}$ are called sub-critical, critical and super-critical, respectively.
The global regularity of the SQG equation seems to be in a satisfactory situation in the subcritical and critical cases.
The subcritical case has been essentially resolved in \cite{CW3111,Re} (see also \cite{DL,JUNING,SS} and references therein). Constantin, C$\rm \acute{o}$rdoba and Wu in \cite{CCW}
first addressed the global regularity issue for the critical case
and obtained a small data global existence result. More precisely, they showed that there is a unique
global solution when $\theta_{0}$ is in the critical space $H^{1}$ under a smallness assumption on $\|\theta_{0}\|_{L^{\infty}}$.
In fact, due to the balance of the nonlinear term and
the dissipative term in (\ref{frSQG}), the global existence of the critical case is a very
challenge issue, whose global regularity without small condition has
been successfully established by two elegant papers with totally
different approaches, namely Caffarelli-Vasseur \cite{CV1} via the De Giorgi iteration method and  Kiselev, Nazarov-Volberg \cite{KNV} relying on a new non local maximum principle. We also refer to  Kiselev-Nazarov\cite{KN2} and Constantin-Vicol \cite{CV} for another two delicate and still quite
different proofs of the same issue. See also the works \cite{Abidi1,DongD3,DongLi2,MX1} where same type of results have been obtained.
However, in terms of the supercritical case whether solutions (for large data)
remain globally regular or not is a remarkable open problem.
Although the global well-posedness for arbitrary initial data is still open for the supercritical SQG equation, some interesting regularity criteria (see for example \cite{CW3111,CCW111,DongLi2,Dongpa}) and small data global existence results (see for instance \cite{CMZ,CAD,Chae20031,JUNING,WangZ,WXY}) have been established.
Moreover, the global existence of weak solutions and the eventual regularity of the corresponding weak solutions to supercritical SQG equation have been
established (see, e.g. \cite{Re,Dab,Kis2011,Sil,ZV}).
For many other interesting results on the SQG equation, we refer to \cite{ConstantiW08,DongD3,SilVicolaz,CCCF05,W5}, just to mention a few.

\vskip .1in

As stated in the previous paragraph, on the one hand, it is not hard to establish the global regularity for the SQG equation (\ref{frSQG}) with $\alpha>\frac{1}{2}$. However, on the
other hand, the global regularity problem of the inviscid SQG equation is still an open problem.
Comparing these two extreme cases, it is natural for us to consider the intermediate cases.
Note that in all the papers mentioned above, the equation is assumed to have the standard fractional dissipation. In fact, compared with the SQG equation with the standard fractional dissipation, little has been done for the system (\ref{SQG}) as many techniques such as integration by parts no longer apply. Very recently, the author with collaborators in \cite{WXYjmfm} proved the global regularity result of the system (\ref{SQG}) with $\mu>0,\,\nu=0,\,\alpha=1$ or $\mu=0,\,\nu>0,\,\beta=1$.
In this paper, we consider the intermediate case to explore how fractional horizontal dissipation and fractional vertical thermal
diffusion would affect
the regularity of solutions to the SQG equation. To the best of
our knowledge, such system of equation as in (\ref{SQG}) has never been studied before. The main purpose of this paper is to
establish the global regularity when the dissipation powers are restricted to a suitable range. More specifically, the main result of this paper is the following global regularity result.
\begin{thm}\label{ThSQG}
Let $\theta_{0}\in H^{s}(\mathbb{R}^{2})$ for $s\geq2$. If $\alpha\in (0,\,1)$ and $\beta\in (0,\,1)$ satisfy
\begin{equation}\label{sdf2334}
\beta>\left\{\aligned
&\frac{1}{2\alpha+1},\qquad 0<\alpha\leq \frac{1}{2},\\
&\frac{1-\alpha}{2\alpha},\ \qquad \frac{1}{2}<\alpha<1,
\endaligned\right.
\end{equation}
then the system (\ref{SQG})
admits a unique global solution $\theta$ such that for any given $T>0$,
$$\theta\in C([0, T]; H^{s}(\mathbb{R}^{2})),\quad \Lambda_{x_{1}}^{\alpha}\theta,\ \Lambda_{x_{2}}^{\beta}\theta \in L^{2}([0, T]; H^{s}(\mathbb{R}^{2})).
$$
\end{thm}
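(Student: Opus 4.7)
The plan is to combine a standard local existence theory for the Cauchy problem (\ref{SQG}) in $H^s(\mathbb{R}^2)$, $s\geq 2$, with a priori Sobolev bounds valid on any finite time interval $[0,T]$. Local existence can be obtained by a Friedrichs mollification or successive-approximation scheme, exactly as in the classical fractional SQG case; since the one-directional operators $\Lambda_{x_i}^{\gamma}$ are positive pseudo-differential regularizers in the respective variable, this step is routine. The rest of the argument therefore reduces, via the standard blow-up criterion, to propagating $\|\theta(t)\|_{H^s}$ globally.

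I would first accumulate the basic estimates. Testing (\ref{SQG}) against $\theta$ and against $|\theta|^{p-2}\theta$, using $\nabla\cdot u=0$ together with the pointwise C\'ordoba--C\'ordoba inequality applied in $x_1$ and $x_2$ separately (treating the transverse variable as a parameter), gives the $L^p$ maximum principle $\|\theta(t)\|_{L^p}\leq\|\theta_0\|_{L^p}$ for all $p\in[2,\infty]$, together with the dissipation identity
\begin{equation*}
\|\theta(t)\|_{L^2}^2+2\mu\!\int_0^t\!\|\Lambda_{x_1}^{\alpha}\theta\|_{L^2}^2\,d\tau+2\nu\!\int_0^t\!\|\Lambda_{x_2}^{\beta}\theta\|_{L^2}^2\,d\tau=\|\theta_0\|_{L^2}^2.
\end{equation*}

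The heart of the argument is a global $H^1$ bound. Differentiating (\ref{SQG}) and pairing with $\nabla\theta$, the only nonlinear contribution is the cubic term $\mathcal I=-\int(\partial_i u_j)(\partial_j\theta)(\partial_i\theta)\,dx$. Writing $u=\mathcal R^{\perp}\theta$ and using $L^p$-boundedness of Riesz transforms, $\mathcal I$ must be dominated by the anisotropic dissipation $\mu\|\Lambda_{x_1}^{\alpha}\nabla\theta\|_{L^2}^2+\nu\|\Lambda_{x_2}^{\beta}\nabla\theta\|_{L^2}^2$. For this I would establish a family of \emph{anisotropic interpolation inequalities} of the schematic form
\begin{equation*}
\|f\|_{L^r(\mathbb{R}^2)}\leq C\,\|f\|_{L^2}^{\kappa_0}\,\|\Lambda_{x_1}^{\alpha}f\|_{L^2}^{\kappa_1}\,\|\Lambda_{x_2}^{\beta}f\|_{L^2}^{\kappa_2},\qquad \kappa_0+\kappa_1+\kappa_2=1,
\end{equation*}
with exponents fixed by the scaling relation $\tfrac12-\tfrac1r=\alpha\kappa_1+\beta\kappa_2$; these should be provable by iterating one-dimensional fractional Gagliardo--Nirenberg bounds pointwise in the transverse variable. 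Condition (\ref{sdf2334}) is exactly what forces the relevant exponents to satisfy $\kappa_1,\kappa_2\leq 1$, so that the cubic estimate for $\mathcal I$, after Young's inequality, is absorbed into the dissipation up to a quadratic remainder in $\|\nabla\theta\|_{L^2}$ with coefficient controlled by the already-bounded $L^p$ norms. Grönwall's lemma then yields $\|\nabla\theta\|_{L^\infty_tL^2_x}+\|\Lambda_{x_i}^{\bullet}\nabla\theta\|_{L^2_tL^2_x}\leq C(T)$.

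Propagation to $H^s$ is then carried out by applying $\Lambda^s$ to (\ref{SQG}) and testing against $\Lambda^s\theta$. The Kato--Ponce commutator estimate produces the standard quantity $(\|\nabla u\|_{L^\infty}+\|\nabla\theta\|_{L^\infty})\|\Lambda^s\theta\|_{L^2}^2$; since Riesz transforms do not map $L^\infty$ into itself, one substitutes the logarithmic endpoint bound $\|\nabla u\|_{L^\infty}\lesssim 1+\|\nabla\theta\|_{L^\infty}\log(e+\|\theta\|_{H^s})$, while $\|\nabla\theta\|_{L^\infty}$ is controlled by the anisotropic interpolation inequalities above applied to $\nabla\theta$ and its fractional derivatives (which are in turn controlled by the $H^1$-level bounds just obtained). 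Closing the resulting differential inequality is exactly where the new logarithmic Grönwall lemma announced in the abstract must enter. The main obstacle, and the step I expect to absorb most of the technical work, is the derivation of the sharp anisotropic interpolation inequalities covering the full range (\ref{sdf2334}): the two-branch form of the hypothesis reflects the different regimes in which horizontal versus vertical dissipation must carry the worst directional component of $\nabla\theta$ inside $\mathcal I$, and without a sharp form of these inequalities the nonlinearity cannot be absorbed into the dissipation.
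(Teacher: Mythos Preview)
Your proposal misplaces the central difficulty. You assert that at the $H^1$ level the cubic term $\mathcal I$ can be absorbed into the dissipation ``up to a quadratic remainder in $\|\nabla\theta\|_{L^2}$ with coefficient controlled by the already-bounded $L^p$ norms,'' and that standard Gr\"onwall then closes the estimate. This is precisely the step that fails. After integrating by parts and using commutator bounds, the coefficient in front of $\|\nabla\theta\|_{L^2}^2$ is not a power of $\|\theta\|_{L^\infty}$ but a power of $\|u\|_{L^\infty}$, and since $u=\mathcal R^\perp\theta$ with the Riesz transform unbounded on $L^\infty$, this quantity is \emph{not} a priori controlled. The paper handles this by invoking the logarithmic endpoint inequality $\|u\|_{L^\infty}\lesssim 1+\|\theta\|_{L^\infty}\ln(e+\|\Lambda^\sigma\theta\|_{L^2})$ already at the $H^1$ stage, which produces a differential inequality of the form
\[
A'(t)+B(t)\le C\big(A(t)+e\big)+C\big(\ln(A(t)+B(t)+e)\big)^{\varrho}\big(A(t)+e\big),\qquad \varrho>1,
\]
with $A=\|\nabla\theta\|_{L^2}^2$ and $B=\|\Lambda_{x_1}^\alpha\nabla\theta\|_{L^2}^2+\|\Lambda_{x_2}^\beta\nabla\theta\|_{L^2}^2$. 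Because $\varrho>1$, no existing logarithmic Gr\"onwall lemma applies; the new ingredient (Lemma~\ref{addLdem01}) is the observation that the dissipation enforces $B(t)\ge C_1 A(t)^\gamma$ for some $\gamma>1$ via one-dimensional interpolation, and this structural gain is what allows the inequality to close. You do not mention this relation, and without it the $H^1$ step does not go through.

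Correspondingly, you locate the novel Gr\"onwall argument at the $H^s$ step, but in the paper that step is entirely classical: once the $H^1$ bound is in hand, an anisotropic triple-product inequality (Lemma~\ref{triple}) yields an $H^2$ bound by ordinary Gr\"onwall, and then $\|\nabla u\|_{L^\infty}+\|\nabla\theta\|_{L^\infty}$ is controlled in $L^1_t$ by interpolation against the $H^2$-level dissipation, so the $H^s$ energy inequality closes with the standard Gr\"onwall lemma, not a logarithmic one. Your scheme of anisotropic $L^r$ interpolation with exponents $\kappa_0+\kappa_1+\kappa_2=1$ also cannot, by itself, close the $H^1$ cubic term: a scaling count shows that the resulting powers of the dissipation sum to strictly more than $2$ unless one borrows an $L^\infty$ factor, which brings you back to $\|u\|_{L^\infty}$.
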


\vskip .1in
We outline the main ideas and difficulties in the proof of this theorem.
Since the local well-posedness of (\ref{SQG}) follows from a standard procedure, a large portion of the efforts are devoted to obtaining global {\it a
priori} bounds for $\theta$ on the interval $[0,\,T]$ for any given $T>0$. For the sake of completeness, the local well-posedness part is presented in Appendix \ref{appd12}. The proof is largely divided into two steps, namely, the global $H^1$-estimate and the global $H^2$-estimate.
The first difficulty comes from the presence of the general 1D fractional Laplacian
dissipation which is a nonlocal operator, and thus some of the standard energy estimate techniques such as integration by parts no longer apply. Concerning the difficulty
caused by the presence of the 1D nonlocal operator, we need to establish the anisotropic embedding
and the interpolation inequalities involving fractional derivatives. The second major difficulty lies in the unboundedness of the Riesz transform between the space $L^{\infty}$. More precisely, if one tries to establish the global $H^1$-estimate, then one needs to control the quantity $\|u(t)\|_{L_{x}^{\infty}}$. However, due to the relation $u=\mathcal {R}^{\perp}\theta$, the boundedness of $\|u(t)\|_{L_{x}^{\infty}}$ is obviously not guaranteed even if we have $\|\theta(t)\|_{L_{x}^{\infty}}\leq \|\theta_{0}\|_{L_{x}^{\infty}}$. To overcome this kind of difficulty, one may resort to following logarithmic Sobolev interpolation inequality
\begin{equation}\label{cdefw34tg}\|f\|_{L^{\infty}}\leq C(1+\|f\|_{L^{2}}+\|f\|_{\dot{B}_{\infty,\,\infty}^{0}}\ln \Big(e+\|\Lambda^{\sigma}f\|_{L^{2}} \Big),\quad \forall \sigma>1. \end{equation}
Invoking several techniques and (\ref{cdefw34tg}), the resulting corresponding $H^1$-estimate of $\theta$ is of the following differential inequality with some $\varrho>1$
\begin{equation}\label{jhhk98078}
\frac{d}{dt}A(t)+
 B(t)
 \leq \widetilde{C_{1}} \big(A(t)+e\big)+\widetilde{C_{2}} \Big(\ln\big(A(t)+B(t)+e \big)\Big)^{\varrho}\big(A(t)+e\big)
 \end{equation}
for some absolute constants $\widetilde{C_{1}}>0$ and $\widetilde{C_{2}}>0$.
With \eqref{jhhk98078} in hand, the natural next step would be to make use of the logarithmic Gronwall
inequality, but the power $\varrho>1$ leads to the unavailability of the known Gronwall inequality, also including the very recent result (Lemma 2.3 of \cite{LT}). This motives us to consider the relationship between $A(t)$ and $B(t)$.
As a matter of fact, by fully exploiting of the dissipation of the SQG equation (\ref{SQG}), we obtain the key estimate
\begin{equation}\label{sdjhhk98074}B(t)\geq C_{1}A^{\gamma}(t),\quad \gamma>1\end{equation}
for some absolute constant $C_{1}>0$.
Fortunately, if the relationship $(\ref{sdjhhk98074})$ holds, then it indeed implies the boundedness of the quantity $A(t)$ (see Lemma \ref{addLdem01} for details), which is nothing but the desired global $H^1$-estimate.
Next, we are able to obtain the global $H^2$-estimate by combining the anisotropic Sobolev inequality (see Lemma \ref{triple}) and the obtained global $H^1$-estimate. Finally,
the global existence of $H^{s}$-estimate follows directly.

\vskip .2in

The method adopted in proving Theorem \ref{ThSQG} may also be adapted with almost no change to the study of a more general case: $u=\mathbf{T}[\theta]$, where $\mathbf{T}$ is a divergence
free zero order operator. For example, we consider the following 2D incompressible porous
medium equation with partial dissipation:
\begin{equation}\label{PPM}
\left\{\aligned
&\partial_{t}\theta+( {u}\cdot\nabla)\theta+\mu\Lambda_{x_{1}}^{2\alpha}\theta+\nu\Lambda_{x_{2}}^{2\beta}\theta=0, \quad x=(x_{1},x_{2})\in \mathbb{R}^2, \,\, t>0, \\
& {u}=-\nabla p-\theta e_{2},\\
&\nabla\cdot {u}=0,\\
&\theta(x, 0)=\theta_{0}(x).
\endaligned\right.
\end{equation}
More precisely, the result can be stated as follows.
\begin{thm}\label{ThSpmx}
Let $\theta_{0}\in H^{s}(\mathbb{R}^{2})$ for $s\geq2$. If $\alpha\in (0,\,1)$ and $\beta\in (0,\,1)$ satisfy (\ref{sdf2334}),
then the system (\ref{PPM})
admits a unique global solution $\theta$ such that for any given $T>0$,
$$\theta\in L^{\infty}([0, T]; H^{s}(\mathbb{R}^{2})),\quad \Lambda_{x_{1}}^{\alpha}\theta,\ \Lambda_{x_{2}}^{\beta}\theta \in L^{2}([0, T]; H^{s}(\mathbb{R}^{2})).
$$
\end{thm}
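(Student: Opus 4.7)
The key structural observation is that, just as in the SQG case, the velocity field in the IPM system \eqref{PPM} is recovered from $\theta$ via a Calder\'on--Zygmund-type singular integral. Indeed, taking the divergence of $u=-\nabla p-\theta e_{2}$ and using $\nabla\cdot u=0$ gives $-\Delta p=\partial_{x_{2}}\theta$, hence
$$
u=\nabla\Delta^{-1}\partial_{x_{2}}\theta-\theta e_{2}=\bigl(-\mathcal{R}_{1}\mathcal{R}_{2}\theta,\ \mathcal{R}_{1}^{2}\theta\bigr),
$$
so both components $u_{1},u_{2}$ are bounded operators on every $L^{p}$ with $1<p<\infty$, on $\dot{B}^{0}_{\infty,\infty}$, and on every inhomogeneous Sobolev space, with exactly the operator-theoretic properties that the proof of Theorem \ref{ThSQG} exploits for $u=\mathcal{R}^{\perp}\theta$. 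In particular, $\nabla\cdot u=0$ and $\|u\|_{X}\leq C\|\theta\|_{X}$ in each of the function spaces $X$ that appear in the SQG argument.

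Granting this, I would carry out the proof by repeating the strategy outlined after Theorem \ref{ThSQG}, step by step. First, the basic $L^{p}$ maximum principles for $\theta$ (for $p\in[2,\infty]$) follow from the divergence-free transport structure together with the positivity of the nonlocal operators $\Lambda_{x_{1}}^{2\alpha}$ and $\Lambda_{x_{2}}^{2\beta}$ under the duality pairing with $|\theta|^{p-2}\theta$. Second, for the global $H^{1}$-estimate I would test the equation against $\Lambda^{2}\theta$, use the anisotropic embedding and interpolation inequalities involving fractional derivatives to handle the nonlinear term, and bound the critical factor $\|u(t)\|_{L^{\infty}_{x}}$ through the logarithmic Sobolev inequality
$$
\|f\|_{L^{\infty}}\leq C\Bigl(1+\|f\|_{L^{2}}+\|f\|_{\dot{B}^{0}_{\infty,\infty}}\ln\bigl(e+\|\Lambda^{\sigma}f\|_{L^{2}}\bigr)\Bigr),\qquad \sigma>1,
$$
combined with the $\dot{B}^{0}_{\infty,\infty}$-boundedness of Riesz transforms. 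This produces the logarithmic-type Gronwall inequality \eqref{jhhk98078} with $A(t)=\|\theta(t)\|_{H^{1}}^{2}$ and $B(t)$ the associated anisotropic dissipative quantity.

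The third and decisive step is to extract from the anisotropic dissipation the coercivity bound \eqref{sdjhhk98074}, namely $B(t)\geq C_{1}A(t)^{\gamma}$ with some $\gamma>1$. This is where the assumption \eqref{sdf2334} enters, and it is identical in form to the SQG case because only self-adjointness of the one-dimensional fractional operators and fractional anisotropic interpolation are used. Applying Lemma \ref{addLdem01} then yields the global $H^{1}$-bound; the $H^{2}$-estimate follows by coupling Lemma \ref{triple} with this $H^{1}$-control, and propagation to $H^{s}$ for any $s\geq 2$ is then a standard commutator estimate combined with Gronwall's inequality. Uniqueness follows from an $L^{2}$-difference estimate using $\nabla\cdot u=0$ and the boundedness of $\theta\mapsto u$ on $L^{2}$. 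The main obstacle is the verification that no step in the SQG argument secretly uses the specific form $u=\mathcal{R}^{\perp}\theta$ beyond $L^{p}$- and Besov-boundedness of the map $\theta\mapsto u$; the extra additive term $-\theta e_{2}$ present in the IPM velocity is in fact strictly easier to absorb, since it is of the same order as $\theta$ and thus contributes only lower-order terms in every energy estimate.
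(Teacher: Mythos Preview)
Your proposal is correct and follows essentially the same approach as the paper: the paper's entire argument (contained in the remark immediately after the statement) is precisely the observation that $u=(-\mathcal{R}_{1}\mathcal{R}_{2}\theta,\,\mathcal{R}_{1}\mathcal{R}_{1}\theta)$, after which the proof of Theorem~\ref{ThSQG} carries over verbatim since only the $L^{p}$-, Sobolev-, and $\dot{B}^{0}_{\infty,\infty}$-boundedness of the map $\theta\mapsto u$ is used. Your closing remark about the ``extra additive term $-\theta e_{2}$'' is unnecessary, since that term has already been absorbed into the identity $u_{2}=\mathcal{R}_{1}^{2}\theta$ you derived earlier, but this does not affect correctness.
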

\begin{rem}
As a matter of fact, the equation ${u}=-\nabla p-\theta e_{2}$ and the incompressible condition $\nabla\cdot {u}=0$ allow us to conclude
$$ {u}=(-\mathcal{R}_{1}\mathcal{R}_{2}\theta,\,\,\mathcal
{R}_{1}\mathcal {R}_{1}\theta).$$
Whence, performing the same manner as adopted in proving Theorem \ref{ThSQG}, one may complete the proof of Theorem \ref{ThSpmx} immediately. To avoid redundancy, we omit the details.
 \end{rem}

\vskip .2in
The present paper is organized as follows. In Section 2, we provide several useful lemmas which play a key role in the main proof. Then we dedicate to the proof of Theorem \ref{ThSQG} in Section 3. Besov spaces and several inequalities are collected in Appendix \ref{appd11}. For convenience, we present the local well-posedness theory of (\ref{SQG}) in Appendix \ref{appd12}.

 \vskip .4in
\section{Preliminaries}\setcounter{equation}{0}
In this section, we collect some preliminary results, including a logarithmic type
Gronwall inequality, an anisotropic Sobolev inequality and several interpolation inequalities involving fractional derivatives, which will be used in the rest
of this paper. In this paper, all constants will be denoted
by $C$ that is a generic constant depending only on the quantities specified in the context.
If we need $C$ to depend on a parameter, we shall indicate this by subscripts.

\vskip .1in
We first establish the following logarithmic type Gronwall
inequality which will play an important role in the proof of Theorem
\ref{ThSQG}.
\begin{lemma}\label{addLdem01}
Assume that $l(t),\,m(t),\,n(t)$ and $f(t)$ are all nonnegative and integrable functions on $(0, T)$ for any given $T>0$.
Let $A\geq 0$ and $B\geq 0$ be two
absolutely continuous functions on $(0, T)$ satisfying for any $t\in (0, T)$
\begin{equation} \label{difeqedd}
A'(t)+B(t)\leq \Big[l(t)+m(t)\ln \big(A(t)+e\big) +n(t)\big(\ln(A(t)+B(t)+e)\big)^{\alpha}\Big] \big(A(t)+e\big)+f(t)\quad \end{equation}
with $\alpha>1$.
Assume further that for some positive constant $C_{1}>0$
\begin{equation} \label{ffscx1eef}B(t)\geq C_{1}A^{\gamma}(t),\quad \gamma>1,\end{equation}
and for constants $K\in[0,\,\infty)$, $\beta\in [0,\,\frac{\gamma-1}{\gamma})$ such that for any $t\in (0, T)$
\begin{equation}
n(t)\leq K \big(A(t)+B(t)+e\big)^{\beta}.\nonumber
\end{equation}
Then the following estimate holds true
\begin{equation} \label{ffscx}A(t)+\int_{0}^{t}{B(s)\,ds}\leq
\widetilde{C}(C_{1},l,m,n,f,\alpha,\beta,\gamma,K,t),\end{equation}
for any $t\in (0, T)$. In particular, for the case $\beta=0$, namely,
\begin{equation}
n(t)\leq K,\nonumber
\end{equation}
the estimate (\ref{ffscx}) still holds true.
\end{lemma}
\begin{rem}\rm
It is worthwhile to mention that Li-Titi \cite{LT} established a logarithmic type Gronwall
inequality with $\alpha\leq1$ and $\beta=0$, but without the restriction (\ref{ffscx1eef}), we also refer to Cao-Li-Titi \cite{CLT} for more general result. We also point out that the restriction $\alpha\leq1$ is a crucial condition in the previous works. In fact, the differential inequality (\ref{difeqedd}) with $\alpha>1$ appears easily when we handle the well-posedness issue of PDEs. By take fully exploit of the hidden information of the fluid mechanic with some certain dissipation, we have the key observation that the condition (\ref{ffscx1eef}) may be true, and thus it can relax $\alpha$ to $\alpha>1$. This motives us to establish a logarithmic type Gronwall
inequality like Lemma \ref{addLdem01}, which may be of independent interest and potential applications.
\end{rem}

\begin{proof}[Proof of Lemma \ref{addLdem01}]
First, denoting
$$A_{1}:= A+e+\sigma,\qquad B_{1}:= A+B+e+\sigma,$$
where $\sigma>0$ to be fixed hereafter, we thus obtain
\begin{align}\label{G001}
A'_{1}+B_{1} =&A'+A+B+e+\sigma\nonumber\\
 \leq&\Big[l(t)+m(t)\ln(A+e)+n(t)\big(\ln(A+B+e)\big)^{\alpha}\Big](A+e)+A+e+\sigma+f(t)
\nonumber\\
 =&\Big[l(t)+m(t)\ln (A_{1}-\sigma)+n(t)\big(\ln (B_{1}-\sigma)\big)^{\alpha}\Big](A_{1}-\sigma)+A_{1}+f(t)
\nonumber\\
 \leq&\Big[1+l(t)+m(t)\ln A_{1}+n(t)\big(\ln B_{1}\big)^{\alpha}\Big]A_{1}+f(t).
\end{align}
Dividing both sides of the above differential inequality (\ref{G001}) by $A_{1}$ and using the fact $A_{1}\geq1$, we further have
\begin{eqnarray}\label{G002}(\ln A_{1})'+\frac{B_{1}}{A_{1}}\leq 1+l(t)+m(t)\ln A_{1}+n(t)\big(\ln B_{1}\big)^{\alpha} + f(t).\end{eqnarray}
It follows from (\ref{ffscx1eef}) that
\begin{equation} \label{322gye001}
B_{1}(t)\geq \frac{C_{1}}{2^{\gamma-1}}A_{1}^{\gamma}(t),\quad \gamma>1 .
\end{equation}
As a matter of fact, one has
\begin{align}
B_{1} =&A+B+e+\sigma 
 = A_{1}+B  
\geq A_{1}+C_{1}A^{\gamma}
= A_{1}+C_{1}(A_{1}-e-\sigma)^{\gamma}
\nonumber\\
=&\Big(\frac{1}{A_{1}^{\gamma-1}}+C_{1}\big(1-\frac{e+\sigma}{A_{1}}\big)^{\gamma}\Big) A_{1}^{\gamma}
\geq  \max\Big\{ \frac{1}{2^{\gamma-1}(\sigma+e)^{\gamma-1}},\,\frac{C_{1}}{2^{\gamma}}\Big\}A_{1}^{\gamma}
\nonumber\\
\geq&  \frac{C_{1}}{2^{\gamma-1}} A_{1}^{\gamma},\nonumber
\end{align}
where in the sixth line we have used
\begin{equation*}
\frac{1}{A_{1}^{\gamma-1}}+C_{1}\big(1-\frac{\sigma+e}{A_{1}}\big)^{\gamma}\geq
\left\{\aligned
& \frac{1}{2^{\gamma-1}(\sigma+e)^{\gamma-1}},  \qquad \sigma+e\leq A_{1}\leq 2(\sigma+e),\\
&\frac{C_{1}}{2^{\gamma}},   \quad  \qquad \qquad \ \qquad A_{1}\geq2(\sigma+e),
\endaligned\right.
\end{equation*}
and in the last line we have taken $\sigma$ satisfying
$$\sigma\geq \Big(\frac{2}{C_{1}}\Big)^{\frac{1}{\gamma-1}}-e\Rightarrow \frac{C_{1}}{2^{\gamma}}\geq\frac{1}{2^{\gamma-1}(\sigma+e)^{\gamma-1}}.$$
Now under the assumption of (\ref{322gye001}), we will show the key bound
\begin{equation} \label{322gye002}
\big(\ln B_{1}\big)^{\alpha}\leq C_{2}\frac{B_{1}^{\theta_{1}}}{A_{1}^{\theta_{2}}}+C_{3}\ln A_{1},
\end{equation}
where $C_{3},\,C_{4},\,\theta_{1},\,\theta_{2}$ are positive constants satisfying $\theta_{2}< \gamma \theta_{1}$. To this end, we define a function
$$F(B_{1})=C_{2}\frac{B_{1}^{\theta_{1}}}{A_{1}^{\theta_{2}}}+C_{3}\ln A_{1}-\big(\ln B_{1}\big)^{\alpha}.$$
Next we will find some conditions to guarantee that $F(B_{1})$ is a nondecreasing function for $B_{1} \geq \frac{C_{1}}{2^{\gamma-1}}A_{1}^{\gamma}$. As a result, if (\ref{322gye002}) holds, then it suffices
$$F(B_{1})\geq F\big(\frac{C_{1}}{2^{\gamma-1}}A_{1}^{\gamma}\big)=\frac{C_{2}C_{1}^{\theta_{1}}}{2^{(\gamma-1)\theta_{1}}}A_{1}^{\gamma \theta_{1}-\theta_{2}}+C_{3}\ln A_{1}-\Big(\ln C_{1}-(\gamma-1)\ln 2+\gamma \ln A_{1}\Big)^{\alpha}.$$
Thanks to $\theta_{2}< \gamma \theta_{1}$, it is not hard to check that there exists a suitable large $\sigma_{1}=\sigma_{1}(C_{1},C_{2},C_{3},\alpha,\gamma,\theta_{1},\theta_{2})>0$ such that for all $\sigma\geq \sigma_{1}$, we have
$$\frac{C_{2}C_{1}^{\theta_{1}}}{2^{(\gamma-1)\theta_{1}}}A_{1}^{\gamma \theta_{1}-\theta_{2}}+C_{3}\ln A_{1}-\Big(\ln C_{1}-(\gamma-1)\ln 2+\gamma \ln A_{1}\Big)^{\alpha}\geq0.$$
In order to show the non decreasing property of $F(B_{1})$, we differentiate it to get
$$F'(B_{1})=\Big(C_{2}\theta_{1}\frac{B_{1}^{\theta_{1}}}{A_{1}^{\theta_{2}}}-\alpha\big(\ln B_{1}\big)^{\alpha-1}\Big)\frac{1}{B_{1}}.$$
By the fact $B_{1} \geq \frac{C_{1}}{2^{\gamma-1}}A_{1}^{\gamma}$, one has
$$C_{2}\theta_{1}\frac{B_{1}^{\theta_{1}}}{A_{1}^{\theta_{2}}}-\alpha\big(\ln B_{1}\big)^{\alpha-1}\geq
C_{2}\theta_{1}\Big(\frac{C_{1}}{2^{\gamma-1}}\Big)^{\frac{\theta_{2}}{\gamma}}
B_{1}^{\theta_{1}-\frac{\theta_{2}}{\gamma}}-\alpha\big(\ln B_{1}\big)^{\alpha-1}.$$
Similarly, one can show that there exists a suitable large $\sigma_{2}=\sigma_{2}(C_{1},C_{2},\alpha,\gamma,\theta_{1},\theta_{2})>0$ such that for all $\sigma\geq \sigma_{2}$, we obtain
$$C_{2}\theta_{1}\Big(\frac{C_{1}}{2^{\gamma-1}}\Big)^{\frac{\theta_{2}}{\gamma}}
B_{1}^{\theta_{1}-\frac{\theta_{2}}{\gamma}}-\alpha\big(\ln B_{1}\big)^{\alpha-1}\geq0.$$
Now the above bound yields that $F'(B_{1})\geq0$ for $B_{1} \geq \frac{C_{1}}{2^{\gamma-1}}A_{1}^{\gamma}$.
Combining the above analysis, if we take $\sigma\geq \max\{\sigma_{1},\,\sigma_{2}\}$, then the desired (\ref{322gye002}) indeed holds.
Notice that
$$ n(t)\leq K \big(A(t)+B(t)+e\big)^{\beta}\leq KB_{1}^{\beta}.$$
and using (\ref{322gye002}), it is not hard to check
\begin{align}\label{G003}
n(t)\big(\ln B_{1}\big)^{\alpha} \leq&n(t)\Big(C_{2}\frac{B_{1}^{\theta_{1}}}{A_{1}^{\theta_{2}}}+C_{3}\ln A_{1}\Big) 
 = 
C_{2}n(t)\frac{B_{1}^{\theta_{1}}}{A_{1}^{\theta_{2}}}+C_{3}n(t)\ln A_{1}
\nonumber\\
 \leq&
C_{2}KB_{1}^{\beta}\frac{B_{1}^{\theta_{1}}}{A_{1}^{\theta_{2}}}+C_{3}n(t)\ln A_{1}
 = 
C_{2}K \Big( \frac{B_{1}}{A_{1}}\Big)^{\beta+\theta_{1}} +C_{3}n(t)\ln A_{1}
\nonumber\\
 \leq&
\frac{B_{1}}{2A_{1}}+C(C_{2}, \theta_{1},\theta_{2},\alpha,\beta,K)+C_{3}n(t)\ln A_{1},
\end{align}
where we have used the following condition
$$\theta_{2}=\beta+\theta_{1}<1.$$
This along with $\theta_{2}< \gamma \theta_{1}$ implies
$$\frac{\beta}{\gamma-1}<\theta_{1}<1-\beta,$$
which leads to the restriction
$$\beta<\frac{\gamma-1}{\gamma}.$$
Therefore, we first fix $C_{2},\,C_{3}$, $\theta_{1}$ and $\theta_{2}$, then we choose
$$\sigma\geq \max\left\{\Big(\frac{2}{C_{1}}\Big)^{\frac{1}{\gamma-1}}-e,\ \sigma_{1},\ \sigma_{2} \right\},$$
where $\sigma_{1}=\sigma_{1}(C_{1},\alpha,\beta,\gamma)$ and $\sigma_{2}=\sigma_{2}(C_{1},\alpha,\beta,\gamma)>0$.
Summing up (\ref{G002}) and (\ref{G003}), we conclude
\begin{eqnarray}\label{G004}(\ln A_{1})'+\frac{B_{1}}{2A_{1}}\leq\big(m(t)+C_{3}n(t)\big)\ln A_{1}+C(C_{1},\alpha,\beta,\gamma)+l(t)+ f(t).\end{eqnarray}
For the sake of simplicity, we denote
$$X(t):=\ln A_{1}(t)+\int_{0}^{t}{\frac{B_{1}(s)}{2A_{1}(s)}\,ds},$$
then it follows from (\ref{G004}) that
$$X'(t)\leq
C(C_{1},\alpha,\beta,\gamma) +l(t)+ f(t)+\big(m(t)+C_{3}n(t)\big)X(t).$$
Whereas by using a standard Gronwall inequality, we obtain
\begin{align}
X(t) \leq& e^{\int_{0}^{t}{\big(m(s)+ C_{3} n(s)\big)\,ds}}\Big(X(0)+\int_{0}^{t}{\{(C_{1},\alpha,\beta,\gamma)
+l(s)+f(s)\}\,ds}\Big)\nonumber\\
:=&C(C_{1},l,m,n,f,\alpha,\beta,\gamma,K,t).\nonumber
\end{align}
According to the definition of $X$, we infer
$$A_{1}(t)\leq e^{X(t)}\leq e^{C(C_{1},l,m,n,f,\alpha,\beta,\gamma,K,t)}.$$
Moreover, it is also easy to see that
\begin{align}
\int_{0}^{t}{ B_{1}(s) \,ds} =&\int_{0}^{t}{2A_{1}(s)\frac{B_{1}(s)}{2A_{1}(s)}\,ds}
 \leq
\int_{0}^{t}{2\big(\max_{0\leq \tau\leq t}A_{1}(\tau)\big)\frac{B_{1}(s)}{2A_{1}(s)}\,ds}\nonumber\\
\leq& 2e^{C(C_{1},l,m,n,f,\alpha,\beta,\gamma,K,t)}\int_{0}^{t}{ \frac{B_{1}(s)}{2A_{1}(s)}\,ds}\nonumber\\
\leq& 2C(C_{1},l,m,n,f,\alpha,\beta,\gamma,K,t)
e^{C(C_{1},l,m,n,f,\alpha,\beta,\gamma,K,t)}.\nonumber
\end{align}
This concludes the proof of Lemma \ref{addLdem01}.
\end{proof}

\vskip .1in
The following anisotropic
Sobolev inequalities
will be frequently used later.
\begin{lemma}\label{gfhgj}
The following anisotropic interpolation inequalities hold true for $i=1,\,2$
\begin{eqnarray}\label{t2326001}
 \|\Lambda_{x_{i}}^{s}f\|_{L^{2}}\leq
C\|f\|_{L^{2}}^{1-\frac{s}{\delta+1}}\|\Lambda_{x_{i}}^{\delta}\partial_{x_{i}}f\|_{L^{2}}^{\frac{s}{\delta+1}},
\end{eqnarray}
where $0\leq s\leq \delta+1$. In particular, we have
\begin{eqnarray}\label{adt2326001}
\|\Lambda_{x_{i}}^{\gamma}f\|_{L^{2}}\leq C\|f\|_{L^{2}}^{1-\frac{\gamma}{\varrho}}\|\Lambda_{x_{i}}
^{\varrho}f\|_{L^{2}}^{\frac{\gamma}{\varrho}},\quad  0\leq\gamma\leq\varrho.
_{}\end{eqnarray}
\end{lemma}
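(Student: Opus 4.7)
The plan is to prove both inequalities by combining Plancherel's theorem with Hölder's inequality applied in frequency space. By symmetry it suffices to handle $i=1$, since the $i=2$ case is identical after exchanging the roles of $\xi_1$ and $\xi_2$.

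For (\ref{t2326001}), I first set $\theta:=\frac{s}{\delta+1}\in[0,1]$ (the endpoint cases $s=0$ and $s=\delta+1$ being trivial). By Plancherel's theorem,
$$\|\Lambda_{x_1}^{s}f\|_{L^2}^{2}=\int_{\mathbb{R}^2}|\xi_1|^{2s}|\hat f(\xi)|^{2}\,d\xi.$$
Using $2s=2(\delta+1)\theta$, I would then factor the integrand as
$$|\xi_1|^{2s}|\hat f(\xi)|^{2}=\bigl[|\hat f(\xi)|^{2}\bigr]^{1-\theta}\bigl[|\xi_1|^{2(\delta+1)}|\hat f(\xi)|^{2}\bigr]^{\theta}$$
and apply Hölder's inequality with the conjugate exponents $\frac{1}{1-\theta}$ and $\frac{1}{\theta}$ to obtain
$$\|\Lambda_{x_1}^{s}f\|_{L^2}^{2}\leq\Bigl(\int|\hat f|^{2}\,d\xi\Bigr)^{1-\theta}\Bigl(\int|\xi_1|^{2(\delta+1)}|\hat f|^{2}\,d\xi\Bigr)^{\theta}.$$
The key final observation is that the Fourier symbol of $\Lambda_{x_1}^{\delta}\partial_{x_1}$ is $i\xi_1|\xi_1|^{\delta}$, whose modulus equals $|\xi_1|^{\delta+1}$, so Plancherel gives $\int|\xi_1|^{2(\delta+1)}|\hat f|^{2}\,d\xi=\|\Lambda_{x_1}^{\delta}\partial_{x_1}f\|_{L^2}^{2}$. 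Taking square roots yields (\ref{t2326001}) with $C=1$.

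For (\ref{adt2326001}), the argument is even more direct: setting $\theta:=\frac{\gamma}{\varrho}\in[0,1]$, I would split
$$|\xi_1|^{2\gamma}|\hat f|^{2}=\bigl[|\hat f|^{2}\bigr]^{1-\theta}\bigl[|\xi_1|^{2\varrho}|\hat f|^{2}\bigr]^{\theta}$$
and apply Hölder with the same pair of exponents, producing
$$\|\Lambda_{x_1}^{\gamma}f\|_{L^2}\leq\|f\|_{L^2}^{1-\frac{\gamma}{\varrho}}\|\Lambda_{x_1}^{\varrho}f\|_{L^2}^{\frac{\gamma}{\varrho}}.$$
There is no real obstacle here, since both statements are interpolations within a single $L^2$-norm against powers of $|\xi_1|$; the only point that requires a brief justification is identifying the norm $\|\Lambda_{x_1}^{\delta+1}f\|_{L^2}$ that naturally emerges on the right-hand side with the $\|\Lambda_{x_1}^{\delta}\partial_{x_1}f\|_{L^2}$ appearing in the statement of (\ref{t2326001}), which is immediate from the multiplier computation above.
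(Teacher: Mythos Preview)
Your proof is correct. It differs from the paper's argument, though both are short. The paper works slice-by-slice in physical space: for each fixed $x_2$ it invokes the one-dimensional interpolation inequality $\|\Lambda_{x_1}^{s}f(\cdot,x_2)\|_{L_{x_1}^{2}}\leq C\|f(\cdot,x_2)\|_{L_{x_1}^{2}}^{1-s/(\delta+1)}\|\Lambda_{x_1}^{\delta}\partial_{x_1}f(\cdot,x_2)\|_{L_{x_1}^{2}}^{s/(\delta+1)}$, then integrates in $x_2$ and applies H\"older to separate the two factors. Your approach bypasses the slicing entirely by going straight to frequency space via Plancherel and applying H\"older to the full $\mathbb{R}^2$ integral; since every norm in the statement is an $L^2$-norm, this is the cleaner route and even yields the sharp constant $C=1$. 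The paper's slicing template, on the other hand, is the one that extends to the mixed-norm and $L^p$ settings of the neighboring lemmas, which is presumably why the author presents it that way.
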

\begin{proof}[ {Proof of Lemma \ref{gfhgj}}]
It suffices to show (\ref{t2326001}) for $i=1$ as $i=2$ can be performed as the same manner. By the interpolation inequality and the Young inequality, it is obvious to check that
\begin{align}
\|\Lambda_{x_{1}}^{s}f\|_{L^{2}}^{2}
=&\int_{\mathbb{R}}\int_{\mathbb{R}}{|\Lambda_{x_{1}}^{s}f(x_{1},\,x_{2})|^{2}\,
dx_{1}dx_{2}}\nonumber\\
=&\int_{\mathbb{R}}{ \|\Lambda_{x_{1}}^{s}f(x_{1},\,x_{2})\|_{L_{x_{1}}^{2}}^{2}\,
 dx_{2}}\nonumber\\
 \leq& C\int_{\mathbb{R}}{ \|f(x_{1},\,x_{2})\|_{L_{x_{1}}^{2}}^{2(1-\frac{s}{\delta+1})}
 \|\Lambda_{x_{1}}^{\delta}\partial_{x_{1}}f(x_{1},\,x_{2})\|_{L_{x_{1}}^{2}}^{\frac{2s}{\delta+1}}\,
 dx_{2}}
 \nonumber\\
 \leq& C\left(\int_{\mathbb{R}}{ \|f(x_{1},\,x_{2})\|_{L_{x_{1}}^{2}}^{2}\,
 dx_{2}}\right)^{1-\frac{s}{\delta+1}}
 \left(\int_{\mathbb{R}}{
 \|\Lambda_{x_{1}}^{\delta}\partial_{x_{1}}f(x_{1},\,x_{2})\|_{L_{x_{1}}^{2}}^{2}\,
 dx_{2}}\right)^{\frac{s}{\delta+1}}
 \nonumber\\
 =&
 C\|f\|_{L^{2}}^{2(1-\frac{s}{\delta+1})}
 \|\Lambda_{x_{1}}^{\delta}\partial_{x_{1}}f\|_{L^{2}}^{\frac{2s}{\delta+1}}, \nonumber
\end{align}
which is nothing but the desired result (\ref{t2326001}). Following the proof of (\ref{t2326001}), the estimate (\ref{adt2326001}) immediately holds true.
This completes the proof of the lemma.
\end{proof}

\vskip .1in
We also need the following anisotropic
Sobolev inequalities.
\begin{lemma} \label{dfdasfwqew33}
The following anisotropic interpolation inequalities hold true for $i=1,\,2$
\begin{eqnarray}\label{ghpoiu908}
 \|\partial_{x_{i}}f\|_{L^{2(\gamma+1)}}\leq
C\|f\|_{L^{\infty}}^{\frac{\gamma}{\gamma+1}}\|\Lambda_{x_{i}}^{\gamma}\partial_{x_{i}}f\|_{L^{2}}^{\frac{1}{\gamma+1}}
,\quad \gamma\geq0,
\end{eqnarray}
\begin{eqnarray}\label{ghpoiu9ddfgh}
 \|\Lambda_{x_{i}}^{\delta}f\|_{L^{\frac{2(\varrho+1)}{\delta}}}\leq
C\|f\|_{L^{\infty}}^{1-\frac{\delta}{\varrho+1}}\|\Lambda_{x_{i}}^{\varrho}
\partial_{x_{i}}f\|_{L^{2}}^{\frac{\delta}{\varrho+1}},\quad 0\leq\delta\leq\varrho+1.
\end{eqnarray}
\end{lemma}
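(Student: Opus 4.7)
The plan is to reduce both inequalities to one-dimensional fractional Gagliardo--Nirenberg estimates applied pointwise in the transverse variable, exactly in the spirit of the proof of Lemma~\ref{gfhgj}. By symmetry in the two coordinates, I shall only treat the case $i=1$; the case $i=2$ is identical with $x_{1}$ and $x_{2}$ interchanged.

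The key one-dimensional building blocks I intend to invoke are the classical fractional Gagliardo--Nirenberg inequalities on $\mathbb{R}$:
\begin{eqnarray*}
\|\partial_{y} g\|_{L^{2(\gamma+1)}(\mathbb{R})}
&\leq& C\, \|g\|_{L^{\infty}(\mathbb{R})}^{\frac{\gamma}{\gamma+1}}\,\|\Lambda_{y}^{\gamma}\partial_{y} g\|_{L^{2}(\mathbb{R})}^{\frac{1}{\gamma+1}}, \qquad \gamma\geq 0,\\
\|\Lambda_{y}^{\delta} g\|_{L^{\frac{2(\varrho+1)}{\delta}}(\mathbb{R})}
&\leq& C\, \|g\|_{L^{\infty}(\mathbb{R})}^{1-\frac{\delta}{\varrho+1}}\,\|\Lambda_{y}^{\varrho}\partial_{y} g\|_{L^{2}(\mathbb{R})}^{\frac{\delta}{\varrho+1}}, \qquad 0\leq \delta\leq \varrho+1,
\end{eqnarray*}
both of which are consistent with the natural scaling $g\mapsto g(\lambda\,\cdot)$ and can be established by the standard Littlewood--Paley / Bernstein argument (decompose $g=\sum_{j}\Delta_{j}g$, bound low frequencies by $\|g\|_{L^{\infty}}$ and high frequencies by $\|\Lambda^{\varrho}\partial g\|_{L^{2}}$, then optimize the splitting parameter).

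To obtain (\ref{ghpoiu908}), I apply the first 1D inequality to the slice $g(\cdot)=f(\cdot,x_{2})$ for each fixed $x_{2}\in\mathbb{R}$, raise to the $2(\gamma+1)$th power, and integrate in $x_{2}$:
\begin{eqnarray*}
\|\partial_{x_{1}}f\|_{L^{2(\gamma+1)}(\mathbb{R}^{2})}^{2(\gamma+1)}
&=&\int_{\mathbb{R}}\|\partial_{x_{1}}f(\cdot,x_{2})\|_{L^{2(\gamma+1)}(\mathbb{R})}^{2(\gamma+1)}\,dx_{2}\\
&\leq& C\int_{\mathbb{R}}\|f(\cdot,x_{2})\|_{L^{\infty}_{x_{1}}}^{2\gamma}\,\|\Lambda_{x_{1}}^{\gamma}\partial_{x_{1}}f(\cdot,x_{2})\|_{L^{2}_{x_{1}}}^{2}\,dx_{2}\\
&\leq& C\|f\|_{L^{\infty}(\mathbb{R}^{2})}^{2\gamma}\,\|\Lambda_{x_{1}}^{\gamma}\partial_{x_{1}}f\|_{L^{2}(\mathbb{R}^{2})}^{2},
\end{eqnarray*}
and take the $2(\gamma+1)$th root. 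The key point that makes the integral close cleanly is that the exponent of the $L^{2}$ factor in the 1D inequality is exactly $2/2(\gamma+1)$, so raising to the $2(\gamma+1)$th power produces exponent $2$, which matches the definition of $\|\Lambda_{x_{1}}^{\gamma}\partial_{x_{1}}f\|_{L^{2}(\mathbb{R}^{2})}^{2}$ after integrating in $x_{2}$.

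For (\ref{ghpoiu9ddfgh}) the same template works: apply the second 1D inequality pointwise in $x_{2}$, raise to the power $q:=2(\varrho+1)/\delta$, integrate in $x_{2}$, and observe that the exponent of the $L^{2}_{x_{1}}$ factor becomes $q\cdot\delta/(\varrho+1)=2$ (perfectly integrable) while the $L^{\infty}$ factor can be pulled out by the estimate $\|f(\cdot,x_{2})\|_{L^{\infty}_{x_{1}}}\leq\|f\|_{L^{\infty}(\mathbb{R}^{2})}$. Taking the $q$th root yields the asserted exponents $1-\delta/(\varrho+1)$ and $\delta/(\varrho+1)$. The mild obstacle is merely to record the underlying 1D fractional Gagliardo--Nirenberg inequalities; once those are available, the anisotropic versions follow by this Minkowski-type integration scheme and no further work is required.
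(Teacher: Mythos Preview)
Your proposal is correct and follows essentially the same route as the paper's own proof: reduce to $i=1$ by symmetry, invoke the corresponding one-dimensional Gagliardo--Nirenberg inequalities, apply them slice-by-slice in $x_{2}$, raise to the appropriate power so that the $L^{2}_{x_{1}}$ factor acquires exponent $2$, pull out the $L^{\infty}$ factor, integrate, and take roots. The only cosmetic difference is that you sketch how the underlying 1D inequalities are obtained (via Littlewood--Paley), whereas the paper simply recalls them.
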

\begin{proof}[ {Proof of Lemma \ref{dfdasfwqew33}}]
It is sufficient to prove (\ref{ghpoiu908}) and (\ref{ghpoiu9ddfgh}) for $i=1$.
We first recall the following one-dimensional Sobolev inequality
\begin{eqnarray}
\|\partial_{x_{1}}g\|_{L_{x_{1}}^{2(\gamma+1)}(\mathbb{R})}\leq C\|g\|_{L_{x_{1}}^{\infty}(\mathbb{R})}^{\frac{\gamma}{\gamma+1}} \|\Lambda_{x_{1}}^{\gamma}\partial_{x_{1}}g\|_{L_{x_{1}}^{2}(\mathbb{R})}^{ \frac{1}{\gamma+1}},\nonumber
\end{eqnarray}
where we have used the sub-index $x_{1}$ with the Lebesgue spaces to emphasize that the norms are taken in one-dimensional Lebesgue spaces with respect to $x_{1}$.
Thanks to the above interpolation inequality and the Young inequality, we have
\begin{align}
\|\partial_{x_{1}}f\|_{L^{2(\gamma+1)}}^{2(\gamma+1)} =&\int_{\mathbb{R}}{
\|\partial_{x_{1}}f(x_{1},\,x_{2})\|_{L^{2(\gamma+1)}}^{2(\gamma+1)}\,
dx_{2}}\nonumber\\ \leq& C\int_{\mathbb{R}}{ \|f(x_{1},\,x_{2})\|_{L_{x_{1}}^{\infty}}^{2\gamma}
 \|\Lambda_{x_{1}}^{\gamma}\partial_{x_{1}}f(x_{1},\,x_{2})\|_{L_{x_{1}}^{2}}^{2}\,
 dx_{2}}
 \nonumber\\ \leq& C\|f(x_{1},\,x_{2})\|_{L_{x_{1}x_{2}}^{\infty}}^{2\gamma}\int_{\mathbb{R}}{
 \|\Lambda_{x_{1}}^{\gamma}\partial_{x_{1}}f(x_{1},\,x_{2})\|_{L_{x_{1}}^{2}}^{2}\,
 dx_{2}}
 \nonumber\\ =&
 C\|f\|_{L^{\infty}}^{2\gamma}
 \|\Lambda_{x_{1}}^{\gamma}\partial_{x_{1}}f\|_{L^{2}}^{2}, \nonumber
\end{align}
which implies that
$$\|\partial_{x_{1}}f\|_{L^{2(\gamma+1)}}\leq
C\|f\|_{L^{\infty}}^{\frac{\gamma}{\gamma+1}}
\|\Lambda_{x_{1}}^{\gamma}\partial_{x_{1}}f\|_{L^{2}}^{\frac{1}{\gamma+1}}.$$
Similarly, using the following one-dimensional Sobolev inequality
$$\|\Lambda_{x_{1}}^{\delta}g\|_{L^{\frac{2(\varrho+1)}{\delta}}}\leq C
\|g\|_{L_{x_{1}}^{\infty}}^{\frac{\varrho+1-\delta}{\varrho+1}}
 \|\Lambda_{x_{1}}^{\varrho}\partial_{x_{1}}g\|_{L_{x_{1}}^{2}}^{\frac{\delta}{\varrho+1}},$$
one may conclude
\begin{align}
\|\Lambda_{x_{1}}^{\delta}f\|_{L^{\frac{2(\varrho+1)}{\delta}}}^{\frac{2(\varrho+1)}
{\delta}} =&\int_{\mathbb{R}}{
\|\Lambda_{x_{1}}^{\delta}f(x_{1},\,x_{2})\|_{L^{\frac{2(\varrho+1)}
{\delta}}}^{\frac{2(\varrho+1)}{\delta}}\,
dx_{2}}\nonumber\\
\leq& C\int_{\mathbb{R}}{ \|f(x_{1},\,x_{2})\|_{L_{x_{1}}^{\infty}}^{\frac{2(\varrho+1-\delta)}{\delta}}
 \|\Lambda_{x_{1}}^{\varrho}\partial_{x_{1}}f(x_{1},\,x_{2})\|_{L_{x_{1}}^{2}}^{2}\,
 dx_{2}}
 \nonumber\\ \leq& C\|f(x_{1},\,x_{2})\|_{L_{x_{1}x_{2}}^{\infty}}^{\frac{2(\varrho+1-\delta)}{\delta}}\int_{\mathbb{R}}{
 \|\Lambda_{x_{1}}^{\varrho}\partial_{x_{1}}f(x_{1},\,x_{2})\|_{L_{x_{1}}^{2}}^{2}\,
 dx_{2}}
 \nonumber\\ =&
 C\|f\|_{L^{\infty}}^{\frac{2(\varrho+1-\delta)}{\delta}}
 \|\Lambda_{x_{1}}^{\varrho}\partial_{x_{1}}f\|_{L^{2}}^{2},\nonumber
\end{align}
which leads to the following desired estimate
$$ \|\Lambda_{x_{1}}^{\delta}f\|_{L^{\frac{2(\varrho+1)}{\delta}}}\leq
C\|f\|_{L^{\infty}}^{1-\frac{\delta}{\varrho+1}}\|\Lambda_{x_{1}}^{\varrho}
\partial_{x_{1}}f\|_{L^{2}}^{\frac{\delta}{\varrho+1}}.$$
We therefore conclude the proof of Lemma \ref{dfdasfwqew33}.
\end{proof}

\vskip .1in

In order to obtain the higher regularity, we need to establish the following anisotropic Sobolev inequality.
\begin{lemma} \label{triple}
Let $f\in L_{x_{2}}^{q}L_{x_{1}}^{p}(\mathbb{R}^2)$ for $p,\,q\in[2,\,\infty]$. If $g, \ h\in L^2(\mathbb{R}^2)$, $\Lambda_{x_{1}}^{\gamma_{1}}g,\ \Lambda_{x_{2}}^{\gamma_{2}}h \in L^2(\mathbb{R}^2)$ for any $\gamma_{1}\in (\frac{1}{p},\,1]$ and $\gamma_{2}\in (\frac{1}{q},\,1]$, then it holds true
\begin{equation}
 \int_{\mathbb{R}}\int_{\mathbb{R}}| f \, g\, h|  \;dx_{1}dx_{2}  \le C \, \|f\|_{L_{x_{2}}^{q}L_{x_{1}}^{p}} \, \|g\|_{L^2}^{1-\frac{1}{\gamma_{1}p}}\|\Lambda_{x_{1}}^{\gamma_{1}}g\|_{L^2}^{ \frac{1}{\gamma_{1}p}} \,\|h\|_{L^2}^{1-\frac{1}{\gamma_{2}q}}\|\Lambda_{x_{2}}^{\gamma_{2}}h\|_{L^2}^{ \frac{1}{\gamma_{2}q}},\nonumber
\end{equation}
where here and in sequel, we use the notation
$$\|h\|_{L_{x_{2}}^{q}L_{x_{1}}^p}:=\Big( \int_{\mathbb{R}}\|h(.,x_{2})\|_{L_{x_{1}}^p}^{q} \,dx_{2}\Big)^{\frac{1}{q}}.$$
In particular, let $f,\ g, \ h\in L^2(\mathbb{R}^2)$ and $\Lambda_{x_{1}}^{\gamma_{1}}g,\ \Lambda_{x_{2}}^{\gamma_{2}}h \in L^2(\mathbb{R}^2)$ for any $\gamma_{1},\ \gamma_{2}\in (\frac{1}{2},\,1]$, then it holds true
\begin{equation} \label{qtri}
 \int_{\mathbb{R}}\int_{\mathbb{R}}| f \, g\, h|  \;dx_{1}dx_{2}  \le C \, \|f\|_{L^2} \, \|g\|_{L^2}^{1-\frac{1}{2\gamma_{1}}}\|\Lambda_{x_{1}}^{\gamma_{1}}g\|_{L^2}^{ \frac{1}{2\gamma_{1}}} \,\|h\|_{L^2}^{1-\frac{1}{2\gamma_{2}}}\|\Lambda_{x_{2}}^{\gamma_{2}}h\|_{L^2}^{ \frac{1}{2\gamma_{2}}},
\end{equation}
where $C$ is a constant depending on $\gamma_{1}$ and $\gamma_{2}$ only.
\end{lemma}
\begin{proof}[{Proof of Lemma \ref{triple}}]
The proof of this lemma can be found in \cite{WXY17}. For the convenience of the reader, we provide the details.
Now we recall the one-dimensional Sobolev inequality
\begin{eqnarray}\label{t304}
\|g\|_{L_{x_{1}}^{\frac{2p}{p-2}}(\mathbb{R})}\leq C\|g\|_{L_{x_{1}}^{2}(\mathbb{R})}^{1-\frac{1}{\gamma_{1}p}} \|\Lambda_{x_{1}}^{\gamma_{1}}g\|_{L_{x_{1}}^{2}(\mathbb{R})}^{ \frac{1}{\gamma_{1}p}},\qquad \gamma_{1}\in \Big(\frac{1}{p},\,1\Big],
\end{eqnarray}
where here and in what follows, we adopt the convention $\frac{2p}{p-2}=\infty$ for $p=2$.
By means of (\ref{t304}) and the H$\rm\ddot{o}$lder inequality, one deduces
\begin{align}\label{tr001}
  \int_{\mathbb{R}}\int_{\mathbb{R}}| f \, g\, h|  \;dx_{1}dx_{2} \leq& C \, \int_{\mathbb{R}}\|f\|_{L_{x_{1}}^p} \, \|g\|_{L_{x_{1}}^{\frac{2p}{p-2}}}\,\|h\|_{L_{x_{1}}^2} \,dx_{2}\nonumber\\ \leq&C \, \int_{\mathbb{R}}\|f\|_{L_{x_{1}}^p} \, \|g\|_{L_{x_{1}}^2}^{1-\frac{1}{\gamma_{1}p}}\|\Lambda_{x_{1}}^{\gamma_{1}}g\|_{L_{x_{1}}^2}^{ \frac{1}{\gamma_{1}p}} \,\|h\|_{L_{x_{1}}^2} \,dx_{2}\nonumber\\ \leq& C\Big( \int_{\mathbb{R}}\|f\|_{L_{x_{1}}^p}^{q} \,dx_{2}\Big)^{\frac{1}{q}} \Big( \int_{\mathbb{R}}\|g\|_{L_{x_{1}}^2}^{2} \,dx_{2}\Big)^{\frac{\gamma_{1}p-1}{2\gamma_{1}p}}\nonumber\\& \times
 \Big( \int_{\mathbb{R}}\|\Lambda_{x_{1}}^{\gamma_{1}}g\|_{L_{x_{1}}^2}^{2} \,dx_{2}\Big)^{\frac{1}{2\gamma_{1}p}}\|h\|_{L_{x_{2}}^{\frac{2q}{q-2}}L_{x_{1}}^2}
 \nonumber\\ =& C\|f\|_{L_{x_{2}}^{q}L_{x_{1}}^{p}} \, \|g\|_{L^2}^{1-\frac{1}{\gamma_{1}p}}\|\Lambda_{x_{1}}^{\gamma_{1}}g\|_{L^2}^{ \frac{1}{\gamma_{1}p}}  \,\|h\|_{L_{x_{2}}^{\frac{2q}{q-2}}L_{x_{1}}^2}.
\end{align}
According to the Minkowski inequality and (\ref{t304}), we have
\begin{align}\label{tr002}
\|h\|_{L_{x_{2}}^{\frac{2q}{q-2}}L_{x_{1}}^2} \leq& C\Big(\int_{\mathbb{R}}\|h(x_{1},\,x_{2})\|_{L_{x_{2}}^{\frac{2q}{q-2}}}^{2}
\,dx_{1}\Big)^{\frac{1}{2}}
\nonumber\\ \leq& C\Big(\int_{\mathbb{R}}\|h(x_{1},\,x_{2})\|_{L_{x_{2}}^2}^{2-\frac{2}{\gamma_{2}q}}
\|\Lambda_{x_{2}}^{\gamma_{2}}h(x_{1},\,x_{2})\|_{L_{x_{2}}^2}^{ \frac{2}{\gamma_{2}q}}\,dx_{1}\Big)^{\frac{1}{2}}\nonumber\\ \leq& C\Big(\int_{\mathbb{R}}\|h(x_{1},\,x_{2})\|_{L_{x_{2}}^2}^{2}\,dx_{1}\Big)
^{\frac{\gamma_{2}q-1}{2\gamma_{2}q}} \Big(\int_{\mathbb{R}}
\|\Lambda_{x_{2}}^{\gamma_{2}}h(x_{1},\,x_{2})\|_{L_{x_{2}}^2}^{2}\,dx_{1}
\Big)^{\frac{1}
{2\gamma_{2}q}}
\nonumber\\ =&C\|h\|_{L^2}^{1-\frac{1}{\gamma_{2}q}}\|\Lambda_{x_{2}}^{\gamma_{2}}h\|_{L^2}^{ \frac{1}{\gamma_{2}q}}.
\end{align}
Inserting (\ref{tr002}) into (\ref{tr001}) gives
$$\int_{\mathbb{R}}\int_{\mathbb{R}}| f \, g\, h|  \;dx_{1}dx_{2}  \le C \, \|f\|_{L_{x_{2}}^{q}L_{x_{1}}^{p}} \, \|g\|_{L^2}^{1-\frac{1}{\gamma_{1}p}}\|\Lambda_{x_{1}}^{\gamma_{1}}g\|_{L^2}^{ \frac{1}{\gamma_{1}p}} \,\|h\|_{L^2}^{1-\frac{1}{\gamma_{2}q}}\|\Lambda_{x_{2}}^{\gamma_{2}}h\|_{L^2}^{ \frac{1}{\gamma_{2}q}},$$
which is the desired inequality (\ref{qtri}). This completes the proof of the lemma.
\end{proof}

\vskip .1in

Finally, the following standard commutator estimate will also be used as well, which can be found in \cite[p.614]{KPV2}.
\begin{lemma}
Let $s\in(0,1)$ and $p\in (1, \infty)$. Then
\begin{eqnarray}\label{t2326003}
\|\Lambda^s (f\,g) - g\,\Lambda^s f - f\, \Lambda^s g\|_{L^p(\mathbb{R}^d)}
\le C\, \|g\|_{L^\infty(\mathbb{R}^d)}\, \|\Lambda^s f\|_{L^p(\mathbb{R}^d)},
\end{eqnarray}
where $d\ge 1$ denotes the spatial dimension and $C=C(d,s,p)$ is a constant.
In particular, it holds true
$$\|\Lambda^s (f\,g) - f\, \Lambda^s g\|_{L^p(\mathbb{R}^d)}
\le C\, \|g\|_{L^\infty(\mathbb{R}^d)}\, \|\Lambda^s f\|_{L^p(\mathbb{R}^d)}.$$
\end{lemma}

\vskip .2in
\section{The proof of Theorem \ref{ThSQG}}
\setcounter{equation}{0}
The existence and uniqueness of local smooth solutions can be established via a standard procedure (see Appendix \ref{appd12} for details). Thus, in order to complete the proof of Theorem \ref{ThSQG}, it is sufficient to establish {\it a priori} estimates that hold for any fixed $T>0$. The following proposition states the basic bounds.
\begin{Pros}\label{SDFsda2789}
Assume $\theta_{0}$ satisfies the assumptions stated in Theorem \ref{ThSQG} and let $(u, \theta)$ be the corresponding solution. Then, for any $t>0$,
$$\|\theta(t)\|_{L^{2}}^{2}+2\int_{0}^{t}{(\|\Lambda_{x_{1}}^{\alpha}\theta(
\tau)\|_{L^{2}}^{2}+\|\Lambda_{x_{2}}^{\beta}\theta(
\tau)\|_{L^{2}}^{2})\,d\tau}\leq \|\theta_{0}\|_{L^{2}}^{2},$$
$$
\|\theta(t)\|_{L^p}\leq \|\theta_0\|_{L^p}, \qquad 2\le p\le \infty.
$$
\end{Pros}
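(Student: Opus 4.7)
The plan is to prove both bounds by direct energy-type arguments, exploiting the divergence-free structure of $u$ to kill the transport term and a slice-wise Córdoba--Córdoba inequality to handle the two anisotropic fractional dissipations. Throughout, I would treat $\mu=\nu=1$ for notational simplicity (the general case is identical, with the constants appearing as weights on the dissipation integrals).

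For the $L^2$ identity, I would take the $L^2$ inner product of equation \eqref{SQG} with $2\theta$ and integrate over $\mathbb{R}^2$. Since $\nabla\cdot u=0$, the convective term vanishes via
\begin{equation*}
\int_{\mathbb{R}^2} (u\cdot\nabla\theta)\,\theta\,dx \;=\; \tfrac{1}{2}\int_{\mathbb{R}^2} u\cdot\nabla(\theta^{2})\,dx \;=\; -\tfrac{1}{2}\int_{\mathbb{R}^2}(\nabla\cdot u)\,\theta^{2}\,dx \;=\; 0.
\end{equation*}
The two dissipative terms are then handled by Plancherel in the appropriate single variable: for instance $\int \Lambda_{x_1}^{2\alpha}\theta\cdot\theta\,dx = \int |\xi_{1}|^{2\alpha}|\hat\theta(\xi)|^{2}d\xi = \|\Lambda_{x_1}^{\alpha}\theta\|_{L^{2}}^{2}$, and similarly for the $x_{2}$-direction. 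Integrating in time yields the stated energy identity as an equality, hence the inequality.

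For the $L^{p}$ bound with $2\le p<\infty$, I would multiply \eqref{SQG} by $|\theta|^{p-2}\theta$ and integrate. Divergence-freeness again kills the transport term since $\int u\cdot\nabla\theta\cdot|\theta|^{p-2}\theta\,dx = \tfrac{1}{p}\int u\cdot\nabla(|\theta|^{p})\,dx=0$. The nontrivial point is positivity of the anisotropic dissipation terms against the test function. For this I would invoke the Córdoba--Córdoba pointwise inequality in one dimension, applied slice-wise: for a.e.\ fixed $x_{2}\in\mathbb{R}$,
\begin{equation*}
\int_{\mathbb{R}} \Lambda_{x_1}^{2\alpha}\theta(x_{1},x_{2})\,|\theta(x_{1},x_{2})|^{p-2}\theta(x_{1},x_{2})\,dx_{1} \;\ge\; \frac{2}{p}\int_{\mathbb{R}}\bigl|\Lambda_{x_{1}}^{\alpha}|\theta|^{p/2}(x_{1},x_{2})\bigr|^{2}dx_{1} \;\ge\; 0,
\end{equation*}
and symmetrically for $\Lambda_{x_{2}}^{2\beta}$ after fixing $x_{1}$. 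Integrating in the remaining variable gives the nonnegativity of both dissipation contributions, so $\frac{d}{dt}\|\theta\|_{L^{p}}^{p}\le 0$ and hence $\|\theta(t)\|_{L^{p}}\le\|\theta_{0}\|_{L^{p}}$.

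The $L^{\infty}$ bound is then obtained by sending $p\to\infty$ in the $L^{p}$ estimate, using $\|\theta_{0}\|_{L^{p}}\to\|\theta_{0}\|_{L^{\infty}}$ as $p\to\infty$ for $\theta_{0}\in L^{2}\cap L^{\infty}$ (valid here since $\theta_{0}\in H^{s}$ with $s\ge 2$ embeds into $L^{2}\cap L^{\infty}$). The only step that is not pure bookkeeping is the slice-wise positivity step above; it is the natural anisotropic version of the standard Córdoba--Córdoba lemma and rests on the singular-integral representation of the one-dimensional fractional Laplacian together with pointwise convexity of $r\mapsto|r|^{p}$. Once that is in hand, everything else is standard energy argumentation.
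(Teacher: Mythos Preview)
Your proof is correct and follows essentially the same approach as the paper: multiply by $\theta$ for the $L^{2}$ estimate and by $|\theta|^{p-2}\theta$ for the $L^{p}$ estimate, kill the transport term via $\nabla\cdot u=0$, and use the slice-wise C\'ordoba--C\'ordoba lower bound to ensure nonnegativity of the anisotropic dissipation terms. The paper is slightly less explicit about the passage $p\to\infty$ for the $L^\infty$ bound, but your treatment is the standard one and matches the paper in substance.
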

\begin{proof}[{Proof of Proposition \ref{SDFsda2789}}]
Multiplying the first equation of $(\ref{SQG})$ by $\theta$, using the divergence-free condition and integrating with respect to the space variable, we have
 $$\frac{1}{2}\frac{d}{dt}\|\theta(t)\|_{L^{2}}^{2}+
 \|\Lambda_{x_{1}}^{\alpha}\theta\|_{L^{2}}^{2}+
 \|\Lambda_{x_{2}}^{\beta}\theta\|_{L^{2}}^{2}= 0.$$
Integrating with respect to time yields
\begin{eqnarray}
\|\theta(t)\|_{L^{2}}^{2}+2\int_{0}^{t}{(\|\Lambda_{x_{1}}^{\alpha}\theta(
\tau)\|_{L^{2}}^{2}+\|\Lambda_{x_{2}}^{\beta}\theta(
\tau)\|_{L^{2}}^{2})\,d\tau}\leq \|\theta_{0}\|_{L^{2}}^{2}.\nonumber
\end{eqnarray}
We multiply the first equation of $(\ref{SQG})$ by $|\theta|^{p-2}\theta$ and use the divergence-free condition to derive
$$\frac{1}{p}\frac{d}{dt}\|\theta(t)\|_{L^{p}}^{p}+
\int_{\mathbb{R}^{2}}{\Lambda_{x_{1}}^{2\alpha}\theta(|\theta|^{p-2}\theta)\,dx}+
\int_{\mathbb{R}^{2}}{\Lambda_{x_{2}}^{2\beta}\theta(|\theta|^{p-2}\theta)\,dx}=0.$$
Invoking the lower bounds
\begin{align}
\int_{\mathbb{R}^{2}}{\Lambda_{x_{1}}^{2\alpha}\theta(|\theta|^{p-2}\theta)
\,dx} =&\int_{\mathbb{R}}\int_{\mathbb{R}}{\Lambda_{x_{1}}^{2\alpha}
\theta(x_{1},x_{2})
(|\theta(x_{1},x_{2})|^{p-2}\theta(x_{1},x_{2}))
\,dx_{1}dx_{2}}\nonumber\\ \geq&C\int_{\mathbb{R}}\int_{\mathbb{R}}
{\big(\Lambda_{x_{1}}^{\alpha}
|\theta(x_{1},x_{2})|^{\frac{p}{2}}\big)^{2}
\,dx_{1}dx_{2}}\nonumber
\end{align}
and
\begin{align}
\int_{\mathbb{R}^{2}}{\Lambda_{x_{2}}^{2\beta}\theta(|\theta|^{p-2}\theta)
\,dx} =&\int_{\mathbb{R}}\int_{\mathbb{R}}{\Lambda_{x_{2}}^{2\beta}\theta(x_{1},x_{2})
(|\theta(x_{1},x_{2})|^{p-2}\theta(x_{1},x_{2}))
\,dx_{1}dx_{2}}\nonumber\\ \geq&C\int_{\mathbb{R}}\int_{\mathbb{R}}
{\big(\Lambda_{x_{2}}^{\beta}
|\theta(x_{1},x_{2})|^{\frac{p}{2}}\big)^{2}
\,dx_{1}dx_{2}},\nonumber
\end{align}
it follows that
$$
\|\theta(t)\|_{L^p}\leq \|\theta_0\|_{L^p}, \qquad 2\le p\le \infty.
$$
This ends the proof of the proposition.
\end{proof}
\vskip .1in
We now prove the following global
$H^1$-bound for $\beta>\frac{1}{2\alpha+1}$ and $\beta\geq\alpha$.

\begin{Pros}\label{Lpp302}
Assume $\theta_{0}$ satisfies the assumptions stated in Theorem \ref{ThSQG} and let $(u, \theta)$ be the corresponding solution. If $\alpha$ and $\beta$ satisfy
$$\beta> \frac{1}{2\alpha+1}\ \ \mbox{and}\ \ \beta\geq\alpha,$$
then, for any $t>0$,
\begin{eqnarray}\label{asddfgdfht302}
\|\nabla\theta(t)\|_{L^{2}}^{2}+ \int_{0}^{t}{(\|\Lambda_{x_{1}}^{\alpha}\nabla\theta(
\tau)\|_{L^{2}}^{2}+\|\Lambda_{x_{2}}^{\beta}\nabla\theta(
\tau)\|_{L^{2}}^{2})\,d\tau}\leq
C(t,\,\theta_{0}),
\end{eqnarray}
where $C(t, \,\theta_{0})$ is a constant depending on $t$ and the initial data $\theta_0$.
\end{Pros}

\begin{proof}[{Proof of Proposition \ref{Lpp302}}]
Taking the inner product of $(\ref{SQG})$ with $\Delta \theta$ and using the divergence-free condition $\partial_{x_{1}} u_{1}+\partial_{x_{2}} u_{2}=0$, we infer that
\begin{align}\label{t3326t002}
\frac{1}{2}\frac{d}{dt}\|\nabla \theta(t)\|_{L^{2}}^{2}+
 \|\Lambda_{x_{1}}^{\alpha}\nabla\theta\|_{L^{2}}^{2}+
 \|\Lambda_{x_{2}}^{\beta}\nabla\theta\|_{L^{2}}^{2} =&\int_{\mathbb{R}^{2}}{(u \cdot \nabla)\theta\Delta\theta\,dx}\nonumber\\
 =& \mathcal{H}_{1}+\mathcal{H}_{2}+\mathcal{H}_{3}+\mathcal{H}_{4},
\end{align}
where
$$
\mathcal{H}_{1}=-\int_{\mathbb{R}^{2}}{\partial_{x_{1}}u_{1} \partial_{x_{1}}\theta\partial_{x_{1}}\theta\,dx},\quad \mathcal{H}_{2}=-\int_{\mathbb{R}^{2}}{\partial_{x_{1}}u_{2} \partial_{x_{2}}\theta\partial_{x_{1}}\theta\,dx},
$$
$$
\mathcal{H}_{3}=-\int_{\mathbb{R}^{2}}{\partial_{x_{2}}u_{1} \partial_{x_{1}}\theta\partial_{x_{2}}\theta\,dx},\quad \mathcal{H}_{4}=-\int_{\mathbb{R}^{2}}{\partial_{x_{2}}u_{2} \partial_{x_{2}}\theta\partial_{x_{2}}\theta\,dx}.
$$
In what follows, we shall estimate the terms at the right hand side of (\ref{t3326t002}) one by one.
To estimate the first term, we use $\partial_{x_{1}} u_{1}+\partial_{x_{2}} u_{2}=0$ and the commutator (\ref{t2326003}) to conclude
\begin{align}\label{t3326t003}
\mathcal{H}_{1} =&\int_{\mathbb{R}^{2}}{\partial_{x_{2}}u_{2} \partial_{x_{1}}\theta\partial_{x_{1}}\theta\,dx}\nonumber\\
 =&-2\int_{\mathbb{R}^{2}}{u_{2} \partial_{x_{1}}\theta \partial_{x_{2}x_{1}}\theta\,dx}
\nonumber\\
 =&-2\int_{\mathbb{R}^{2}}{\Lambda_{x_{2}}^{1-\delta}(u_{2} \partial_{x_{1}}\theta) \Lambda_{x_{2}}^{\delta}\Lambda_{x_{2}}^{-1}\partial_{x_{2}}\partial_{x_{1}}\theta\,dx}
\nonumber\\
 \leq&C\|\Lambda_{x_{2}}^{\delta}\partial_{x_{1}}\theta\|_{L^{2}} \|\Lambda_{x_{2}}^{1-\delta}(u_{2} \partial_{x_{1}}\theta)\|_{L^{2}}
\nonumber\\
 \leq&C\|\Lambda_{x_{2}}^{\delta}\partial_{x_{1}}\theta\|_{L^{2}}(\|\Lambda_{x_{2}}^{1-\delta}(u_{2} \partial_{x_{1}}\theta)-\Lambda_{x_{2}}^{1-\delta}u_{2} \partial_{x_{1}}\theta\|_{L^{2}}+\|\Lambda_{x_{2}}^{1-\delta}u_{2} \partial_{x_{1}}\theta\|_{L^{2}})
\nonumber\\
 \leq&C\|\Lambda_{x_{2}}^{\delta}\partial_{x_{1}}\theta\|_{L^{2}}( \|u_{2}\|_{L^{\infty}} \|\Lambda_{x_{2}}^{1-\delta}\partial_{x_{1}}\theta\|_{L^{2}}+ \|\Lambda_{x_{2}}^{1-\delta}u_{2} \partial_{x_{1}}\theta\|_{L^{2}})\nonumber\\
:=&\mathcal{H}_{11}+\mathcal{H}_{12},
\end{align}
where $\mathcal{H}_{11}$ and $\mathcal{H}_{12}$ are given by
$$\mathcal{H}_{11}=C\|\Lambda_{x_{2}}^{\delta}\partial_{x_{1}}\theta\|_{L^{2}} \|u_{2}\|_{L^{\infty}} \|\Lambda_{x_{2}}^{1-\delta}\partial_{x_{1}}\theta\|_{L^{2}}, \ \ \ \ \mathcal{H}_{12}=C\|\Lambda_{x_{2}}^{\delta}\partial_{x_{1}}\theta\|_{L^{2}} \|\Lambda_{x_{2}}^{1-\delta}u_{2} \partial_{x_{1}}\theta\|_{L^{2}}.$$
In light of the interpolation inequality (\ref{adt2326001}), one obtains for $1-\beta\leq\delta<\beta$
\begin{align}
\mathcal{H}_{11} \leq&C\|\partial_{x_{1}}\theta\|_{L^{2}}^{1-\frac{\delta}{\beta}} \|\Lambda_{x_{2}}^{\beta}\partial_{x_{1}}\theta\|_{L^{2}}^{\frac{\delta}{\beta}}  \|u_{2}\|_{L^{\infty}} \|\partial_{x_{1}}\theta\|_{L^{2}}^{1-\frac{1-\delta}{\beta}}
\|\Lambda_{x_{2}}^{\beta}\partial_{x_{1}}\theta\|_{L^{2}}^{\frac{1-\delta}{\beta}}
\nonumber\\
 \leq&C\|\Lambda_{x_{2}}^{\beta}\nabla\theta\|_{L^{2}}^{\frac{1}{\beta}} \|u_{2}\|_{L^{\infty}} \|\nabla\theta\|_{L^{2}}^{2-\frac{1}{\beta}}
\nonumber\\
 \leq&\epsilon\|\Lambda_{x_{2}}^{\beta}\nabla\theta\|_{L^{2}}^{2}+C(\epsilon)
\|u_{2}\|_{L^{\infty}}^{\frac{2\beta}{2\beta-1}} \|\nabla\theta\|_{L^{2}}^{2}.\nonumber
\end{align}
Now we further choose $\delta$ satisfying
$$\frac{1-\delta}{1+\beta}+\frac{1}{\alpha+1}=1\ \ \mbox{or}\ \ \delta=\frac{1-\alpha\beta}{1+\alpha},$$
then we deduce from the interpolation inequality (see Lemma \ref{dfdasfwqew33}) that
\begin{align}
\mathcal{H}_{12} \leq&C\|\partial_{x_{1}}\theta\|_{L^{2}}^{1-\frac{\delta}{\beta}} \|\Lambda_{x_{2}}^{\beta}\partial_{x_{1}}\theta\|_{L^{2}}^{\frac{\delta}{\beta}}
\|\Lambda_{x_{2}}^{1-\delta}u_{2}\|_{L^{\frac{2(1+\beta)}{1-\delta}}}
\|\partial_{x_{1}}\theta\|_{L^{2(\alpha+1)}}
\nonumber\\
 \leq&C\|\partial_{x_{1}}\theta\|_{L^{2}}^{1-\frac{\delta}{\beta}} \|\Lambda_{x_{2}}^{\beta}\partial_{x_{1}}\theta\|_{L^{2}}^{\frac{\delta}{\beta}}
\|u_{2}\|_{L^{\infty}}^{1-\frac{1-\delta}{1+\beta}}
\|\Lambda_{x_{2}}^{\beta}\partial_{x_{2}}u_{2}\|_{L^{2}}^{\frac{1-\delta}{1+\beta}}
\|\theta\|_{L^{\infty}}^{1-\frac{1}{\alpha+1}}\|\Lambda_{x_{1}}^{\alpha}\partial_{x_{1}}\theta\|_{L^{2}}
^{\frac{1}{\alpha+1}}
\nonumber\\
 \leq&C\|\nabla\theta\|_{L^{2}}^{1-\frac{\delta}{\beta}} \|\Lambda_{x_{2}}^{\beta}\nabla\theta\|_{L^{2}}^{\frac{\delta}{\beta}}
\| u_{2}\|_{L^{\infty}}^{1-\frac{1-\delta}{1+\beta}}
\|\Lambda_{x_{2}}^{\beta}\nabla \theta\|_{L^{2}}^{\frac{1-\delta}{1+\beta}}
\|\theta\|_{L^{\infty}}^{1-\frac{1}{\alpha+1}}\|\Lambda_{x_{1}}^{\alpha}\nabla\theta\|_{L^{2}}
^{\frac{1}{\alpha+1}}
\nonumber\\
 \leq&\epsilon\|\Lambda_{x_{1}}^{\alpha}\nabla\theta\|_{L^{2}}^{2}+\epsilon
\|\Lambda_{x_{2}}^{\beta}\nabla\theta\|_{L^{2}}^{2}+C(\epsilon)
\Big(\|u_{2}\|_{L^{\infty}}^{1-\frac{1-\delta}{1+\beta}}\|\theta\|_{L^{\infty}}
^{1-\frac{1}{\alpha+1}}\Big)^{\frac{2\beta}{\beta-\delta}}
\|\nabla\theta\|_{L^{2}}^{2}
\nonumber\\
 \leq&\epsilon\|\Lambda_{x_{1}}^{\alpha}\nabla\theta\|_{L^{2}}^{2}+\epsilon
\|\Lambda_{x_{2}}^{\beta}\nabla\theta\|_{L^{2}}^{2}+C(\epsilon)
 \|u_{2}\|_{L^{\infty}}^{\frac{2\beta}{(2\alpha+1)\beta-1}}
\|\nabla\theta\|_{L^{2}}^{2}.\nonumber
\end{align}
As a result, the above estimates $\mathcal{H}_{11}$ and $\mathcal{H}_{12}$ would work as long as $\delta$ satisfies
$$1-\beta\leq1-\frac{\alpha(\beta+1)}{\alpha+1}<\beta.$$
The above constraint is in particular satisfied
$$\beta> \frac{1}{2\alpha+1}\ \ \mbox{and}\ \ \beta\geq\alpha.$$
A simple computation shows that
$$\max\Big\{\alpha,\ \frac{1}{2\alpha+1}\Big\}\geq \frac{1}{2}.$$
Substituting the above estimates into (\ref{t3326t003}) yields
\begin{eqnarray}\label{t3326t004}
\mathcal{H}_{1}
 \leq \epsilon\|\Lambda_{x_{1}}^{\alpha}\nabla\theta\|_{L^{2}}^{2}+2\epsilon
\|\Lambda_{x_{2}}^{\beta}\nabla\theta\|_{L^{2}}^{2}+C(\epsilon)
\Big(\|u_{2}\|_{L^{\infty}}^{\frac{2\beta}{2\beta-1}}+\|u_{2} \|_{L^{\infty}}^{\frac{2\beta}{(2\alpha+1)\beta-1}}\Big)
\|\nabla\theta\|_{L^{2}}^{2}.
\end{eqnarray}
Similarly, arguing as the estimates of $\mathcal{H}_{11}$ and $\mathcal{H}_{12}$, we thus have
\begin{align}\label{t3326t005}
\mathcal{H}_{2} =&\int_{\mathbb{R}^{2}}{ \theta\partial_{x_{2}x_{1}}u_{2}\partial_{x_{1}}\theta\,dx}
+\int_{\mathbb{R}^{2}}{ \theta\partial_{x_{1}}u_{2}\partial_{x_{2}x_{1}}\theta\,dx}\nonumber\\
 \leq&C\|\Lambda_{x_{2}}^{\delta}\partial_{x_{1}}u_{2}\|_{L^{2}} \|\Lambda_{x_{2}}^{1-\delta}(\theta \partial_{x_{1}}\theta)\|_{L^{2}}+
C\|\Lambda_{x_{2}}^{\delta}\partial_{x_{1}}\theta\|_{L^{2}} \|\Lambda_{x_{2}}^{1-\delta}(\theta \partial_{x_{1}}u_{2})\|_{L^{2}}
\nonumber\\ \leq&C\|\Lambda_{x_{2}}^{\delta}\partial_{x_{1}}\mathcal {R}_{1}\theta\|_{L^{2}}( \|\theta\|_{L^{\infty}} \|\Lambda_{x_{2}}^{1-\delta}\partial_{x_{1}}\theta\|_{L^{2}}+ \|\Lambda_{x_{2}}^{1-\delta}\theta \partial_{x_{1}}\theta\|_{L^{2}})\nonumber\\&
+C\|\Lambda_{x_{2}}^{\delta}\partial_{x_{1}} \theta\|_{L^{2}}( \|\theta\|_{L^{\infty}} \|\Lambda_{x_{2}}^{1-\delta}\partial_{x_{1}}u_{2}\|_{L^{2}}+ \|\Lambda_{x_{2}}^{1-\delta}\theta \partial_{x_{1}}u_{2}\|_{L^{2}})\nonumber\\
 \leq&\epsilon\|\Lambda_{x_{2}}^{\beta}\nabla\theta\|_{L^{2}}^{2}+C(\epsilon)
\|\theta\|_{L^{\infty}}^{\frac{2\beta}{2\beta-1}} \|\nabla\theta\|_{L^{2}}^{2}+C(\epsilon)
\|\theta\|_{L^{\infty}}^{\frac{2\beta}{\beta-\delta}}
\|\nabla\theta\|_{L^{2}}^{2}\nonumber\\ &+C(\epsilon)
\Big(\|\theta \|_{L^{\infty}}^{1-\frac{1-\delta}{1+\beta}}\|u_{2}\|_{L^{\infty}}
^{1-\frac{1}{\alpha+1}}\Big)^{\frac{2\beta}{\beta-\delta}}
\|\nabla\theta\|_{L^{2}}^{2}
\nonumber\\
 \leq&\epsilon\|\Lambda_{x_{2}}^{\beta}\nabla\theta\|_{L^{2}}^{2}+C(\epsilon)
  \|\nabla\theta\|_{L^{2}}^{2} +C(\epsilon)
 \|u_{2}\|_{L^{\infty}} ^{\frac{2\alpha\beta}{(2\alpha+1)\beta-1}}
\|\nabla\theta\|_{L^{2}}^{2}.
\end{align}
For the term $\mathcal{H}_{3}$, one directly obtains
\begin{align}\label{t3326t006}
\mathcal{H}_{3} =&\int_{\mathbb{R}^{2}}{ u_{1} \partial_{x_{2}x_{1}}\theta\partial_{x_{2}}\theta\,dx}
+\int_{\mathbb{R}^{2}}{ u_{1} \partial_{x_{1}}\theta\partial_{x_{2}x_{2}}\theta\,dx}
\nonumber\\
 \leq&C\|\Lambda_{x_{2}}^{1-\beta}\partial_{x_{1}}\theta\|_{L^{2}} \|\Lambda_{x_{2}}^{\beta}(u_{1} \partial_{x_{2}}\theta)\|_{L^{2}}+
C\|\Lambda_{x_{2}}^{\delta}\partial_{x_{2}}\theta\|_{L^{2}} \|\Lambda_{x_{2}}^{1-\delta}(u_{1} \partial_{x_{1}}\theta)\|_{L^{2}}\nonumber\\
 :=&\mathcal{H}_{31}+\mathcal{H}_{32}.
\end{align}
Applying the same manner dealing with $\mathcal{H}_{11}$ and $\mathcal{H}_{12}$, we immediately get
\begin{align}\label{fg467uilo1}
\mathcal{H}_{32} \leq&C\|\Lambda_{x_{2}}^{\delta}\partial_{x_{2}}\theta\|_{L^{2}} \|\Lambda_{x_{2}}^{1-\delta}(u_{1} \partial_{x_{1}}\theta)\|_{L^{2}}
\nonumber\\
 \leq&C\|\Lambda_{x_{2}}^{\delta}\partial_{x_{2}}\theta\|_{L^{2}}( \|u_{1}\|_{L^{\infty}} \|\Lambda_{x_{2}}^{1-\delta}\partial_{x_{1}}\theta\|_{L^{2}}+ \|\Lambda_{x_{2}}^{1-\delta}u_{1} \partial_{x_{1}}\theta\|_{L^{2}})
\nonumber\\
 \leq&
\epsilon\|\Lambda_{x_{1}}^{\alpha}\nabla\theta\|_{L^{2}}^{2}+2\epsilon
\|\Lambda_{x_{2}}^{\beta}\nabla\theta\|_{L^{2}}^{2}+C(\epsilon)
\Big(\|u_{1}\|_{L^{\infty}}^{\frac{2\beta}{2\beta-1}}+\|u_{2} \|_{L^{\infty}}^{\frac{2\beta}{(2\alpha+1)\beta-1}}\Big)
\|\nabla\theta\|_{L^{2}}^{2}.
\end{align}
For $\beta>\frac{1}{2}$, it follows from  the interpolation inequalities (see Lemma \ref{dfdasfwqew33} and  Lemma \ref{gfhgj}) and the commutator (\ref{t2326003}) that
\begin{align}\label{fg467uilo}
\mathcal{H}_{31} \leq&C\|\Lambda_{x_{2}}^{1-\beta}\partial_{x_{1}}\theta\|_{L^{2}} ( \|u_{1} \|_{L^{\infty}}\|\Lambda_{x_{2}}^{\beta} \partial_{x_{2}}\theta\|_{L^{2}}+\|\Lambda_{x_{2}}^{\beta}u_{1} \partial_{x_{2}}\theta\|_{L^{2}})\nonumber\\
 \leq&C\|\partial_{x_{1}}\theta\|_{L^{2}}^{1-\frac{1-\beta}{\beta}}
 \|\Lambda_{x_{2}}^{\beta}\partial_{x_{1}}\theta\|_{L^{2}}^{\frac{1-\beta}{\beta}}
  \|u_{1} \|_{L^{\infty}}  \|\Lambda_{x_{2}}^{\beta} \partial_{x_{2}}\theta\|_{L^{2}}
\nonumber\\&
 +C\|\partial_{x_{1}}\theta\|_{L^{2}}^{1-\frac{1-\beta}{\beta}}
 \|\Lambda_{x_{2}}^{\beta}\partial_{x_{1}}\theta\|_{L^{2}}^{\frac{1-\beta}{\beta}}
 \|\Lambda_{x_{2}}^{\beta}u_{1}\|_{L^{\frac{2(\beta+1)}{\beta}}}
 \|\partial_{x_{2}}\theta\|_{L^{2(\beta+1)}}
 \nonumber\\
 \leq&C\|\partial_{x_{1}}\theta\|_{L^{2}}^{1-\frac{1-\beta}{\beta}}
 \|\Lambda_{x_{2}}^{\beta}\partial_{x_{1}}\theta\|_{L^{2}}^{\frac{1-\beta}{\beta}}
  \|u_{1} \|_{L^{\infty}}  \|\Lambda_{x_{2}}^{\beta} \partial_{x_{2}}\theta\|_{L^{2}}
 \nonumber\\&
 +C\|\partial_{x_{1}}\theta\|_{L^{2}}^{1-\frac{1-\beta}{\beta}}
 \|\Lambda_{x_{2}}^{\beta}\partial_{x_{1}}\theta\|_{L^{2}}^{\frac{1-\beta}{\beta}}
 \|u_{1}\|_{L^{\infty}}^{1-\frac{\beta}{\beta+1}}
 \|\Lambda_{x_{2}}^{\beta}\partial_{x_{2}}u_{1}
 \|_{L^{2}}^{\frac{\beta}{\beta+1}}
 \|\theta\|_{L^{\infty}}^{1-\frac{1}{\beta+1}}
 \|\Lambda_{x_{2}}^{\beta}\partial_{x_{2}}\theta
 \|_{L^{2}}^{\frac{1}{\beta+1}}
  \nonumber\\
 \leq&C \|u_{1} \|_{L^{\infty}}  \|\nabla\theta\|_{L^{2}}^{2-\frac{1}{\beta}}
 \|\Lambda_{x_{2}}^{\beta}\nabla\theta\|_{L^{2}}^{\frac{1}{\beta}}
 +C\|\nabla\theta\|_{L^{2}}^{2-\frac{1}{\beta}}
 \|\Lambda_{x_{2}}^{\beta}\nabla\theta\|_{L^{2}}^{\frac{1}{\beta}}
 \|u_{1}\|_{L^{\infty}}^{\frac{1}{\beta+1}}
 \|\theta\|_{L^{\infty}}^{\frac{\beta}{\beta+1}}
 \nonumber\\
 \leq&\epsilon\|\Lambda_{x_{2}}^{\beta}\nabla\theta\|_{L^{2}}^{2}+C(\epsilon)
\|u_{1}\|_{L^{\infty}}^{\frac{2\beta}{2\beta-1}} \|\nabla\theta\|_{L^{2}}^{2}+C(\epsilon)\Big( \|u_{1}\|_{L^{\infty}}^{\frac{1}{\beta+1}}
 \|\theta\|_{L^{\infty}}^{\frac{\beta}{\beta+1}}\Big)^{\frac{2\beta}{2\beta-1}}\|\nabla\theta\|_{L^{2}}^{2}
  \nonumber\\
 \leq&\epsilon\|\Lambda_{x_{2}}^{\beta}\nabla\theta\|_{L^{2}}^{2}+C(\epsilon)\Big(
\|u_{1}\|_{L^{\infty}}^{\frac{2\beta}{2\beta-1}} + \|u_{1}\|_{L^{\infty}}^{\frac{2\beta}{(\beta+1)(2\beta-1)}}
 \Big)\|\nabla\theta\|_{L^{2}}^{2}.
\end{align}
Inserting the above two estimates (\ref{fg467uilo1}) and (\ref{fg467uilo}) into (\ref{t3326t006}) yields
\begin{align}\label{t3326t007}
\mathcal{H}_{3} \leq&
\epsilon\|\Lambda_{x_{1}}^{\alpha}\nabla\theta\|_{L^{2}}^{2}+3\epsilon
\|\Lambda_{x_{2}}^{\beta}\nabla\theta\|_{L^{2}}^{2}+C(\epsilon)
\Big(\|u_{1}\|_{L^{\infty}}^{\frac{2\beta}{2\beta-1}}+\|u_{1} \|_{L^{\infty}}^{\frac{2\beta}{(2\alpha+1)\beta-1}}\Big)
\|\nabla\theta\|_{L^{2}}^{2}\nonumber\\&
+C(\epsilon)\Big(
\|u_{1}\|_{L^{\infty}}^{\frac{2\beta}{2\beta-1}} + \|u_{1}\|_{L^{\infty}}^{\frac{2\beta}{(\beta+1)(2\beta-1)}}
 \Big)\|\nabla\theta\|_{L^{2}}^{2}.
\end{align}
Finally, following the estimate of $\mathcal{H}_{31}$, one directly gets for $\beta>\frac{1}{2}$
\begin{align}\label{t3326t008}
\mathcal{H}_{4} =&2\int_{\mathbb{R}^{2}}{ u_{2} \partial_{x_{2}x_{2}}\theta\partial_{x_{2}}\theta\,dx}
\nonumber\\
 \leq&C\|\Lambda_{x_{2}}^{1-\beta}\partial_{x_{2}}\theta\|_{L^{2}} \|\Lambda_{x_{2}}^{\beta}(u_{2} \partial_{x_{2}}\theta)\|_{L^{2}}  \nonumber\\
 \leq&\epsilon\|\Lambda_{x_{2}}^{\beta}\nabla\theta\|_{L^{2}}^{2}+C(\epsilon)\Big(
\|u_{2}\|_{L^{\infty}}^{\frac{2\beta}{2\beta-1}} + \|u_{2}\|_{L^{\infty}}^{\frac{2\beta}{(\beta+1)(2\beta-1)}}
 \Big)\|\nabla\theta\|_{L^{2}}^{2}.
\end{align}
Collecting the estimates (\ref{t3326t002}), (\ref{t3326t004}), (\ref{t3326t005}), (\ref{t3326t007}) and (\ref{t3326t008}), and selecting $\epsilon$ suitable small, it follows that
\begin{eqnarray}\label{t3326t009}
 \frac{d}{dt}\|\nabla \theta(t)\|_{L^{2}}^{2}+
 \|\Lambda_{x_{1}}^{\alpha}\nabla\theta\|_{L^{2}}^{2}+
 \|\Lambda_{x_{2}}^{\beta}\nabla\theta\|_{L^{2}}^{2} \leq H(t)\|\nabla \theta\|_{L^{2}}^{2},
\end{eqnarray}
where
$$H(t)=
C\Big(\|u\|_{L^{\infty}}^{\frac{2\beta}{2\beta-1}}+\|u_{2} \|_{L^{\infty}}^{\frac{2\beta}{(2\alpha+1)\beta-1}}+\|u\|_{L^{\infty}} ^{\frac{2\alpha\beta}{(2\alpha+1)\beta-1}} + \|u\|_{L^{\infty}}^{\frac{2\beta}{(\beta+1)(2\beta-1)}}
 \Big).$$
Obviously, it is easy to show
$$H(t)\leq C\Big(1+\|u\|_{L^{\infty}}^{\varrho}\Big)$$
where
$$\varrho=\max\Big\{\frac{2\beta}{2\beta-1},\, \frac{2\beta}{(2\alpha+1)\beta-1} \Big\}>1.$$
By denoting
$$A(t):=\|\nabla \theta(t)\|_{L^{2}}^{2},\quad B(t):=
 \|\Lambda_{x_{1}}^{\alpha}\nabla\theta(t)\|_{L^{2}}^{2}+
 \|\Lambda_{x_{2}}^{\beta}\nabla\theta(t)\|_{L^{2}}^{2},$$
 we therefore deduce from \eqref{t3326t009} that
\begin{eqnarray}\label{t3326t011}
 \frac{d}{dt}A(t)+
 B(t)\leq C A(t)+C\|u\|_{L^{\infty}}^{\varrho}A(t).
\end{eqnarray}
We deduce by Lemma \ref{gfhgj} that
$$\|\partial_{x_{1}}\theta(t)\|_{L^{2}}\leq C\|\theta(t)\|_{L^{2}}^{1-\frac{1}{\alpha+1}}\|\Lambda_{x_{1}}^{\alpha}
\partial_{x_{1}}\theta(t)\|_{L^{2}}^{\frac{1}{\alpha+1}}\leq C\|\theta_{0}\|_{L^{2}}^{1-\frac{1}{\alpha+1}}\|\Lambda_{x_{1}}^{\alpha}
\nabla\theta(t)\|_{L^{2}}^{\frac{1}{\alpha+1}},$$
$$\|\partial_{x_{2}}\theta(t)\|_{L^{2}}\leq C\|\theta(t)\|_{L^{2}}^{1-\frac{1}{\beta+1}}\|\Lambda_{x_{2}}^{\beta}
\partial_{x_{2}}\theta(t)\|_{L^{2}}^{\frac{1}{\beta+1}}\leq C\|\theta_{0}\|_{L^{2}}^{1-\frac{1}{\beta+1}}\|\Lambda_{x_{2}}^{\beta}
\nabla\theta(t)\|_{L^{2}}^{\frac{1}{\beta+1}}.$$
This further allows us to deduce
\begin{eqnarray}\label{t3326t012}{C}^{-1}A^{\gamma}(t)\leq B (t),\qquad \gamma=\min\{\alpha,\,\beta\}+1>1.\end{eqnarray}
Thanks to Lemma \ref{gfhgj} again, we have
$$\|\partial_{x_{1}}^{\sigma}\theta(t)\|_{L^{2}}\leq C\|\theta(t)\|_{L^{2}}^{1-\frac{\sigma}{\alpha+1}}\|\Lambda_{x_{1}}^{\alpha}
\partial_{x_{1}}\theta(t)\|_{L^{2}}^{\frac{\sigma}{\alpha+1}}\leq C\|\theta_{0}\|_{L^{2}}^{1-\frac{\sigma}{\alpha+1}}\|\Lambda_{x_{1}}^{\alpha}
\nabla\theta(t)\|_{L^{2}}^{\frac{\sigma}{\alpha+1}},$$
$$\|\partial_{x_{2}}^{\sigma}\theta(t)\|_{L^{2}}\leq C\|\theta(t)\|_{L^{2}}^{1-\frac{\sigma}{\beta+1}}\|\Lambda_{x_{2}}^{\beta}
\partial_{x_{2}}\theta(t)\|_{L^{2}}^{\frac{\sigma}{\beta+1}}\leq C\|\theta_{0}\|_{L^{2}}^{1-\frac{\sigma}{\beta+1}}\|\Lambda_{x_{2}}^{\beta}
\nabla\theta(t)\|_{L^{2}}^{\frac{\sigma}{\beta+1}},$$
where $0\leq \sigma\leq \min\{\alpha,\,\beta\}+1$. Now taking some $1<\sigma\leq \min\{\alpha,\,\beta\}+1$, we obtain
\begin{align}\label{t3326t013}
\|\Lambda^{\sigma}\theta(t)\|_{L^{2}} \leq&\|\partial_{x_{2}}^{\sigma}\theta(t)\|_{L^{2}}
+\|\partial_{x_{1}}^{\sigma}\theta(t)\|_{L^{2}}\nonumber\\
 \leq&C(\|\Lambda_{x_{1}}^{\alpha}\nabla\theta(t)\|_{L^{2}} +
 \|\Lambda_{x_{2}}^{\beta}\nabla\theta(t)\|_{L^{2}}),
\end{align}
which leads to
$$\|\Lambda^{\sigma}\theta(t)\|_{L^{2}}\leq e+B(t).$$
Since $\mathcal{R}$ is a bounded operator in homogenous Besov space $\dot{B}_{\infty,\,\infty}^{0}$, this yields
\begin{align}\label{trexzw01}
\|\mathcal {R}f\|_{\dot{B}_{\infty,\,\infty}^{0}}\leq C\|f\|_{\dot{B}_{\infty,\,\infty}^{0}}.
\end{align}
In order to control $\|u\|_{L^{\infty}}$, we need the following logarithmic Sobolev interpolation inequality (see for example \cite{KKozonota})
\begin{align}\label{trexzw02}
\|f\|_{L^{\infty}}\leq C\left(1+\|f\|_{L^{2}}+\|f\|_{\dot{B}_{\infty,\,\infty}^{0}}\ln \big(e+\|\Lambda^{\sigma}f\|_{L^{2}} \big)\right),\quad \forall \sigma>1.
\end{align}
Hence it follows from (\ref{t3326t011}), \eqref{t3326t012}, (\ref{t3326t013}), \eqref{trexzw01} and \eqref{trexzw02} that
\begin{align}
 \frac{d}{dt}A(t)+
 B(t) \leq& C A(t)+C\|u(t)\|_{\dot{B}_{\infty,\,\infty}^{0}}^{\varrho}\Big(\ln\big(e
 +\|\Lambda^{\sigma}u(t)\|_{L^{2}} \big)\Big)^{\varrho}A(t)\nonumber\\
 \leq& C A(t)+C\|\mathcal {R}^{\perp}\theta(t)\|_{\dot{B}_{\infty,\,\infty}^{0}}^{\varrho}\Big(\ln\big(e+\|\Lambda^{\sigma}\mathcal {R}^{\perp}\theta(t)\|_{L^{2}} \big)\Big)^{\varrho}A(t)\nonumber\\
 \leq& C A(t)+C\|\theta(t)\|_{\dot{B}_{\infty,\,\infty}^{0}}^{\varrho}\Big(\ln\big(e+\|\Lambda^{\sigma} \theta(t)\|_{L^{2}} \big)\Big)^{\varrho}A(t)
 \nonumber\\
 \leq& C A(t)+C\|\theta(t)\|_{L^{\infty}}^{\varrho}\Big(\ln\big(e+\|\Lambda^{\sigma} \theta(t)\|_{L^{2}} \big)\Big)^{\varrho}A(t)
 \nonumber\\
 \leq& C A(t)+C\|\theta_{0}\|_{L^{\infty}}^{\varrho}\Big(\ln\big(e+B(t) \big)\Big)^{\varrho}A(t),\nonumber
\end{align}
where we have used the embedding $L^{\infty}\hookrightarrow \dot{B}_{\infty,\,\infty}^{0}$. In fact, the embedding $L^{\infty}\hookrightarrow \dot{B}_{\infty,\,\infty}^{0}$ can be deduced by
$$\|\theta\|_{\dot{B}_{\infty,\,\infty}^{0}}=\sup_{j\in\mathbb{Z}}
\|\dot{\Delta}_{j}\theta\|_{L^{\infty}}\leq C\sup_{j\in\mathbb{Z}}
\|\theta\|_{L^{\infty}}=C\|\theta\|_{L^{\infty}}.$$
We finally get
\begin{eqnarray}\label{t3326t014}
 \frac{d}{dt}A(t)+
 B(t)
 \leq C \big(A(t)+e\big)+C\|\theta_{0}\|_{L^{\infty}}^{\varrho}\Big(\ln\big(A(t)+B(t)+e \big)\Big)^{\varrho}\big(A(t)+e\big).
\end{eqnarray}
Applying the logarithmic type Gronwall
inequality (see Lemma \ref{addLdem01}) to (\ref{t3326t014}), we therefore obtain
$$A(t)+\int_{0}^{t}{B(s)\,ds}\leq C,$$
which is nothing but the desired estimate (\ref{asddfgdfht302}). Consequently, we complete the proof of Proposition \ref{Lpp302}.
\end{proof}
\vskip .1in
Next we will prove the global
$H^1$-bound for $\beta>\frac{1-\alpha}{2\alpha}$ and $\alpha>\frac{1}{2}$.
\begin{Pros}\label{Lpp303}
Assume $\theta_{0}$ satisfies the assumptions stated in Theorem \ref{ThSQG} and let $(u, \theta)$ be the corresponding solution. If $\alpha$ and $\beta$ satisfy
\begin{eqnarray}
\beta>\frac{1-\alpha}{2\alpha} \ \ \mbox{and}\ \  \alpha>\frac{1}{2},\nonumber
\end{eqnarray}
then, for any $t>0$,
\begin{eqnarray}\label{asd556fgdfsd}
\|\nabla\theta(t)\|_{L^{2}}^{2}+ \int_{0}^{t}{(\|\Lambda_{x_{1}}^{\alpha}\nabla\theta(
\tau)\|_{L^{2}}^{2}+\|\Lambda_{x_{2}}^{\beta}\nabla\theta(
\tau)\|_{L^{2}}^{2})\,d\tau}\leq
C(t,\,\theta_{0}),
\end{eqnarray}
where $C(t, \,\theta_{0})$ is a constant depending on $t$ and the initial data $\theta_0$.
\end{Pros}

\begin{rem}\rm
In this case, our main target is focused on the $\alpha$ and $\beta$ satisfying
\begin{eqnarray}\label{cvbnyutyu23}
\alpha>\beta>\frac{1-\alpha}{2\alpha}\ \ \mbox{and}\ \  \alpha>\frac{1}{2}.
\end{eqnarray}
The upper bound restriction on $\beta$, namely $\beta<\alpha$ is actually a technical
assumption. In common sense, it is commonly believed that the diffusion term is always good term and the
larger the power $\beta$ is, the better effects it produces. As a matter of fact, if $\beta\geq\alpha$ and $\alpha>\frac{1}{2}$, then Proposition \ref{Lpp303} is a direct consequence of Proposition \ref{Lpp302}.
\end{rem}

\begin{proof}[{Proof of Proposition \ref{Lpp303}}]
It follows from (\ref{t3326t002}) that
\begin{eqnarray}\label{t3326t016}
\frac{1}{2}\frac{d}{dt}\|\nabla \theta(t)\|_{L^{2}}^{2}+
 \|\Lambda_{x_{1}}^{\alpha}\nabla\theta\|_{L^{2}}^{2}+
 \|\Lambda_{x_{2}}^{\beta}\nabla\theta\|_{L^{2}}^{2}= \mathcal{H}_{1}+\mathcal{H}_{2}+\mathcal{H}_{3}+\mathcal{H}_{4}.
\end{eqnarray}
By means of the commutator (\ref{t2326003}), it ensures for $\alpha>\frac{1}{2}$
\begin{align} \label{tsdgdsgu8}
\mathcal{H}_{1} =&-\int_{\mathbb{R}^{2}}{\partial_{x_{1}}u_{1} \partial_{x_{1}}\theta\partial_{x_{1}}\theta\,dx}\nonumber\\
 =&2\int_{\mathbb{R}^{2}}{u_{1} \partial_{x_{1}}\theta \partial_{x_{1}x_{1}}\theta\,dx}
\nonumber\\
 \leq&C\|\Lambda_{x_{1}}^{1-\alpha}\partial_{x_{1}}\theta\|_{L^{2}} \|\Lambda_{x_{1}}^{\alpha}(u_{1} \partial_{x_{1}}\theta)\|_{L^{2}}
\nonumber\\
 \leq&C\|\Lambda_{x_{1}}^{1-\alpha}\partial_{x_{1}}\theta\|_{L^{2}}( \|u_{1}\|_{L^{\infty}} \|\Lambda_{x_{1}}^{\alpha}\partial_{x_{1}}\theta\|_{L^{2}}+ \|\Lambda_{x_{1}}^{\alpha}u_{1} \partial_{x_{1}}\theta\|_{L^{2}})
\nonumber\\
 \leq&C\|\partial_{x_{1}}\theta\|_{L^{2}}^{1-\frac{1-\alpha}{\alpha}}\|\Lambda_{x_{1}}^{\alpha}\partial_{x_{1}}
\theta\|_{L^{2}}^{\frac{1-\alpha}{\alpha}}( \|u_{1}\|_{L^{\infty}} \|\Lambda_{x_{1}}^{\alpha}\partial_{x_{1}}\theta\|_{L^{2}}+ \|\Lambda_{x_{1}}^{\alpha}u_{1}\|_{L^{\frac{2(\alpha+1)}{\alpha}}} \| \partial_{x_{1}}\theta\|_{L^{2(\alpha+1)}})
\nonumber\\
 \leq&C\|\partial_{x_{1}}\theta\|_{L^{2}}^{1-\frac{1-\alpha}{\alpha}}
 \|\Lambda_{x_{1}}^{\alpha}\partial_{x_{1}}
\theta\|_{L^{2}}^{\frac{1-\alpha}{\alpha}}  \|u_{1}\|_{L^{\infty}} \|\Lambda_{x_{1}}^{\alpha}\partial_{x_{1}}\theta\|_{L^{2}} \nonumber\\& +
C\|\partial_{x_{1}}\theta\|_{L^{2}}^{1-\frac{1-\alpha}{\alpha}}
\|\Lambda_{x_{1}}^{\alpha}\partial_{x_{1}}
\theta\|_{L^{2}}^{\frac{1-\alpha}{\alpha}} \|u_{1}\|_{L^{\infty}}^{1-\frac{\alpha}{\alpha+1}}
\|\Lambda_{x_{1}}^{\alpha}\partial_{x_{1}}u_{1}\|_{L^{2}}^{\frac{\alpha}{\alpha+1}} \|\theta\|_{L^{\infty}}^{1-\frac{1}{\alpha+1}}
\|\Lambda_{x_{1}}^{\alpha}\partial_{x_{1}}\theta\|_{L^{2}}^{\frac{1}{\alpha+1}}
\nonumber\\
 \leq&\epsilon\|\Lambda_{x_{1}}^{\alpha}\nabla\theta\|_{L^{2}}^{2}+C(\epsilon)
\|u \|_{L^{\infty}}^{\frac{2\alpha}{2\alpha-1}} \|\nabla\theta\|_{L^{2}}^{2}+C(\epsilon)
\|u \|_{L^{\infty}}^{\frac{2\alpha}{(\alpha+1)(2\alpha-1)}}\|\nabla\theta\|_{L^{2}}^{2}.
\end{align}
We rewrite $\mathcal{H}_{2}$ as
\begin{eqnarray}
\mathcal{H}_{2}=\int_{\mathbb{R}^{2}}{ u_{2} \partial_{x_{1}x_{2}}\theta\partial_{x_{1}}\theta\,dx}
+\int_{\mathbb{R}^{2}}{ u_{2} \partial_{x_{2}}\theta\partial_{x_{1}x_{1}}\theta\,dx}:=\mathcal{H}_{21}+\mathcal{H}_{22}.\nonumber
\end{eqnarray}
According to the estimate of (\ref{tsdgdsgu8}), we infer that
\begin{align}
\mathcal{H}_{21} \leq&C\|\Lambda_{x_{1}}^{1-\alpha}\partial_{x_{2}}\theta\|_{L^{2}} \|\Lambda_{x_{1}}^{\alpha}(u_{2} \partial_{x_{1}}\theta)\|_{L^{2}}\nonumber\\
 \leq&\epsilon\|\Lambda_{x_{1}}^{\alpha}\nabla\theta\|_{L^{2}}^{2}+C(\epsilon)
\|u \|_{L^{\infty}}^{\frac{2\alpha}{2\alpha-1}} \|\nabla\theta\|_{L^{2}}^{2}+C(\epsilon)
\|u\|_{L^{\infty}}^{\frac{2\alpha}{(\alpha+1)(2\alpha-1)}}\|\nabla\theta\|_{L^{2}}^{2}.\nonumber
\end{align}
As $\alpha$ and $\beta$ satisfy the condition (\ref{cvbnyutyu23}), we may choose $\widetilde{\delta} \in (1-\alpha,\,\alpha)$ as
$$\frac{\widetilde{\delta}}{\alpha+1}+\frac{1}{\beta+1}=1\quad \mbox{or}\quad\widetilde{\delta}=\frac{\alpha+1}{\beta+1}\beta.$$
Now the term $\mathcal{H}_{22}$ can be estimated as follows
\begin{align}\label{tsdvvdfwert}
\mathcal{H}_{22} \leq&C\|\Lambda_{x_{1}}^{1-\widetilde{\delta}}\partial_{x_{1}}\theta\|_{L^{2}} \|\Lambda_{x_{1}}^{\widetilde{\delta}}(u_{2} \partial_{x_{2}}\theta)\|_{L^{2}}\nonumber\\
 \leq& C\|\Lambda_{x_{1}}^{1-\widetilde{\delta}}\partial_{x_{1}}\theta\|_{L^{2}} (\|u_{2}\|_{L^{\infty}}\|\Lambda_{x_{1}}^{\widetilde{\delta}}\partial_{x_{2}}\theta\|_{L^{2}}
+\|\Lambda_{x_{1}}^{\widetilde{\delta}}u_{2} \partial_{x_{2}}\theta\|_{L^{2}})\nonumber\\
 \leq& C\|\Lambda_{x_{1}}^{1-\widetilde{\delta}}\partial_{x_{1}}\theta\|_{L^{2}} (\|u_{2}\|_{L^{\infty}}\|\Lambda_{x_{1}}^{\widetilde{\delta}}
 \partial_{x_{2}}\theta\|_{L^{2}}
+\|\Lambda_{x_{1}}^{\widetilde{\delta}}u_{2}
\|_{L^{\frac{2(\alpha+1)}{\widetilde{\delta}}}}
\|\partial_{x_{2}}\theta\|_{L^{2(\beta+1)}})
\nonumber\\
 \leq& C\|\partial_{x_{1}}\theta\|_{L^{2}}^{1-\frac{1-\widetilde{\delta}}{\alpha}}
\|\Lambda_{x_{1}}^{\alpha}\partial_{x_{1}}\theta\|_{L^{2}}^{\frac{1-\widetilde{\delta}}{\alpha}}  \|u_{2}\|_{L^{\infty}}\|\partial_{x_{2}}\theta\|_{L^{2}}
^{1-\frac{\widetilde{\delta}}{\alpha}}
\|\Lambda_{x_{1}}^{\alpha}\partial_{x_{2}}\theta\|_{L^{2}}
^{\frac{\widetilde{\delta}}{\alpha}}\nonumber\\&
+C\| \partial_{x_{1}}\theta\|_{L^{2}}^{1-\frac{1-\widetilde{\delta}}{\alpha}}
\|\Lambda_{x_{1}}^{\alpha}\partial_{x_{1}}\theta\|_{L^{2}}
^{\frac{1-\widetilde{\delta}}{\alpha}}
\|u_{2}\|_{L^{\infty}}^{1-\frac{\widetilde{\delta}}{\alpha+1}}
\|\Lambda_{x_{1}}^{\alpha}\partial_{x_{1}}u_{2}\|_{L^{2}}
^{\frac{\widetilde{\delta}}{\alpha+1}}
\|\theta\|_{L^{\infty}}^{1-\frac{1}{\beta+1}}
\|\Lambda_{x_{2}}^{\beta}\partial_{x_{2}}\theta\|_{L^{2}}^{\frac{1}{\beta+1}}
\nonumber\\
 \leq&\epsilon\|\Lambda_{x_{1}}^{\alpha}\nabla\theta\|_{L^{2}}^{2}+
 \epsilon\|\Lambda_{x_{2}}^{\beta}
\nabla\theta\|_{L^{2}}^{2}+C(\epsilon)\Big(
\|u \|_{L^{\infty}}^{\frac{2\alpha}{2\alpha-1}}+\|u \|_{L^{\infty}}^{\frac{2\alpha}{2\alpha\beta+\alpha-1}}\Big) \|\nabla\theta\|_{L^{2}}^{2}.
\end{align}
Similar to (\ref{tsdvvdfwert}), it is also clear that
\begin{align}
\mathcal{H}_{3} =&\int_{\mathbb{R}^{2}}{\theta\partial_{x_{1}x_{2}}u_{1}\partial_{x_{2}}\theta\,dx}
+\int_{\mathbb{R}^{2}}{\theta\partial_{x_{2}}u_{1}\partial_{x_{1}x_{2}}\theta\,dx}
\nonumber\\
 \leq&C\|\Lambda_{x_{1}}^{1-\widetilde{\delta}}\partial_{x_{2}}u_{1}\|_{L^{2}} \|\Lambda_{x_{1}}^{\widetilde{\delta}}(\theta\partial_{x_{2}}\theta)\|_{L^{2}}
+C\|\Lambda_{x_{1}}^{1-\widetilde{\delta}}\partial_{x_{2}}\theta\|_{L^{2}} \|\Lambda_{x_{1}}^{\widetilde{\delta}}(\theta\partial_{x_{2}}u_{1})\|_{L^{2}}
\nonumber\\
 \leq&\epsilon\|\Lambda_{x_{1}}^{\alpha}\nabla\theta\|_{L^{2}}^{2}
 +\epsilon\|\Lambda_{x_{2}}^{\beta}
\nabla\theta\|_{L^{2}}^{2}+C(\epsilon)\|\nabla\theta\|_{L^{2}}^{2}
\nonumber\\& +C(\epsilon) \|u\|_{L^{\infty}}^{\frac{2\alpha\beta}{2\alpha\beta+\alpha-1}} \|\theta\|_{L^{\infty}}^{\frac{2\alpha}{2\alpha\beta+\alpha-1}} \|\nabla\theta\|_{L^{2}}^{2}
\nonumber\\
 \leq&\epsilon\|\Lambda_{x_{1}}^{\alpha}\nabla\theta\|_{L^{2}}^{2}
 +\epsilon\|\Lambda_{x_{2}}^{\beta}
\nabla\theta\|_{L^{2}}^{2}+C(\epsilon)\|\nabla\theta\|_{L^{2}}^{2} +C(\epsilon)
\|u\|_{L^{\infty}}^{\frac{2\alpha\beta}{2\alpha\beta+\alpha-1}}  \|\nabla\theta\|_{L^{2}}^{2}.\nonumber
\end{align}
Repeating the argument used in proving (\ref{tsdvvdfwert}), one obtains
\begin{align}
\mathcal{H}_{4} =&\int_{\mathbb{R}^{2}}{\partial_{x_{1}}u_{1} \partial_{x_{2}}\theta\partial_{x_{2}}\theta\,dx}
\nonumber\\
 =&-2
\int_{\mathbb{R}^{2}}{ u_{1} \partial_{x_{2}}\theta\partial_{x_{1}x_{2}}\theta\,dx}
\nonumber\\
 \leq&
C\|\Lambda_{x_{1}}^{1-\widetilde{\delta}}\partial_{x_{2}}\theta\|_{L^{2}} \|\Lambda_{x_{1}}^{\widetilde{\delta}}(u_{1}\partial_{x_{2}}\theta)\|_{L^{2}}
\nonumber\\
 \leq&\epsilon\|\Lambda_{x_{1}}^{\alpha}\nabla\theta\|_{L^{2}}^{2}
 +\epsilon\|\Lambda_{x_{2}}^{\beta}
\nabla\theta\|_{L^{2}}^{2}+C(\epsilon)\Big(
\|u \|_{L^{\infty}}^{\frac{2\alpha}{2\alpha-1}}+ \|u \|_{L^{\infty}}^{\frac{2\alpha}{2\alpha\beta+\alpha-1}} \Big) \|\nabla\theta\|_{L^{2}}^{2}.\nonumber
\end{align}
Putting the above estimates into \eqref{t3326t016} and taking $\epsilon$ sufficiently small, it turns out that
\begin{eqnarray}
 \frac{d}{dt}\|\nabla \theta(t)\|_{L^{2}}^{2}+
 \|\Lambda_{x_{1}}^{\alpha}\nabla\theta\|_{L^{2}}^{2}+
 \|\Lambda_{x_{2}}^{\beta}\nabla\theta\|_{L^{2}}^{2} \leq \widetilde{H}(t)\|\nabla \theta\|_{L^{2}}^{2},\nonumber
\end{eqnarray}
where
$$\widetilde{H}(t)=
C\Big(\|u\|_{L^{\infty}}^{\frac{2\alpha}{2\alpha-1}}+\|u \|_{L^{\infty}}^{\frac{2\alpha}{(\alpha+1)(2\alpha-1)}} + \|u \|_{L^{\infty}}^{\frac{2\alpha}{2\alpha\beta+\alpha-1}}
+ \|u\|_{L^{\infty}}^{\frac{2\alpha\beta}{2\alpha\beta+\alpha-1}}\Big).$$
Obviously, we have
$$\widetilde{H}(t)\leq C\Big(1+\|u\|_{L^{\infty}}^{\widetilde{\varrho}}\Big)$$
where
$$\widetilde{\varrho}=\max\Big\{\frac{2\alpha}{2\alpha-1},\,\frac{2\alpha}{2\alpha\beta+\alpha-1}\Big\}>1.$$
Finally, the left part of the proof of Proposition \ref{Lpp303} proceeds
by the same manner as that of Proposition \ref{Lpp302}.
 In order to avoid redundancy, the details are omitted here. This completes the proof of Proposition \ref{Lpp303}.
\end{proof}

\vskip .1in
With the global
$H^1$-bound of $\theta$ at our disposal, we will establish the global
$H^2$-bound.
\begin{Pros}\label{Lp5t12}
Assume $\theta_{0}$ satisfies the assumptions stated in Theorem \ref{ThSQG} and let $(u, \theta)$ be the corresponding solution. If $\alpha$ and $\beta$ satisfy (\ref{sdf2334}),
then, for any $t>0$,
\begin{eqnarray}\label{t3326t017}
\|\Delta\theta(t)\|_{L^{2}}^{2}+ \int_{0}^{t}{(\|\Lambda_{x_{1}}^{\alpha}\Delta\theta(
\tau)\|_{L^{2}}^{2}+\|\Lambda_{x_{2}}^{\beta}\Delta\theta(
\tau)\|_{L^{2}}^{2})\,d\tau}\leq
C(t,\,\theta_{0}),
\end{eqnarray}
where $C(t, \,\theta_{0})$ is a constant depending on $t$ and the initial data $\theta_0$.
\end{Pros}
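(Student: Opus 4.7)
The plan is to derive an $L^2$ estimate for $\Delta\theta$ by applying $\Delta$ to the first equation of $(\ref{SQG})$ and testing against $\Delta\theta$. Using $\nabla\cdot u=0$, the cancellation $\int_{\mathbb{R}^2}(u\cdot\nabla\Delta\theta)\,\Delta\theta\,dx=0$ gives
\begin{equation*}
\frac{1}{2}\frac{d}{dt}\|\Delta\theta\|_{L^{2}}^{2}+\|\Lambda_{x_{1}}^{\alpha}\Delta\theta\|_{L^{2}}^{2}+\|\Lambda_{x_{2}}^{\beta}\Delta\theta\|_{L^{2}}^{2}=-\sum_{i,j}\int_{\mathbb{R}^{2}}\Bigl[2\,\partial_{x_{i}}u_{j}\,\partial_{x_{i}}\partial_{x_{j}}\theta+\partial_{x_{i}x_{i}}u_{j}\,\partial_{x_{j}}\theta\Bigr]\Delta\theta\,dx,
\end{equation*}
so the entire task reduces to controlling finitely many trilinear integrals in which the highest-order factor is $\Delta\theta$ (or, after an integration by parts, an equivalent expression).

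To each such trilinear integral I would apply Lemma \ref{triple} in the general form $(\ref{addqtri})$, placing the lowest-order factor (a first-order derivative of $\theta$ or $u$) in the anisotropic mixed norm $L^{q}_{x_{2}}L^{p}_{x_{1}}$ and the two higher-order factors in $L^{2}$ with added fractional smoothness $\Lambda_{x_{1}}^{\gamma_{1}}$ and $\Lambda_{x_{2}}^{\gamma_{2}}$. The exponents $(p,q,\gamma_{1},\gamma_{2})$ are picked with $\gamma_{1}\le\alpha$ and $\gamma_{2}\le\beta$ so that Lemma \ref{gfhgj} bounds $\|\Lambda_{x_{1}}^{\gamma_{1}}\Delta\theta\|_{L^{2}}$ and $\|\Lambda_{x_{2}}^{\gamma_{2}}\Delta\theta\|_{L^{2}}$ by interpolation between $\|\Delta\theta\|_{L^{2}}$ and $\|\Lambda_{x_{i}}^{\alpha\,\text{or}\,\beta}\Delta\theta\|_{L^{2}}$, while the mixed-norm factor is handled by the one-dimensional Sobolev embeddings in Lemma \ref{dfdasfwqew33}, producing quantities controlled by $\|\nabla\theta\|_{L^{2}}$, $\|\Lambda_{x_{1}}^{\alpha}\nabla\theta\|_{L^{2}}$, $\|\Lambda_{x_{2}}^{\beta}\nabla\theta\|_{L^{2}}$ and $\|\theta\|_{L^{\infty}}$. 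Since $u=\mathcal{R}^{\perp}\theta$, the same bounds hold for $\nabla u$ and $\Delta u$ after invoking $L^{2}$-boundedness of the Riesz transforms.

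Young's inequality then absorbs all fractional-derivative factors into the dissipation and yields a differential inequality of the form
\begin{equation*}
\frac{d}{dt}\|\Delta\theta\|_{L^{2}}^{2}+\|\Lambda_{x_{1}}^{\alpha}\Delta\theta\|_{L^{2}}^{2}+\|\Lambda_{x_{2}}^{\beta}\Delta\theta\|_{L^{2}}^{2}\le G(t)\bigl(1+\|\Delta\theta\|_{L^{2}}^{2}\bigr),
\end{equation*}
where $G(t)$ is a polynomial in $\|\Lambda_{x_{1}}^{\alpha}\nabla\theta\|_{L^{2}}$, $\|\Lambda_{x_{2}}^{\beta}\nabla\theta\|_{L^{2}}$, $\|\nabla\theta\|_{L^{2}}$ and $\|\theta\|_{L^{\infty}}$, hence integrable on $[0,T]$ by Proposition \ref{SDFsda2789} together with Proposition \ref{Lpp302} or \ref{Lpp303}. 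A standard Gronwall argument then delivers $(\ref{t3326t017})$. The main obstacle is the exponent bookkeeping: verifying that the constraints $\gamma_{1}\in(\tfrac{1}{p},1]$, $\gamma_{2}\in(\tfrac{1}{q},1]$ in Lemma \ref{triple} can be satisfied simultaneously with $\gamma_{1}\le\alpha$, $\gamma_{2}\le\beta$ and with the H\"older balance $\tfrac{1}{p}+\tfrac{1}{q}+(\text{remainders})=1$, precisely under the hypothesis $(\ref{sdf2334})$. In the regime $\alpha\le 1/2$ one is forced to take $p>2$ so that $\gamma_{1}=\alpha$ is admissible, and the H\"older balance then forces $\beta>\tfrac{1}{2\alpha+1}$; in the regime $\alpha>1/2$ one can take advantage of an integration by parts in $x_{1}$ to trade $x_{1}$-dissipation for $x_{2}$-dissipation, the scaling eventually permitting $\beta$ as low as $\tfrac{1-\alpha}{2\alpha}$.
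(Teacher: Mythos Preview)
Your high-level structure (apply $\Delta$, test against $\Delta\theta$, bound the trilinear terms via Lemma~\ref{triple}, absorb into dissipation, Gronwall using the $H^{1}$ bounds) matches the paper. But you miss the one device that makes the paper's argument short and clean, and you misread where the constraint $(\ref{sdf2334})$ actually enters.

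The paper does \emph{not} use the general $(p,q)$ form $(\ref{addqtri})$; it uses only the case $p=q=2$, i.e.\ $(\ref{qtri})$, with $\gamma_{1}=\gamma_{2}=\alpha$ in Case~1 ($\alpha>\tfrac12$) and $\gamma_{1}=\gamma_{2}=\beta$ in Case~2 ($\beta>\tfrac12$). The reason this works is a family of elementary Fourier-side ``swap'' inequalities such as
\[
\|\Lambda_{x_{2}}^{\alpha}\partial_{x_{1}}\theta\|_{L^{2}}\le \|\Lambda_{x_{1}}^{\alpha}\nabla\theta\|_{L^{2}},\qquad
\|\Lambda_{x_{1}}^{\beta}\Delta u_{1}\|_{L^{2}}\le \|\Lambda_{x_{2}}^{\beta}\Delta\theta\|_{L^{2}},
\]
coming from $|\xi_{2}|^{2\alpha}|\xi_{1}|^{2}\le |\xi_{1}|^{2\alpha}|\xi|^{2}$ and the Riesz-transform structure $u=\mathcal{R}^{\perp}\theta$. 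These let one place a $\Lambda_{x_{2}}^{\alpha}$ (resp.\ $\Lambda_{x_{1}}^{\beta}$) on a factor and then bound it by the \emph{other} directional dissipation. With this trick, in each case only \emph{one} of $\alpha,\beta$ needs to exceed $\tfrac12$, and condition $(\ref{sdf2334})$ guarantees exactly that $\max(\alpha,\beta)>\tfrac12$.

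Two consequences for your write-up. First, your plan to choose $\gamma_{1}\le\alpha$, $\gamma_{2}\le\beta$ and juggle $(p,q)$ is unnecessary and, as stated, incomplete: when $\alpha>\tfrac12$ but $\beta\le\tfrac12$ (which $(\ref{sdf2334})$ allows for $\alpha$ close to $1$), your requirement $\gamma_{2}\le\beta$ together with $\gamma_{2}>1/q$ forces $q>1/\beta$, and you have not explained how to control the first-order factor in that mixed norm; your remark about ``integration by parts in $x_{1}$ to trade $x_{1}$-dissipation for $x_{2}$-dissipation'' does not correspond to any actual mechanism here. Second, the precise threshold $(\ref{sdf2334})$ does \emph{not} come from the $H^{2}$ step at all: it is the condition under which the $H^{1}$ bound (Propositions~\ref{Lpp302} and~\ref{Lpp303}) closes. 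At the $H^{2}$ level the paper uses only $\max(\alpha,\beta)>\tfrac12$, and then the Gronwall coefficient is $C\|\nabla\theta\|_{L^{2}}^{c}(1+\|\Lambda_{x_{1}}^{\alpha}\nabla\theta\|_{L^{2}}^{2})$ or the $\beta$-analogue, which is integrable by those earlier propositions.
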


\begin{proof}[{Proof of Proposition \ref{Lp5t12}}]
Applying $\Delta$
to the first equation of $(\ref{SQG})$, multiplying the resulting identity by $\Delta \theta$ and integrating over $\mathbb{R}^{2}$ by parts, we immediately deduce that
\begin{eqnarray}\label{t3326t018}
\frac{1}{2}\frac{d}{dt}\|\Delta\theta(t)\|_{L^{2}}^{2}+\|\Lambda_{x_{1}}^{\alpha}\Delta\theta\|_{L^{2}}^{2}+
 \|\Lambda_{x_{2}}^{\beta}\Delta\theta\|_{L^{2}}^{2}
=-\int_{\mathbb{R}^{2}}{\Delta\{(u\cdot\nabla)\theta\} \Delta \theta\,dx}.
\end{eqnarray}
Using the divergence free condition, the term at the right hand side of (\ref{t3326t018}) can be rewritten as
\begin{align}\label{t3326t019}
& -\int_{\mathbb{R}^{2}}{\Delta\{(u\cdot\nabla)\theta\} \Delta \theta\,dx}
\nonumber\\
&=  \int_{\mathbb{R}^{2}}{\Delta u_{1} \partial_{x_{1}}\theta \Delta \theta\,dx}+\int_{\mathbb{R}^{2}}{ \Delta u_{2} \partial_{x_{2}}\theta  \Delta \theta\,dx}
+2\int_{\mathbb{R}^{2}}{\partial_{x_{1}}u_{1}\partial_{x_{1}x_{1}}\theta \Delta \theta\,dx}\nonumber\\& \quad +2\int_{\mathbb{R}^{2}}{\partial_{x_{2}}u_{1}\partial_{x_{1}x_{2}}
\theta \Delta \theta\,dx}+2\int_{\mathbb{R}^{2}}{\partial_{x_{1}}u_{2}\partial_{x_{1}x_{2}}\theta \Delta \theta\,dx}
+2
\int_{\mathbb{R}^{2}}{\partial_{x_{2}}u_{2}\partial_{x_{2}x_{2}}\theta\Delta \theta\,dx}\nonumber\\
&:=  \mathcal{T}_{1}+\mathcal{T}_{2}+\cdot\cdot\cdot+\mathcal{T}_{6}.
\end{align}
Our next goal is to handle the six terms at the right hand side of (\ref{t3326t019}). Let us first notice some basic estimates.
Due to Plancherel's Theorem and the following simple inequality
$$
|\xi_2| ^{2\alpha}|\xi_1|^{2}  \leq  |\xi_1|^{2\alpha}|\xi|^{2},
$$
we arrive at
\begin{eqnarray}\label{tuset001}\|\Lambda_{x_{2}}^{\alpha}\partial_{x_{1}}\theta\|_{L^2}\leq \|\Lambda_{x_{1}}^{\alpha}\nabla\theta\|_{L^2}.\end{eqnarray}
Keeping in mind the fact $u=(-\mathcal
{R}_{2}\theta,\,\,\mathcal {R}_{1}\theta)$ and using the same argument adopted in proving (\ref{tuset001}), one may conclude the following estimates which will be needed to estimate the terms $\mathcal{T}_{1}-\mathcal{T}_{6}$
\begin{eqnarray}\label{tuset002}\|\Lambda_{x_{1}}^{\beta}\Delta u_{1}\|_{L^2}=\|\Lambda_{x_{1}}^{\beta}\Delta \mathcal{R}_{2}\theta\|_{L^2}
\leq\|\Lambda_{x_{2}}^{\beta}\Delta  \theta\|_{L^2},
\end{eqnarray}
\begin{eqnarray}\label{tuset003}\|\Lambda_{x_{2}}^{\alpha}\Delta u_{2}\|_{L^2}^{ \frac{1}{2\alpha}}\leq \|\Lambda_{x_{2}}^{\alpha}\Delta \mathcal{R}_{1}\theta\|_{L^2}^{ \frac{1}{2\alpha}}\leq\|\Lambda_{x_{1}}^{\alpha}\Delta  \theta\|_{L^2},
\end{eqnarray}
\begin{eqnarray}\label{tuset004}
\|\Lambda_{x_{2}}^{\alpha}\partial_{x_{1}x_{1}}\theta\|_{L^2}\leq \|\Lambda_{x_{1}}^{\alpha}\Delta  \theta\|_{L^2},
\end{eqnarray}
\begin{eqnarray}\label{tuset005}
\|\Lambda_{x_{1}}^{\beta}\partial_{x_{1}}u_{1}\|_{L^2}\leq \|\Lambda_{x_{1}}^{\beta}\partial_{x_{1}}\mathcal{R}_{2}\theta\|_{L^2}\leq \|\Lambda_{x_{2}}^{\beta}\nabla\theta\|_{L^2},
\end{eqnarray}
\begin{eqnarray}\label{tuset006}
\|\Lambda_{x_{2}}^{\alpha}\partial_{x_{1}x_{2}}
\theta\|_{L^2}\leq \|\Lambda_{x_{1}}^{\alpha}\Delta  \theta\|_{L^2},
\end{eqnarray}
\begin{eqnarray}\label{tuset007}
\|\Lambda_{x_{1}}^{\beta}\partial_{x_{2}}u_{1}\|_{L^2}\leq \|\Lambda_{x_{1}}^{\beta}\partial_{x_{2}}\theta\|_{L^2}\leq \|\Lambda_{x_{2}}^{\beta}\nabla\theta\|_{L^2},
\end{eqnarray}
\begin{eqnarray}\label{tuset008}
\|\Lambda_{x_{2}}^{\alpha}\partial_{x_{1}x_{2}}\theta\|_{L^2}\leq \|\Lambda_{x_{1}}^{\alpha}\Delta  \theta\|_{L^2},
\end{eqnarray}
\begin{eqnarray}\label{tuset009}
\|\Lambda_{x_{1}}^{\beta}\partial_{x_{1}x_{2}}\theta\|_{L^2}\leq\|\Lambda_{x_{2}}^{\beta}\Delta \theta\|_{L^2},
\end{eqnarray}
\begin{eqnarray}\label{tuset010}
\|\Lambda_{x_{2}}^{\alpha}\partial_{x_{2}}u_{2}\|_{L^2}\leq \|\Lambda_{x_{2}}^{\alpha}\partial_{x_{2}}\mathcal{R}_{1}\theta\|_{L^2}\leq\|\Lambda_{x_{1}}^{\alpha}\nabla \theta\|_{L^2},
\end{eqnarray}
\begin{eqnarray}\label{tuset011}
\|\Lambda_{x_{1}}^{\beta}\partial_{x_{2}x_{2}}\theta\|_{L^2}\leq\|\Lambda_{x_{2}}^{\beta}\Delta \theta\|_{L^2}.
\end{eqnarray}
It should be mentioned that if $\alpha$ and $\beta$ satisfy (\ref{sdf2334}), then $\alpha>\frac{1}{2}$ or $\beta>\frac{1}{2}$ holds true. Therefore, we split the proof into two cases, namely,
$$\mbox{\textbf{Case 1}}:\  \alpha>\frac{1}{2};\qquad \mbox{\textbf{Case 2}}:\  \beta>\frac{1}{2}.$$
For the \textbf{Case 1}, the inequality (\ref{qtri}) implies the following bounds
\begin{align}
\mathcal{T}_{1}
 \leq& C\|\Delta \theta\|_{L^2} \, \|\partial_{x_{1}}\theta\|_{L^2}^{1-\frac{1}{2\alpha}}\|\Lambda_{x_{2}}^{\alpha}\partial_{x_{1}}\theta\|_{L^2}^{ \frac{1}{2\alpha}} \,\|\Delta u_{1}\|_{L^2}^{1-\frac{1}{2\alpha}}\|\Lambda_{x_{1}}^{\alpha}\Delta u_{1}\|_{L^2}^{ \frac{1}{2\alpha}}\nonumber\\
 \leq& C\|\Delta \theta\|_{L^2} \, \|\nabla\theta\|_{L^2}^{1-\frac{1}{2\alpha}}\|\Lambda_{x_{1}}^{\alpha}\nabla\theta\|_{L^2}^{ \frac{1}{2\alpha}} \,\|\Delta \theta\|_{L^2}^{1-\frac{1}{2\alpha}}\|\Lambda_{x_{1}}^{\alpha}\Delta \theta\|_{L^2}^{ \frac{1}{2\alpha}}\quad \Big(\mbox{using} \ (\ref{tuset001})\Big)
\nonumber\\
 \leq&
\epsilon\|\Lambda_{x_{1}}^{\alpha}\Delta\theta\|_{L^{2}}^{2}+C(\epsilon)
\|\nabla\theta\|_{L^2}^{\frac{2(2\alpha-1)}{4\alpha-1}}\|\Lambda_{x_{1}}^{\alpha}\nabla\theta\|_{L^2}^{ \frac{2}{4\alpha-1}} \,\|\Delta \theta\|_{L^2}^{2}\nonumber\\
 \leq&
\epsilon\|\Lambda_{x_{1}}^{\alpha}\Delta\theta\|_{L^{2}}^{2}+C(\epsilon)
\|\nabla\theta\|_{L^2}^{\frac{2(2\alpha-1)}{4\alpha-1}}
(1+\|\Lambda_{x_{1}}^{\alpha}\nabla\theta\|_{L^2}^{ 2} ) \,\|\Delta \theta\|_{L^2}^{2},\nonumber
\end{align}
\begin{align}
\mathcal{T}_{2} 
 \leq& C\|\Delta \theta\|_{L^2} \, \|\partial_{x_{2}}\theta\|_{L^2}^{1-\frac{1}{2\alpha}}
 \|\Lambda_{x_{1}}^{\alpha}\partial_{x_{2}}\theta\|_{L^2}^{ \frac{1}{2\alpha}} \,\|\Delta u_{2}\|_{L^2}^{1-\frac{1}{2\alpha}}\|\Lambda_{x_{2}}^{\alpha}\Delta u_{2}\|_{L^2}^{ \frac{1}{2\alpha}}\nonumber\\
 \leq& C\|\Delta \theta\|_{L^2} \, \|\nabla\theta\|_{L^2}^{1-\frac{1}{2\alpha}}\|\Lambda_{x_{1}}^{\alpha}\nabla\theta\|_{L^2}^{ \frac{1}{2\alpha}} \,\|\Delta \theta\|_{L^2}^{1-\frac{1}{2\alpha}}\|\Lambda_{x_{1}}^{\alpha}\Delta \theta\|_{L^2}^{ \frac{1}{2\alpha}}\quad \Big(\mbox{using} \ (\ref{tuset003})\Big)
\nonumber\\
 \leq&
\epsilon\|\Lambda_{x_{1}}^{\alpha}\Delta\theta\|_{L^{2}}^{2}+C(\epsilon)
\|\nabla\theta\|_{L^2}^{\frac{2(2\alpha-1)}{4\alpha-1}}
(1+\|\Lambda_{x_{1}}^{\alpha}\nabla\theta\|_{L^2}^{ 2} ) \,\|\Delta \theta\|_{L^2}^{2},\nonumber
\end{align}
\begin{align}
\mathcal{T}_{3} 
 \leq& C\|\Delta \theta\|_{L^2} \, \|\partial_{x_{1}}u_{1}\|_{L^2}^{1-\frac{1}{2\alpha}}
 \|\Lambda_{x_{1}}^{\alpha}\partial_{x_{1}}u_{1}\|_{L^2}^{ \frac{1}{2\alpha}} \,\|\partial_{x_{1}x_{1}}\theta\|_{L^2}^{1-\frac{1}{2\alpha}}
\|\Lambda_{x_{2}}^{\alpha}\partial_{x_{1}x_{1}}\theta\|_{L^2}^{ \frac{1}{2\alpha}}\nonumber\\
 \leq& C\|\Delta \theta\|_{L^2} \, \|\nabla\theta\|_{L^2}^{1-\frac{1}{2\alpha}}
 \|\Lambda_{x_{1}}^{\alpha}\nabla\theta\|_{L^2}^{ \frac{1}{2\alpha}} \,\|\Delta \theta\|_{L^2}^{1-\frac{1}{2\alpha}}\|\Lambda_{x_{1}}^{\alpha}\Delta \theta\|_{L^2}^{ \frac{1}{2\alpha}}\quad \Big(\mbox{using} \ (\ref{tuset004})\Big)
\nonumber\\
 \leq&
\epsilon\|\Lambda_{x_{1}}^{\alpha}\Delta\theta\|_{L^{2}}^{2}+C(\epsilon)
\|\nabla\theta\|_{L^2}^{\frac{2(2\alpha-1)}{4\alpha-1}}(1+\|\Lambda_{x_{1}}^{\alpha}\nabla\theta\|_{L^2}^{ 2} ) \,\|\Delta \theta\|_{L^2}^{2},\nonumber
\end{align}
\begin{align}
\mathcal{T}_{4} 
 \leq& C\|\Delta \theta\|_{L^2} \, \|\partial_{x_{2}}u_{1}\|_{L^2}^{1-\frac{1}{2\alpha}}\|\Lambda_{x_{1}}^{\alpha}\partial_{x_{2}}u_{1}\|_{L^2}^{ \frac{1}{2\alpha}} \,\|\partial_{x_{1}x_{2}}
\theta\|_{L^2}^{1-\frac{1}{2\alpha}}
\|\Lambda_{x_{2}}^{\alpha}\partial_{x_{1}x_{2}}
\theta\|_{L^2}^{ \frac{1}{2\alpha}}\nonumber\\
 \leq& C\|\Delta \theta\|_{L^2} \, \|\nabla\theta\|_{L^2}^{1-\frac{1}{2\alpha}}\|\Lambda_{x_{1}}^{\alpha}\nabla\theta\|_{L^2}^{ \frac{1}{2\alpha}} \,\|\Delta \theta\|_{L^2}^{1-\frac{1}{2\alpha}}\|\Lambda_{x_{1}}^{\alpha}\Delta \theta\|_{L^2}^{ \frac{1}{2\alpha}}\quad \Big(\mbox{using} \ (\ref{tuset006})\Big)
\nonumber\\
 \leq&
\epsilon\|\Lambda_{x_{1}}^{\alpha}\Delta\theta\|_{L^{2}}^{2}+C(\epsilon)
\|\nabla\theta\|_{L^2}^{\frac{2(2\alpha-1)}{4\alpha-1}}(1+\|\Lambda_{x_{1}}^{\alpha}\nabla\theta\|_{L^2}^{ 2} ) \,\|\Delta \theta\|_{L^2}^{2},\nonumber
\end{align}
\begin{align}
\mathcal{T}_{5} 
 \leq& C\|\Delta \theta\|_{L^2} \, \|\partial_{x_{1}}u_{2}\|_{L^2}^{1-\frac{1}{2\alpha}}\|\Lambda_{x_{1}}^{\alpha}\partial_{x_{1}}u_{2}\|_{L^2}^{ \frac{1}{2\alpha}} \,\|\partial_{x_{1}x_{2}}\theta\|_{L^2}^{1-\frac{1}{2\alpha}}
\|\Lambda_{x_{2}}^{\alpha}\partial_{x_{1}x_{2}}\theta\|_{L^2}^{ \frac{1}{2\alpha}}\nonumber\\
 \leq& C\|\Delta \theta\|_{L^2} \, \|\nabla\theta\|_{L^2}^{1-\frac{1}{2\alpha}}\|\Lambda_{x_{1}}^{\alpha}\nabla\theta\|_{L^2}^{ \frac{1}{2\alpha}} \,\|\Delta \theta\|_{L^2}^{1-\frac{1}{2\alpha}}\|\Lambda_{x_{1}}^{\alpha}\Delta \theta\|_{L^2}^{ \frac{1}{2\alpha}}\quad \Big(\mbox{using} \ (\ref{tuset008})\Big)
\nonumber\\
 \leq&
\epsilon\|\Lambda_{x_{1}}^{\alpha}\Delta\theta\|_{L^{2}}^{2}+C(\epsilon)
\|\nabla\theta\|_{L^2}^{\frac{2(2\alpha-1)}{4\alpha-1}}(1+\|\Lambda_{x_{1}}^{\alpha}\nabla\theta\|_{L^2}^{ 2} ) \,\|\Delta \theta\|_{L^2}^{2},\nonumber
\end{align}
\begin{align}
\mathcal{T}_{6} 
 \leq& C\|\Delta \theta\|_{L^2} \, \|\partial_{x_{2}}u_{2}\|_{L^2}^{1-\frac{1}{2\alpha}}\|\Lambda_{x_{2}}^{\alpha}\partial_{x_{2}}u_{2}\|_{L^2}^{ \frac{1}{2\alpha}} \,\|\partial_{x_{2}x_{2}}\theta\|_{L^2}^{1-\frac{1}{2\alpha}}
\|\Lambda_{x_{1}}^{\alpha}\partial_{x_{2}x_{2}}\theta\|_{L^2}^{ \frac{1}{2\alpha}}\nonumber\\
 \leq& C\|\Delta \theta\|_{L^2} \, \|\nabla\theta\|_{L^2}^{1-\frac{1}{2\alpha}}\|\Lambda_{x_{1}}^{\alpha}\nabla\theta\|_{L^2}^{ \frac{1}{2\alpha}} \,\|\Delta \theta\|_{L^2}^{1-\frac{1}{2\alpha}}\|\Lambda_{x_{1}}^{\alpha}\Delta \theta\|_{L^2}^{ \frac{1}{2\alpha}}\quad \Big(\mbox{using} \ (\ref{tuset010})\Big)
\nonumber\\
 \leq&
\epsilon\|\Lambda_{x_{1}}^{\alpha}\Delta\theta\|_{L^{2}}^{2}+C(\epsilon)
\|\nabla\theta\|_{L^2}^{\frac{2(2\alpha-1)}{4\alpha-1}}(1+\|\Lambda_{x_{1}}^{\alpha}\nabla\theta\|_{L^2}^{ 2} ) \,\|\Delta \theta\|_{L^2}^{2}.\nonumber
\end{align}
For the \textbf{Case 2}, one may conclude by using the inequality (\ref{qtri}) that
 \begin{align}
\mathcal{T}_{1} 
 \leq& C\|\Delta \theta\|_{L^2} \, \|\partial_{x_{1}}\theta\|_{L^2}^{1-\frac{1}{2\beta}}\|\Lambda_{x_{2}}^{\beta}\partial_{x_{1}}\theta\|_{L^2}^{ \frac{1}{2\beta}} \,\|\Delta u_{1}\|_{L^2}^{1-\frac{1}{2\beta}}\|\Lambda_{x_{1}}^{\beta}\Delta u_{1}\|_{L^2}^{ \frac{1}{2\beta}}\nonumber\\
 \leq& C\|\Delta \theta\|_{L^2} \, \|\nabla\theta\|_{L^2}^{1-\frac{1}{2\beta}}\|\Lambda_{x_{2}}^{\beta}\nabla\theta\|_{L^2}^{ \frac{1}{2\beta}} \,\|\Delta \theta\|_{L^2}^{1-\frac{1}{2\beta}}\|\Lambda_{x_{2}}^{\beta}\Delta \theta\|_{L^2}^{ \frac{1}{2\beta}}
\quad \Big(\mbox{using} \ (\ref{tuset002})\Big)\nonumber\\
 \leq&
\epsilon\|\Lambda_{x_{1}}^{\alpha}\Delta\theta\|_{L^{2}}^{2}+C(\epsilon)
\|\nabla\theta\|_{L^2}^{\frac{2(2\beta-1)}{4\beta-1}}(1+\|\Lambda_{x_{2}}^{\beta}\nabla\theta\|_{L^2}^{ 2} ) \,\|\Delta \theta\|_{L^2}^{2},\nonumber
\end{align}
\begin{align}
\mathcal{T}_{2}  
 \leq& C\|\Delta \theta\|_{L^2} \, \|\partial_{x_{2}}\theta\|_{L^2}^{1-\frac{1}{2\beta}}\|\Lambda_{x_{1}}^{\beta}\partial_{x_{2}}\theta\|_{L^2}^{ \frac{1}{2\beta}} \,\|\Delta u_{2}\|_{L^2}^{1-\frac{1}{2\beta}}\|\Lambda_{x_{2}}^{\beta}\Delta u_{2}\|_{L^2}^{ \frac{1}{2\beta}}\nonumber\\
 \leq& C\|\Delta \theta\|_{L^2} \, \|\nabla\theta\|_{L^2}^{1-\frac{1}{2\beta}}\|\Lambda_{x_{2}}^{\beta}\nabla\theta\|_{L^2}^{ \frac{1}{2\beta}} \,\|\Delta \theta\|_{L^2}^{1-\frac{1}{2\beta}}\|\Lambda_{x_{2}}^{\beta}\Delta \theta\|_{L^2}^{ \frac{1}{2\beta}}
\nonumber\\
 \leq&
\epsilon\|\Lambda_{x_{1}}^{\alpha}\Delta\theta\|_{L^{2}}^{2}+C(\epsilon)
\|\nabla\theta\|_{L^2}^{\frac{2(2\beta-1)}{4\beta-1}}(1+\|\Lambda_{x_{2}}^{\beta}\nabla\theta\|_{L^2}^{ 2} ) \,\|\Delta \theta\|_{L^2}^{2},\nonumber
\end{align}
\begin{align}
\mathcal{T}_{3}  
 \leq& C\|\Delta \theta\|_{L^2} \, \|\partial_{x_{1}}u_{1}\|_{L^2}^{1-\frac{1}{2\beta}}\|\Lambda_{x_{1}}^{\beta}\partial_{x_{1}}u_{1}\|_{L^2}^{ \frac{1}{2\beta}} \,\|\partial_{x_{1}x_{1}}\theta\|_{L^2}^{1-\frac{1}{2\beta}}
\|\Lambda_{x_{2}}^{\beta}\partial_{x_{1}x_{1}}\theta\|_{L^2}^{ \frac{1}{2\beta}}\nonumber\\
 \leq& C\|\Delta \theta\|_{L^2} \, \|\nabla\theta\|_{L^2}^{1-\frac{1}{2\beta}}\|\Lambda_{x_{2}}^{\beta}\nabla\theta\|_{L^2}^{ \frac{1}{2\beta}} \,\|\Delta \theta\|_{L^2}^{1-\frac{1}{2\beta}}\|\Lambda_{x_{2}}^{\beta}\Delta \theta\|_{L^2}^{ \frac{1}{2\beta}}\quad \Big(\mbox{using} \ (\ref{tuset005})\Big)
\nonumber\\
 \leq&
\epsilon\|\Lambda_{x_{1}}^{\beta}\Delta\theta\|_{L^{2}}^{2}+C(\epsilon)
\|\nabla\theta\|_{L^2}^{\frac{2(2\beta-1)}{4\beta-1}}(1+\|\Lambda_{x_{2}}^{\beta}\nabla\theta\|_{L^2}^{ 2} ) \,\|\Delta \theta\|_{L^2}^{2},\nonumber
\end{align}
\begin{align}
\mathcal{T}_{4}  
 \leq& C\|\Delta \theta\|_{L^2} \, \|\partial_{x_{2}}u_{1}\|_{L^2}^{1-\frac{1}{2\beta}}\|\Lambda_{x_{1}}^{\beta}\partial_{x_{2}}u_{1}\|_{L^2}^{ \frac{1}{2\beta}} \,\|\partial_{x_{1}x_{2}}\theta\|_{L^2}^{1-\frac{1}{2\beta}}
\|\Lambda_{x_{2}}^{\beta}\partial_{x_{1}x_{2}}\theta\|_{L^2}^{ \frac{1}{2\beta}}\nonumber\\
 \leq& C\|\Delta \theta\|_{L^2} \, \|\nabla\theta\|_{L^2}^{1-\frac{1}{2\beta}}\|\Lambda_{x_{2}}^{\beta}\nabla\theta\|_{L^2}^{ \frac{1}{2\beta}} \,\|\Delta \theta\|_{L^2}^{1-\frac{1}{2\beta}}\|\Lambda_{x_{2}}^{\beta}\Delta \theta\|_{L^2}^{ \frac{1}{2\beta}}\quad \Big(\mbox{using} \ (\ref{tuset007})\Big)
\nonumber\\
 \leq&
\epsilon\|\Lambda_{x_{1}}^{\beta}\Delta\theta\|_{L^{2}}^{2}+C(\epsilon)
\|\nabla\theta\|_{L^2}^{\frac{2(2\beta-1)}{4\beta-1}}(1+\|\Lambda_{x_{2}}^{\beta}\nabla\theta\|_{L^2}^{ 2} ) \,\|\Delta \theta\|_{L^2}^{2},\nonumber
\end{align}
\begin{align}
\mathcal{T}_{5} 
 \leq& C\|\Delta \theta\|_{L^2} \, \|\partial_{x_{1}}u_{2}\|_{L^2}^{1-\frac{1}{2\beta}}\|\Lambda_{x_{2}}^{\beta}\partial_{x_{1}}u_{2}\|_{L^2}^{ \frac{1}{2\beta}} \,\|\partial_{x_{1}x_{2}}\theta\|_{L^2}^{1-\frac{1}{2\beta}}
\|\Lambda_{x_{1}}^{\beta}\partial_{x_{1}x_{2}}\theta\|_{L^2}^{ \frac{1}{2\beta}}\nonumber\\
 \leq& C\|\Delta \theta\|_{L^2} \, \|\nabla\theta\|_{L^2}^{1-\frac{1}{2\beta}}\|\Lambda_{x_{2}}^{\beta}\nabla\theta\|_{L^2}^{ \frac{1}{2\beta}} \,\|\Delta \theta\|_{L^2}^{1-\frac{1}{2\beta}}\|\Lambda_{x_{2}}^{\beta}\Delta \theta\|_{L^2}^{ \frac{1}{2\beta}}\quad \Big(\mbox{using} \ (\ref{tuset009})\Big)
\nonumber\\
 \leq&
\epsilon\|\Lambda_{x_{1}}^{\beta}\Delta\theta\|_{L^{2}}^{2}+C(\epsilon)
\|\nabla\theta\|_{L^2}^{\frac{2(2\beta-1)}{4\beta-1}}(1+\|\Lambda_{x_{2}}^{\beta}\nabla\theta\|_{L^2}^{ 2} ) \,\|\Delta \theta\|_{L^2}^{2},\nonumber
\end{align}
\begin{align}
\mathcal{T}_{6}  
 \leq& C\|\Delta \theta\|_{L^2} \, \|\partial_{x_{2}}u_{2}\|_{L^2}^{1-\frac{1}{2\beta}}
 \|\Lambda_{x_{2}}^{\beta}\partial_{x_{2}}u_{2}\|_{L^2}^{ \frac{1}{2\beta}} \,\|\partial_{x_{2}x_{2}}\theta\|_{L^2}^{1-\frac{1}{2\beta}}
\|\Lambda_{x_{1}}^{\beta}\partial_{x_{2}x_{2}}\theta\|_{L^2}^{ \frac{1}{2\beta}}\nonumber\\
 \leq& C\|\Delta \theta\|_{L^2} \, \|\nabla\theta\|_{L^2}^{1-\frac{1}{2\beta}}\|\Lambda_{x_{2}}^{\beta}\nabla\theta\|_{L^2}^{ \frac{1}{2\beta}} \,\|\Delta \theta\|_{L^2}^{1-\frac{1}{2\beta}}\|\Lambda_{x_{2}}^{\beta}\Delta \theta\|_{L^2}^{ \frac{1}{2\beta}}\quad \Big(\mbox{using} \ (\ref{tuset011})\Big)
\nonumber\\
 \leq&
\epsilon\|\Lambda_{x_{1}}^{\beta}\Delta\theta\|_{L^{2}}^{2}+C(\epsilon)
\|\nabla\theta\|_{L^2}^{\frac{2(2\beta-1)}{4\beta-1}}(1+\|\Lambda_{x_{2}}^{\beta}\nabla\theta\|_{L^2}^{ 2} ) \,\|\Delta \theta\|_{L^2}^{2}.\nonumber
\end{align}
Combining the above estimates and taking $\epsilon$ suitable small, it allows us to get
\begin{eqnarray}
 \frac{d}{dt}\|\Delta\theta(t)\|_{L^{2}}^{2}+\|\Lambda_{x_{1}}^{\alpha}\Delta\theta\|_{L^{2}}^{2}+
 \|\Lambda_{x_{2}}^{\beta}\Delta\theta\|_{L^{2}}^{2}
\leq C
\|\nabla\theta\|_{L^2}^{\frac{2(2\alpha-1)}{4\alpha-1}}(1+\|\Lambda_{x_{1}}^{\alpha}\nabla\theta\|_{L^2}^{ 2} ) \,\|\Delta \theta\|_{L^2}^{2},\nonumber
\end{eqnarray}
for $\alpha>\frac{1}{2}$, while for $\beta>\frac{1}{2}$, one deduces
\begin{eqnarray}
 \frac{d}{dt}\|\Delta\theta(t)\|_{L^{2}}^{2}+\|\Lambda_{x_{1}}^{\alpha}\Delta\theta\|_{L^{2}}^{2}+
 \|\Lambda_{x_{2}}^{\beta}\Delta\theta\|_{L^{2}}^{2}
\leq C
\|\nabla\theta\|_{L^2}^{\frac{2(2\beta-1)}{4\beta-1}}(1+\|\Lambda_{x_{2}}^{\beta}\nabla\theta\|_{L^2}^{ 2} ) \,\|\Delta \theta\|_{L^2}^{2}.\nonumber
\end{eqnarray}
Applying the classical Gronwall inequality and noticing the key bounds (\ref{asddfgdfht302}) as well as (\ref{asd556fgdfsd}), we immediately conclude
$$\|\Delta\theta(t)\|_{L^{2}}^{2}+ \int_{0}^{t}{(\|\Lambda_{x_{1}}^{\alpha}\Delta\theta(
\tau)\|_{L^{2}}^{2}+\|\Lambda_{x_{2}}^{\beta}\Delta\theta(
\tau)\|_{L^{2}}^{2})\,d\tau}\leq
C(t,\,\theta_{0}).$$
Therefore, the proof of Proposition \ref{Lp5t12} is concluded.
\end{proof}

\vskip .1in
With the global
$H^2$-bound of $\theta$ in hand, we are now ready to establish the global $H^s$-estimate of $\theta$ to complete the proof of Theorem \ref{ThSQG}.
\begin{proof}[Proof of Theorem \ref{ThSQG}]
First, we need the following anisotropic interpolation inequality, whose proof will be provided in the appendix
\begin{eqnarray} \label{dpinty}\|h\|_{L^{\infty}}\leq C\|h\|_{L^{2}}^{1-
\frac{1}{2\delta_{1}}-\frac{1}{2\delta_{2}}}
\|\Lambda_{x_{1}}^{\delta_{1}}h\|_{L^{2}}^{\frac{1}{2\delta_{1}}}
\|\Lambda_{x_{2}}^{\delta_{2}}h\|_{L^{2}}^{\frac{1}{2\delta_{2}}},
\end{eqnarray}
where $\delta_{1}>0$ and $\delta_{2}>0$ satisfy $\frac{1}{\delta_{1}}+\frac{1}{\delta_{2}}<2$.
The above inequality further allows us to show that
\begin{align} \label{yrtew1}
\|\nabla \theta\|_{L^{\infty}} \leq&
C\|\nabla \theta\|_{L^{2}}^{1-
\frac{1}{2(1+\alpha)}-\frac{1}{2(1+\beta)}}
\|\Lambda_{x_{1}}^{1+\alpha}\nabla \theta\|_{L^{2}}^{\frac{1}{2(1+\alpha)}}
\|\Lambda_{x_{2}}^{1+\beta}\nabla \theta\|_{L^{2}}^{\frac{1}{2(1+\beta)}}
\nonumber\\ \leq&
C\|\nabla \theta\|_{L^{2}}^{1-
\frac{1}{2(1+\alpha)}-\frac{1}{2(1+\beta)}}
\|\Lambda_{x_{1}}^{\alpha}\Delta \theta\|_{L^{2}}^{\frac{1}{2(1+\alpha)}}
\|\Lambda_{x_{2}}^{\beta}\Delta \theta\|_{L^{2}}^{\frac{1}{2(1+\beta)}}
\end{align}
and
\begin{align} \label{yrtew2}
\|\nabla u\|_{L^{\infty}}
 \leq& C
\|\nabla u\|_{L^{2}}^{1-
\frac{1}{2(1+\alpha)}-\frac{1}{2(1+\beta)}}
\|\Lambda_{x_{1}}^{1+\alpha}\nabla u\|_{L^{2}}^{\frac{1}{2(1+\alpha)}}
\|\Lambda_{x_{2}}^{1+\beta}\nabla u\|_{L^{2}}^{\frac{1}{2(1+\beta)}}
\nonumber\\ \leq& C
\|\nabla \theta\|_{L^{2}}^{1-
\frac{1}{2(1+\alpha)}-\frac{1}{2(1+\beta)}}
\|\Lambda_{x_{1}}^{1+\alpha}\nabla \theta\|_{L^{2}}^{\frac{1}{2(1+\alpha)}}
\|\Lambda_{x_{2}}^{1+\beta}\nabla \theta\|_{L^{2}}^{\frac{1}{2(1+\beta)}}
\nonumber\\ \leq&
C\|\nabla \theta\|_{L^{2}}^{1-
\frac{1}{2(1+\alpha)}-\frac{1}{2(1+\beta)}}
\|\Lambda_{x_{1}}^{\alpha}\Delta \theta\|_{L^{2}}^{\frac{1}{2(1+\alpha)}}
\|\Lambda_{x_{2}}^{\beta}\Delta \theta\|_{L^{2}}^{\frac{1}{2(1+\beta)}}.
\end{align}
The obtained estimates in (\ref{asddfgdfht302}), (\ref{asd556fgdfsd}) and (\ref{t3326t017}) yield
\begin{eqnarray*}  \int_{0}^{t}(\|\nabla \theta(\tau)\|_{L^{\infty}}^{\frac{4(1+\alpha)(1+\beta)}{2+\alpha+\beta}}+\|\nabla u(\tau)\|_{L^{\infty}}^{\frac{4(1+\alpha)(1+\beta)}{2+\alpha+\beta}})\,d\tau\leq
C(t,\,\theta_{0}).
\end{eqnarray*}
The basic $H^{s}$-estimate of the system (\ref{SQG}) reads
\begin{eqnarray*}
 \frac{d}{dt}
\|\theta(t)\|_{H^{s}}^{2} +\|\Lambda_{x_{1}}^{\alpha}\theta\|_{H^{s}}^{2}
+\|\Lambda_{x_{2}}^{\beta}\theta\|_{H^{s}}^{2}
 \leq  C(1+\|\nabla u\|_{L^{\infty}}+\|\nabla \theta\|_{L^{\infty}}) \|\theta\|_{H^{s}}^{2}.
\end{eqnarray*}
It is then clear that
$$
\|\theta(t)\|_{H^{s}}^{2}+\int_{0}^{t}(\|\Lambda_{x_{1}}^{\alpha}\theta(\tau)\|_{H^{s}}^{2}
+\|\Lambda_{x_{2}}^{\beta}\theta(\tau)\|_{H^{s}}^{2})\,d\tau\leq
C(t,\,\theta_{0}).$$

Finally, we are going to show the uniqueness. In fact, we can prove the uniqueness result in space ${H}^{1}$, namely,
$$\mathcal{Z}:=\left\{\theta: \ \|\nabla\theta(t)\|_{L_{T}^{\infty}L^{2}}^{2}+ \int_{0}^{T}{(\|\Lambda_{x_{1}}^{\alpha}\nabla\theta(
\tau)\|_{L^{2}}^{2}+\|\Lambda_{x_{2}}^{\beta}\nabla\theta(
\tau)\|_{L^{2}}^{2})\,d\tau}<\infty\right\},$$
where $\alpha\in (0,\,1)$ and $\beta\in (0,\,1)$ satisfy \eqref{sdf2334}.
Obviously, we remark that the uniqueness holds true in $H^{s}$ for any $s>1$.
To this end, we consider two solutions $\theta^{(1)}$ and $\theta^{(2)}$ of \eqref{SQG}, emanating from the same initial data, and belonging to $\mathcal{Z}$. We denote $\widetilde{\theta}=\theta^{(1)}-\theta^{(2)}$ and $\widetilde{u}=u^{(1)}-u^{(2)}$, where $\widetilde{u}_{1}=-\mathcal
{R}_{2}\widetilde{\theta}$ and $\widetilde{u}_{2}=\mathcal {R}_{1}\widetilde{\theta}$. Then, we get
\begin{equation}\label{diffSQG}
\left\{\aligned
&\partial_{t}\widetilde{\theta}+(u^{(1)}\cdot \nabla)\widetilde{\theta}+\Lambda_{x_{1}}^{2\alpha}\widetilde{\theta}+
\Lambda_{x_{2}}^{2\beta}\widetilde{\theta}=-(\widetilde{u} \cdot \nabla)\theta^{(2)},\\
&\widetilde{\theta}(x, 0)=0.
\endaligned\right.
\end{equation}
Applying the basic $L^{2}$-estimate to \eqref{diffSQG} yields
\begin{align}
\frac{1}{2}\frac{d}{dt}\|\widetilde{\theta}(t)\|_{L^{2}}^{2}+
 \|\Lambda_{x_{1}}^{\alpha}\widetilde{\theta}\|_{L^{2}}^{2}+
 \|\Lambda_{x_{2}}^{\beta}\widetilde{\theta}\|_{L^{2}}^{2}
 &=-\int_{\mathbb{R}^{2}}{(\widetilde{u} \cdot \nabla)\theta^{(2)}\widetilde{\theta}\,dx}\nonumber\\
 &=-\int_{\mathbb{R}^{2}}{\widetilde{u}_{1}\partial_{x_{1}}\theta^{(2)}
 \widetilde{\theta}\,dx}-\int_{\mathbb{R}^{2}}{\widetilde{u}_{2}\partial_{x_{2}}
 \theta^{(2)}\widetilde{\theta}\,dx}.\nonumber
 \end{align}
By the same argument adopted in the proof of Proposition \ref{Lp5t12},
we know that if $\alpha$ and $\beta$ satisfy (\ref{sdf2334}), then $\alpha>\frac{1}{2}$ or $\beta>\frac{1}{2}$ holds true. For the case $\alpha>\frac{1}{2}$, we deduce by using (\ref{qtri}), $\widetilde{u}_{1}=-\mathcal
{R}_{2}\widetilde{\theta}$ and $\widetilde{u}_{2}=\mathcal {R}_{1}\widetilde{\theta}$ that
\begin{align}
\left|-\int_{\mathbb{R}^{2}}{\widetilde{u}_{1}\partial_{x_{1}}\theta^{(2)}
 \widetilde{\theta}\,dx}\right|&\leq C\|\widetilde{\theta}\|_{L^{2}}\|\partial_{x_{1}}\theta^{(2)}\|_{L^{2}}
 ^{1-\frac{1}{2\alpha}}
 \|\Lambda_{x_{2}}^{\alpha}\partial_{x_{1}}\theta^{(2)}\|_{L^{2}}^{\frac{1}{2\alpha}}
 \|\widetilde{u}_{1}\|_{L^{2}}^{1-\frac{1}{2\alpha}}
 \|\Lambda_{x_{1}}^{\alpha}\widetilde{u}_{1}\|_{L^{2}}^{\frac{1}{2\alpha}}\nonumber\\
 &\leq C\|\widetilde{\theta}\|_{L^{2}}\|\nabla\theta^{(2)}\|_{L^{2}}
 ^{1-\frac{1}{2\alpha}}
 \|\Lambda_{x_{1}}^{\alpha}\nabla\theta^{(2)}\|_{L^{2}}^{\frac{1}{2\alpha}}
 \|\widetilde{\theta}\|_{L^{2}}^{1-\frac{1}{2\alpha}}
 \|\Lambda_{x_{1}}^{\alpha}\widetilde{\theta}\|_{L^{2}}^{\frac{1}{2\alpha}}
 \nonumber\\
 &\leq
 \frac{1}{8}\|\Lambda_{x_{1}}^{\alpha}\widetilde{\theta}\|_{L^{2}}^{2}+
 C\|\nabla\theta^{(2)}\|_{L^{2}}^{\frac{2(2\alpha-1)}{4\alpha-1}} (1+\|\Lambda_{x_{1}}^{\alpha}\nabla\theta^{(2)}\|_{L^{2}}^{2})
 \|\widetilde{\theta}\|_{L^{2}}^{2},\nonumber
 \end{align}
 \begin{align}
\left|-\int_{\mathbb{R}^{2}}{\widetilde{u}_{2}\partial_{x_{2}}\theta^{(2)}
 \widetilde{\theta}\,dx}\right|&\leq C\|\widetilde{\theta}\|_{L^{2}}\|\partial_{x_{2}}\theta^{(2)}\|_{L^{2}}
 ^{1-\frac{1}{2\alpha}}
 \|\Lambda_{x_{1}}^{\alpha}\partial_{x_{2}}\theta^{(2)}\|_{L^{2}}^{\frac{1}{2\alpha}}
 \|\widetilde{u}_{2}\|_{L^{2}}^{1-\frac{1}{2\alpha}}
 \|\Lambda_{x_{2}}^{\alpha}\widetilde{u}_{2}\|_{L^{2}}^{\frac{1}{2\alpha}}\nonumber\\
 &\leq C\|\widetilde{\theta}\|_{L^{2}}\|\nabla\theta^{(2)}\|_{L^{2}}
 ^{1-\frac{1}{2\alpha}}
 \|\Lambda_{x_{1}}^{\alpha}\nabla\theta^{(2)}\|_{L^{2}}^{\frac{1}{2\alpha}}
 \|\widetilde{\theta}\|_{L^{2}}^{1-\frac{1}{2\alpha}}
 \|\Lambda_{x_{1}}^{\alpha}\widetilde{\theta}\|_{L^{2}}^{\frac{1}{2\alpha}}
 \nonumber\\
 &\leq
 \frac{1}{8}\|\Lambda_{x_{1}}^{\alpha}\widetilde{\theta}\|_{L^{2}}^{2}+
 C\|\nabla\theta^{(2)}\|_{L^{2}}^{\frac{2(2\alpha-1)}{4\alpha-1}} (1+\|\Lambda_{x_{1}}^{\alpha}\nabla\theta^{(2)}\|_{L^{2}}^{2})
 \|\widetilde{\theta}\|_{L^{2}}^{2},\nonumber
 \end{align}
where we have used the fact due to Plancherel's Theorem and $\widetilde{u}_{2}=\mathcal {R}_{1}\widetilde{\theta}$
$$\|\Lambda_{x_{2}}^{\alpha}\widetilde{u}_{2}\|_{L^{2}}\leq C\|\Lambda_{x_{1}}^{\alpha}\widetilde{\theta}\|_{L^{2}}.$$
Similarly, for the case $\beta>\frac{1}{2}$, it implies
\begin{align}
\left|-\int_{\mathbb{R}^{2}}{\widetilde{u}_{1}\partial_{x_{1}}\theta^{(2)}
 \widetilde{\theta}\,dx}\right|&\leq C\|\widetilde{\theta}\|_{L^{2}}\|\partial_{x_{1}}\theta^{(2)}\|_{L^{2}}
 ^{1-\frac{1}{2\beta}}
 \|\Lambda_{x_{2}}^{\beta}\partial_{x_{1}}\theta^{(2)}\|_{L^{2}}^{\frac{1}{2\beta}}
 \|\widetilde{u}_{1}\|_{L^{2}}^{1-\frac{1}{2\beta}}
 \|\Lambda_{x_{1}}^{\beta}\widetilde{u}_{1}\|_{L^{2}}^{\frac{1}{2\beta}}\nonumber\\
 &\leq C\|\widetilde{\theta}\|_{L^{2}}\|\nabla\theta^{(2)}\|_{L^{2}}
 ^{1-\frac{1}{2\beta}}
 \|\Lambda_{x_{2}}^{\beta}\nabla\theta^{(2)}\|_{L^{2}}^{\frac{1}{2\beta}}
 \|\widetilde{\theta}\|_{L^{2}}^{1-\frac{1}{2\beta}}
 \|\Lambda_{x_{2}}^{\beta}\widetilde{\theta}\|_{L^{2}}^{\frac{1}{2\beta}}
 \nonumber\\
 &\leq
 \frac{1}{8}\|\Lambda_{x_{2}}^{\beta}\widetilde{\theta}\|_{L^{2}}^{2}+
 C\|\nabla\theta^{(2)}\|_{L^{2}}^{\frac{2(2\beta-1)}{4\beta-1}} (1+\|\Lambda_{x_{2}}^{\beta}\nabla\theta^{(2)}\|_{L^{2}}^{2})
 \|\widetilde{\theta}\|_{L^{2}}^{2},\nonumber
 \end{align}
\begin{align}
\left|-\int_{\mathbb{R}^{2}}{\widetilde{u}_{2}\partial_{x_{2}}\theta^{(2)}
 \widetilde{\theta}\,dx}\right|&\leq C\|\widetilde{\theta}\|_{L^{2}}\|\partial_{x_{2}}\theta^{(2)}\|_{L^{2}}
 ^{1-\frac{1}{2\beta}}
 \|\Lambda_{x_{1}}^{\beta}\partial_{x_{2}}\theta^{(2)}\|_{L^{2}}^{\frac{1}{2\beta}}
 \|\widetilde{u}_{2}\|_{L^{2}}^{1-\frac{1}{2\beta}}
 \|\Lambda_{x_{2}}^{\beta}\widetilde{u}_{2}\|_{L^{2}}^{\frac{1}{2\beta}}\nonumber\\
 &\leq C\|\widetilde{\theta}\|_{L^{2}}\|\nabla\theta^{(2)}\|_{L^{2}}
 ^{1-\frac{1}{2\beta}}
 \|\Lambda_{x_{2}}^{\beta}\nabla\theta^{(2)}\|_{L^{2}}^{\frac{1}{2\beta}}
 \|\widetilde{\theta}\|_{L^{2}}^{1-\frac{1}{2\beta}}
 \|\Lambda_{x_{2}}^{\beta}\widetilde{\theta}\|_{L^{2}}^{\frac{1}{2\beta}}
 \nonumber\\
 &\leq
 \frac{1}{8}\|\Lambda_{x_{2}}^{\beta}\widetilde{\theta}\|_{L^{2}}^{2}+
 C\|\nabla\theta^{(2)}\|_{L^{2}}^{\frac{2(2\beta-1)}{4\beta-1}} (1+\|\Lambda_{x_{2}}^{\beta}\nabla\theta^{(2)}\|_{L^{2}}^{2})
 \|\widetilde{\theta}\|_{L^{2}}^{2}.\nonumber
 \end{align}
Therefore, we obtain
\begin{align}&\label{uniqc341}
\frac{d}{dt}\|\widetilde{\theta}(t)\|_{L^{2}}^{2}+
 \|\Lambda_{x_{1}}^{\alpha}\widetilde{\theta}\|_{L^{2}}^{2}+
 \|\Lambda_{x_{2}}^{\beta}\widetilde{\theta}\|_{L^{2}}^{2}\nonumber\\
 &\leq C\|\nabla\theta^{(2)}\|_{L^{2}}^{\frac{2(2\alpha-1)}
 {4\alpha-1}}  (1+\|\Lambda_{x_{1}}^{\alpha}\nabla\theta^{(2)}\|_{L^{2}}^{2})
 \|\widetilde{\theta}\|_{L^{2}}^{2},\quad \mbox{for}\ \ \alpha>\frac{1}{2};
 \end{align}
 \begin{align}&\label{uniqc342}
\frac{d}{dt}\|\widetilde{\theta}(t)\|_{L^{2}}^{2}+
 \|\Lambda_{x_{1}}^{\alpha}\widetilde{\theta}\|_{L^{2}}^{2}+
 \|\Lambda_{x_{2}}^{\beta}\widetilde{\theta}\|_{L^{2}}^{2}\nonumber\\
 &\leq C\|\nabla\theta^{(2)}\|_{L^{2}}^{\frac{2(2\beta-1)}{4\beta-1}} (1
 +\|\Lambda_{x_{2}}^{\beta}\nabla\theta^{(2)}\|_{L^{2}}^{2})
 \|\widetilde{\theta}\|_{L^{2}}^{2},\quad \mbox{for}\ \ \beta>\frac{1}{2}.
 \end{align}
The above estimates \eqref{uniqc341} and \eqref{uniqc342} along with the Gronwall inequality give
$$\widetilde{\theta}(t)=0\quad \mbox{on}\ [0,\,T].$$
This yields the uniqueness of the solution on $[0,\,T]$.
Consequently, we complete the proof of Theorem \ref{ThSQG}.
\end{proof}

\vskip .2in
\appendix

\section{Besov spaces and several inequalities}\label{appd11}

In this section, we show some common notations about the Besov spaces and several inequalities.
Now let us begin with the Littlewood-Paley theory (see for instance \cite{BCD}).
We choose
some smooth radial non increasing function $\chi$ with values in $[0, 1]$ such that $\chi\in
C_{0}^{\infty}(\mathbb{R}^{n})$ is supported in the ball
$\mathcal{B}:=\{\xi\in \mathbb{R}^{n}, |\xi|\leq \frac{4}{3}\}$ and
and with value $1$ on $\{\xi\in \mathbb{R}^{n}, |\xi|\leq \frac{3}{4}\}$, then we set
$\varphi(\xi)=\chi\big(\frac{\xi}{2}\big)-\chi(\xi)$. One easily verifies that
${\varphi\in C_{0}^{\infty}(\mathbb{R}^{n})}$ is supported in the annulus
$\mathcal{C}:=\{\xi\in \mathbb{R}^{n}, \frac{3}{4}\leq |\xi|\leq
\frac{8}{3}\}$ and satisfy
$$\chi(\xi)+\sum_{j\geq0}\varphi(2^{-j}\xi)=1, \quad  \forall \xi\in \mathbb{R}^{n}.$$
Let $h=\mathcal{F}^{-1}(\varphi)$ and $\widetilde{h}=\mathcal{F}^{-1}(\chi)$, then we introduce the dyadic blocks $\Delta_{j}$ of our decomposition by setting
$$\Delta_{j}u=0,\ \ j\leq -2; \ \  \ \ \ \Delta_{-1}u=\chi(D)u=\int_{\mathbb{R}^{n}}{\widetilde{h}(y)u(x-y)\,dy};
$$
$$ \Delta_{j}u=\varphi(2^{-j}D)u=2^{jn}\int_{\mathbb{R}^{n}}{h(2^{j}y)u(x-y)\,dy},\ \ \forall j\in \mathbb{N}.
$$
We shall also use the
following low-frequency cut-off:
$$\ S_{j}u=\chi(2^{-j}D)u=\sum_{-1\leq k\leq j-1} \Delta_{k}u=2^{jn}\int_{\mathbb{R}^{n}}{\widetilde{h}(2^{j}y)u(x-y)\,dy},\ \ \forall j\in \mathbb{N}.$$
Meanwhile, we define the homogeneous dyadic blocks as
$$\dot{\Delta}_{j}u=\varphi(2^{-j}D)u =2^{jn}\int_{\mathbb{R}^{n}}{h(2^{j}y)u(x-y)\,dy},\ \ \forall j\in \mathbb{Z}.$$
\vskip .1in

We denote the function spaces of rapidly decreasing functions by $S(\mathbb{R}^{n})$, tempered
distributions by $S'(\mathbb{R}^{n})$, and polynomials by $\mathcal{P}(\mathbb{R}^{n})$.
Let us now recall the definition of homogeneous and inhomogeneous Besov spaces through
the dyadic decomposition.
\begin{define}
Let $s\in \mathbb{R}, (p,r)\in[1,+\infty]^{2}$. The homogeneous
Besov space $\dot{B}_{p,r}^{s}$ is defined as a space of $f\in
S'(\mathbb{R}^{n})/\mathcal{P}(\mathbb{R}^{n})$ such that
$$ \dot{B}_{p,r}^{s}=\{f\in S'(\mathbb{R}^{n})/\mathcal{P}(\mathbb{R}^{n});  \|f\|_{\dot{B}_{p,r}^{s}}<\infty\},$$
where
\begin{equation}\nonumber
 \|f\|_{\dot{B}_{p,r}^{s}}=\left\{\aligned
&\Big(\sum_{j\in \mathbb{Z}}2^{jrs}\|\dot{\Delta}_{j}f\|_{L^{p}}^{r}\Big)^{\frac{1}{r}}, \quad \forall \ r<\infty,\\
&\sup_{j\in \mathbb{Z}}
2^{js}\|\dot{\Delta}_{j}f\|_{L^{p}}, \quad \forall \ r=\infty.\\
\endaligned\right.
\end{equation}
\end{define}

\begin{define}
Let $s\in \mathbb{R}, (p,r)\in[1,+\infty]^{2}$. The inhomogeneous
Besov space $B_{p,r}^{s}$ is defined as a space of $f\in
S'(\mathbb{R}^{n})$ such that
$$ B_{p,r}^{s}=\{f\in S'(\mathbb{R}^{n});  \|f\|_{B_{p,r}^{s}}<\infty\},$$
where
\begin{equation}\nonumber
 \|f\|_{B_{p,r}^{s}}=\left\{\aligned
&\Big(\sum_{j\geq-1}2^{jrs}\|\Delta_{j}f\|_{L^{p}}^{r}\Big)^{\frac{1}{r}}, \quad \forall \ r<\infty,\\
&\sup_{j\geq-1}
2^{js}\|\Delta_{j}f\|_{L^{p}}, \quad \forall \ r=\infty.\\
\endaligned\right.
\end{equation}
\end{define}

\vskip .1in
Next, we introduce the Bernstein lemma which is fundamental in the analysis involving Besov spaces.

\begin{lemma} [see \cite{BCD}]\label{lcfgyrw}
 Let $k\geq 0, 1\leq a\leq b\leq\infty$. Assume that
$$
\mbox{supp}\, \widehat{f} \subset \{\xi\in \mathbb{R}^n: \,\, |\xi|
\lesssim  2^j \},
$$
for some integer $j$,  then there exists a constant $C_1$ such that
$$
\|\Lambda^{k}f\|_{L^b} \le C_1\, 2^{j k  +
jn(\frac{1}{a}-\frac{1}{b})} \|f\|_{L^a}.
$$
If $f$ satisfies
\begin{equation*}
\mbox{supp}\, \widehat{f} \subset \{\xi\in \mathbb{R}^n: \,\,|\xi|
\thickapprox 2^j \}
\end{equation*}
for some integer $j$, then
$$
C_1\, 2^{ j k} \|f\|_{L^b } \le \|\Lambda^{k} f\|_{L^b } \le
C_2\, 2^{  j k + j n(\frac{1}{a}-\frac{1}{b})} \|f\|_{L^a},
$$
where $C_1$ and $C_2$ are constants depending on $\alpha,\,a$ and $b$
only.
\end{lemma}

\vskip .2in
\begin{center}{An alternative proof of (\ref{dpinty})}
\end{center}
Here we give the proof of the anisotropic interpolation inequality (\ref{dpinty}).
Before proving this inequality, we point out that the anisotropic interpolation inequality established in \cite[Lemma A.2]{DP3} is a direct consequence of the inequality (\ref{dpinty}).
By means of the following one-dimensional Sobolev inequality
\begin{eqnarray}
\|g\|_{L_{x_{1}}^{\infty}(\mathbb{R})}\leq C\|g\|_{L_{x_{1}}^{2}(\mathbb{R})}^{\frac{2\gamma-1}{2\gamma}} \|\Lambda_{x_{1}}^{\gamma}g\|_{L_{x_{1}}^{2}(\mathbb{R})}^{ \frac{1}{2\gamma}},\quad \gamma>\frac{1}{2},\nonumber
\end{eqnarray}
it is clear that by choosing the intermediate variables $\varepsilon_{1},\,\varepsilon_{2}>\frac{1}{2}$ and noticing $\delta_{2}>\frac{1}{2}$
\begin{align}
\|h(x_{1},x_{2})\|_{L^{\infty}}=&
\|h(x_{1},x_{2})\|_{L_{x_{2}}^{\infty}L_{x_{1}}^{\infty}}\nonumber\\ \leq& C\|h(x_{1},x_{2})\|_{L_{x_{2}}^{\infty}L_{x_{1}}^{2}}^{\frac{2\gamma-1}{2\varepsilon_{1}}} \|\Lambda_{x_{1}}^{\varepsilon_{1}}h(x_{1},x_{2})\|_{L_{x_{2}}^{\infty}L_{x_{1}}^{2}}^{ \frac{1}{2\varepsilon_{1}}}\nonumber\\
 \leq& C\|h(x_{1},x_{2})\|_{L_{x_{1}}^{2}L_{x_{2}}^{\infty}}^{\frac{2\varepsilon_{1}-1}{2\varepsilon_{1}}} \|\Lambda_{x_{1}}^{\varepsilon_{1}}h(x_{1},x_{2})\|_{L_{x_{1}}^{2}L_{x_{2}}^{\infty}}^{ \frac{1}{2\varepsilon_{1}}}\nonumber\\
 \leq& C\|h(x_{1},x_{2})\|_{L_{x_{1}}^{2}L_{x_{2}}^{2}}^{\frac{2\varepsilon_{1}-1}{2\varepsilon_{1}}
\frac{2\delta_{2}-1}{2\delta_{2}}}
\|\Lambda_{x_{2}}^{\delta_{2}}h(x_{1},x_{2})\|_{L_{x_{1}}^{2}L_{x_{2}}^{2}}^{\frac{2\varepsilon_{1}-1}{2\varepsilon_{1}}
\frac{1}{2\delta_{2}}}
\nonumber\\& \times
\|\Lambda_{x_{1}}^{\varepsilon_{1}}h(x_{1},x_{2})\|_{L_{x_{1}}^{2}L_{x_{2}}^{2}}^{ \frac{1}{2\varepsilon_{1}}\frac{2\varepsilon_{2}-1}{2\varepsilon_{2}}}
\|\Lambda_{x_{2}}^{\varepsilon_{2}}\Lambda_{x_{1}}^{\varepsilon_{1}}h(x_{1},x_{2})\|_{L_{x_{1}}^{2}L_{x_{2}}^{2}}^{ \frac{1}{2\varepsilon_{1}}\frac{1}{2\varepsilon_{2}}}
\nonumber\\
 =& C\|h(x_{1},x_{2})\|_{L^{2}}^{\frac{2\varepsilon_{1}-1}{2\varepsilon_{1}}
\frac{2\delta_{2}-1}{2\delta_{2}}}
\|\Lambda_{x_{2}}^{\delta_{2}}h(x_{1},x_{2})\|_{L^{2}}^{\frac{2\varepsilon_{1}-1}{2\varepsilon_{1}}
\frac{1}{2\delta_{2}}}
\nonumber\\& \times
\|\Lambda_{x_{1}}^{\varepsilon_{1}}h(x_{1},x_{2})\|_{L^{2}}^{ \frac{1}{2\varepsilon_{1}}\frac{2\varepsilon_{2}-1}{2\varepsilon_{2}}}
\|\Lambda_{x_{2}}^{\varepsilon_{2}}\Lambda_{x_{1}}^{\varepsilon_{1}}h(x_{1},x_{2})
\|_{L^{2}}^{ \frac{1}{2\varepsilon_{1}}\frac{1}{2\varepsilon_{2}}}.\nonumber
\end{align}
Now if we further assume $\varepsilon_{1}\leq \delta_{1},\,\varepsilon_{2}\leq \delta_{2}$ and $\frac{\varepsilon_{1}}{\delta_{1}}+\frac{\varepsilon_{2}}{\delta_{2}}\leq 1$, then we obtain
\begin{align}
\|\Lambda_{x_{1}}^{\varepsilon_{1}}h(x_{1},x_{2})\|_{L^{2}} \leq \|h(x_{1},x_{2})\|_{L^{2}}^{\frac{\delta_{1}-\varepsilon_{1}}{\delta_{1}}}
\|\Lambda_{x_{1}}^{\delta_{1}}h(x_{1},x_{2})\|_{L^{2}}^{\frac{\varepsilon_{1}}
{\delta_{1}}},\nonumber
\end{align}
and
\begin{align}
\|\Lambda_{x_{2}}^{\varepsilon_{2}}\Lambda_{x_{1}}^{\varepsilon_{1}}h(x_{1},x_{2})
\|_{L^{2}}
 =&\Big(\int_{\mathbb{R}^{2}}{|\xi_{2}|^{2\varepsilon_{2}}|\xi_{1}|^{2\varepsilon_{1}}
|\widehat{h}(\xi)|^{2}\,d\xi}\Big)^{\frac{1}{2}}\nonumber\\
 =&\Big(\int_{\mathbb{R}^{2}}{\big(|\xi_{2}|^{2\varepsilon_{2}}
|\widehat{h}(\xi)|^{\frac{2\varepsilon_{2}}{\delta_{2}}}\big)
\big(|\xi_{1}|^{2\varepsilon_{1}}|\widehat{h}(\xi)|
^{\frac{2\varepsilon_{1}}{\delta_{1}}}}\big)
|\widehat{h}(\xi)|^{2-\frac{2\varepsilon_{2}}
{\delta_{2}}-\frac{2\varepsilon_{1}}{\delta_{1}} }\,d\xi\Big)^{\frac{1}{2}}
\nonumber\\
 \leq&C\Big(\int_{\mathbb{R}^{2}}{|\xi_{2}|^{2\delta_{2}}
|\widehat{h}(\xi)|^{2}\,d\xi}\Big)^{\frac{\varepsilon_{2}}{2\delta_{2}}}
\Big(\int_{\mathbb{R}^{2}}{|\xi_{1}|^{2\delta_{1}}
|\widehat{h}(\xi)|^{2}\,d\xi}\Big)^{\frac{\varepsilon_{1}}{2\delta_{1}}}\nonumber\\&
\times
\Big(\int_{\mathbb{R}^{2}}{
|\widehat{h}(\xi)|^{2}\,d\xi}\Big)^{\frac{1}{2}-\frac{\varepsilon_{1}}{2\delta_{1}}
-\frac{\varepsilon_{2}}{2\delta_{2}}}
\nonumber\\
 =&C
\|\Lambda_{x_{2}}^{\delta_{2}}h(x_{1},x_{2})\|_{L^{2}}
^{\frac{\varepsilon_{2}}{\delta_{2}}}
\|\Lambda_{x_{1}}^{\delta_{1}}h(x_{1},x_{2})\|_{L^{2}}
^{\frac{\varepsilon_{1}}{\delta_{1}}}
\|h(x_{1},x_{2})\|_{L^{2}}^{1-\frac{\varepsilon_{1}}{\delta_{1}}
-\frac{\varepsilon_{2}}{\delta_{2}}}.\nonumber
\end{align}
Combining the above estimates, it yields
\begin{align}
\|h(x_{1},x_{2})\|_{L^{\infty}} \leq& C \|h(x_{1},x_{2})\|_{L^{2}}^{\frac{2\varepsilon_{1}-1}{2\varepsilon_{1}}
\frac{2\delta_{2}-1}{2\delta_{2}}+\frac{1}{2\varepsilon_{1}}
\frac{2\varepsilon_{2}-1}{2\varepsilon_{2}}
\frac{\delta_{1}-\varepsilon_{1}}{\delta_{1}}+(1-\frac{\varepsilon_{1}}{\delta_{1}}
-\frac{\varepsilon_{2}}{\delta_{2}})\frac{1}{2\varepsilon_{1}}
\frac{1}{2\varepsilon_{2}}}\nonumber\\& \times
\|\Lambda_{x_{1}}^{\delta_{1}}h(x_{1},x_{2})\|_{L^{2}}^{\frac{1}{2\varepsilon_{1}}
\frac{2\varepsilon_{2}-1}{2\varepsilon_{2}}\frac{\varepsilon_{1}}{\delta_{1}}+
\frac{\varepsilon_{1}}{\delta_{1}}\frac{1}{2\varepsilon_{1}}\frac{1}
{2\varepsilon_{2}}}
\|\Lambda_{x_{2}}^{\delta_{2}}h(x_{1},x_{2})\|_{L^{2}}^{\frac{2\varepsilon_{1}-1}{2\varepsilon_{1}}
\frac{1}{2\delta_{2}}+\frac{\varepsilon_{2}}{\delta_{2}}\frac{1}{2\varepsilon_{1}}\frac{1}{2\varepsilon_{2}}}
\nonumber\\
 =& C \|h(x_{1},x_{2})\|_{L^{2}}^{1-
\frac{1}{2\delta_{1}}-\frac{1}{2\delta_{2}}}
\|\Lambda_{x_{1}}^{\delta_{1}}h(x_{1},x_{2})\|_{L^{2}}^{\frac{1}{2\delta_{1}}}
\|\Lambda_{x_{2}}^{\delta_{2}}h(x_{1},x_{2})\|_{L^{2}}^{\frac{1}{2\delta_{2}}},
\nonumber
\end{align}
where the intermediate variables $\varepsilon_{1}$ and $\varepsilon_{2}$ should be satisfied $\frac{1}{2}<\varepsilon_{1}\leq \delta_{1},\,\frac{1}{2}<\varepsilon_{2}\leq \delta_{2}$ and $\frac{\varepsilon_{1}}{\delta_{1}}+\frac{\varepsilon_{2}}{\delta_{2}}\leq 1$. Thus, it leads to
$\frac{\varepsilon_{1}}{\delta_{1}}>\frac{1}{2\delta_{1}}$ and $\frac{\varepsilon_{2}}{\delta_{2}}>\frac{1}{2\delta_{2}}$, which together with the condition $\frac{\varepsilon_{1}}{\delta_{1}}+\frac{\varepsilon_{2}}{\delta_{2}}\leq 1$ implies
\begin{eqnarray}\label{atasdgffdag}\frac{1}{2\delta_{1}}+\frac{1}{2\delta_{2}}<1\quad\mbox{or}\quad \frac{1}{\delta_{1}}+\frac{1}{\delta_{2}}<2.\end{eqnarray}
The above argument implies that  the intermediate variables $\varepsilon_{1}$ and $\varepsilon_{2}$ do exist as long as (\ref{atasdgffdag}) holds true. This completes the proof of the inequality (\ref{dpinty}).

\vskip .3in

\section{Local well-posedness theory of (\ref{SQG})}\label{appd12}

For the sake of completeness, this appendix presents the local
existence and uniqueness result for (\ref{SQG}) with initial
data $\theta_{0}\in H^{s}(\mathbb{R}^{2})$ for $s\geq2$.
More precisely, in this appendix, we prove the following local well-posedness result.
\begin{Pros}\label{P1}
Let $\theta_{0}\in H^{s}(\mathbb{R}^{2})$ with $s\geq2$ and $\alpha,\,\beta>0$. Then there exists a positive time $T$ depending on $\|\theta_{0}\|_{H^{s}}$ such that (\ref{SQG}) admits a unique solution $\theta\in C([0, T];H^{s}(\mathbb{R}^{2}))$.
\end{Pros}

The proof of Proposition \ref{P1} can be performed by the method similar to Chapter 3 in \cite{Majdab}.
To prove Proposition \ref{P1}, the main step is to approximate  (\ref{SQG}) in order to easily produce a family of global
smooth solutions. In order to do this, we may  for instance make use
of the Friedrichs method. Now we define the spectral cut-off as
follows
$$\widehat{\mathcal {J}_{N}f}(\xi)=\chi_{B(0,N)}(\xi)\widehat{f}(\xi),$$
where $N>0, \,B(0,N)=\{\xi \in \mathbb{R}^{2}| \, |\xi|\leq N\}$ and
$\chi_{B(0,N)}$ is the characteristic function on $B(0,N)$. Also we
define
$$L^{2}_{N}\triangleq\{f\in L^{2}(\mathbb{R}^{2})|\, \mbox{ supp }\,\widehat{f}\subset B(0,N)\}.$$

\begin{proof}[{Proof of Proposition \ref{P1}}]
The first step is to consider the following approximate system
of (\ref{SQG}),
\begin{equation}\label{AMHD}
\left\{\aligned
&\partial_{t}\theta^{N}+\mathcal {J}_{N} (\mathcal {J}_{N}u^{N} \cdot \nabla \mathcal {J}_{N}\theta^{N})+ \Lambda_{x_{1}}^{2\alpha}\mathcal {J}_{N}\theta^{N}+\Lambda_{x_{2}}^{2\beta}\mathcal {J}_{N}\theta^{N}=0,\\
&u^{N}=\mathcal {R}^{\perp}\theta^{N},\\
&\theta^{N}(x,0)=\mathcal {J}_{N}\theta_{0}(x).
\endaligned\right.
\end{equation}
Using the Cauchy-Lipschitz theorem (Picard's Theorem, see \cite{Majdab}), we can find that for any fixed $N$, there exists a unique local solution $\theta^{N}$ on $[0,\,T_{N})$ in the functional setting $L^{2}_{N}$ with $T_{N}=T(N,\theta_{0})$. Due to $\mathcal {J}_{N}^{2}=\mathcal {J}_{N}$, we find that $\mathcal
 {J}_{N}\theta^{N}$ is also a solution to
(\ref{AMHD}) with the same initial data. According to the uniqueness,
we have
$$ \mathcal {J}_{N}\theta^{N}=\theta^{N}.$$
Consequently the approximate system (\ref{AMHD}) reduces to
\begin{equation}\label{AMHD1}
\left\{\aligned
&\partial_{t}\theta^{N}+\mathcal {J}_{N} ( u^{N} \cdot \nabla \theta^{N})+ \Lambda_{x_{1}}^{2\alpha} \theta^{N}+\Lambda_{x_{2}}^{2\beta} \theta^{N}=0,\\
&u^{N}=\mathcal {R}^{\perp}\theta^{N},\\
&\theta^{N}(x,0)=\mathcal {J}_{N}\theta_{0}(x).
\endaligned\right.
\end{equation}
By the basic energy estimate, we conclude that $\theta^{N}$ of \eqref{AMHD1} satisfies
\begin{align}
\|\theta^{N}(t)\|_{L^{2}}^{2}+2\int_{0}^{t}{
(\|\Lambda_{x_{1}}^{\alpha}\theta^{N}(\tau)\|_{L^{2}}^{2}
+\|\Lambda_{x_{2}}^{\beta}\theta^{N}(\tau)\|_{L^{2}}^{2})\,d\tau}\leq
\|\theta_{0}\|_{L^{2}}^{2}.\nonumber
\end{align}
Hence, the local solution can be extended into a global
one, by the standard Picard Extension Theorem (cf. \cite{Majdab}).
Moreover, the $H^s$-estimate allows us to derive
\begin{align}&\label{A12}
 \frac{d}{dt}\|\theta^{N}(t)\|_{H^{s}}^{2}
+\|\Lambda_{x_{1}}^{\alpha}\theta^{N}\|_{H^{s}}^{2}
+\|\Lambda_{x_{2}}^{\beta}\theta^{N}\|_{H^{s}}^{2}\nonumber\\
&\leq C(\|\nabla u^{N}\|_{L^{\infty}}+\|\nabla \theta^{N}\|_{L^{\infty}})
\|\theta^{N}\|_{H^{s}}^{2}\nonumber\\
&\leq C\|\theta^{N}\|_{H^{s}}^{3-
\frac{1}{2(1+\alpha)}-\frac{1}{2(1+\beta)}}
\|\Lambda_{x_{1}}^{\alpha}\theta^{N}\|_{H^{s}}^{\frac{1}{2(1+\alpha)}}
\|\Lambda_{x_{2}}^{\beta}\theta^{N}\|_{H^{s}}^{\frac{1}{2(1+\beta)}}\nonumber\\&\leq
\frac{1}{2}\|\Lambda_{x_{1}}^{\alpha}\theta^{N}\|_{H^{s}}^{2}
+\frac{1}{2}\|\Lambda_{x_{2}}^{\beta}\theta^{N}\|_{H^{s}}^{2}+
C\|\theta^{N}\|_{H^{s}}^{\frac{12(1+\alpha)(1+\beta)-2(2+\alpha+\beta)}
{4(1+\alpha)(1+\beta)-(2+\alpha+\beta)}},
\end{align}
where we have used the following facts (see \eqref{yrtew1} and \eqref{yrtew2})
\begin{align}\label{sdtf11}
\|\nabla \theta^{N}\|_{L^{\infty}} \leq&
C\|\nabla \theta^{N}\|_{L^{2}}^{1-
\frac{1}{2(1+\alpha)}-\frac{1}{2(1+\beta)}}
\|\Lambda_{x_{1}}^{\alpha}\Delta \theta^{N}\|_{L^{2}}^{\frac{1}{2(1+\alpha)}}
\|\Lambda_{x_{2}}^{\beta}\Delta \theta^{N}\|_{L^{2}}^{\frac{1}{2(1+\beta)}}
\nonumber\\ \leq&
C\|\theta^{N}\|_{H^{s}}^{1-
\frac{1}{2(1+\alpha)}-\frac{1}{2(1+\beta)}}
\|\Lambda_{x_{1}}^{\alpha}\theta^{N}\|_{H^{s}}^{\frac{1}{2(1+\alpha)}}
\|\Lambda_{x_{2}}^{\beta}\theta^{N}\|_{H^{s}}^{\frac{1}{2(1+\beta)}},
\end{align}
\begin{align}\label{sdtf12}
\|\nabla u^{N}\|_{L^{\infty}}
 \leq& C\|\nabla \theta^{N}\|_{L^{2}}^{1-
\frac{1}{2(1+\alpha)}-\frac{1}{2(1+\beta)}}
\|\Lambda_{x_{1}}^{\alpha}\Delta \theta^{N}\|_{L^{2}}^{\frac{1}{2(1+\alpha)}}
\|\Lambda_{x_{2}}^{\beta}\Delta \theta^{N}\|_{L^{2}}^{\frac{1}{2(1+\beta)}}
\nonumber\\ \leq&
C\|\theta^{N}\|_{H^{s}}^{1-
\frac{1}{2(1+\alpha)}-\frac{1}{2(1+\beta)}}
\|\Lambda_{x_{1}}^{\alpha}\theta^{N}\|_{H^{s}}^{\frac{1}{2(1+\alpha)}}
\|\Lambda_{x_{2}}^{\beta}\theta^{N}\|_{H^{s}}^{\frac{1}{2(1+\beta)}}.
\end{align}
Denoting
$$\gamma:=\frac{8(1+\alpha)(1+\beta)-(2+\alpha+\beta)}
{4(1+\alpha)(1+\beta)-(2+\alpha+\beta)}>1,$$
we thus get from (\ref{A12}) that
\begin{align}
 \frac{d}{dt}\|\theta^{N}(t)\|_{H^{s}} \leq \widetilde{C} \|\theta^{N}\|_{H^{s}}^{\gamma},\nonumber
\end{align}
where $\widetilde{C}>0$ is an absolute constant.
One observes that for all $N$
$$
\sup_{0\leq t\leq T}\|\theta^{N}(t)\|_{H^{s}}
 \leq \frac{\|\theta_{0}\|_{H^{s}}}{\Big[1-(\gamma-1)\widetilde{C}T\|\theta_{0}
 \|_{H^{s}}\Big]^{\frac{1}{\gamma-1}}},\qquad T<\frac{1}{(\gamma-1)\widetilde{C}
\|\theta_{0}\|_{H^{s}}}.$$
Therefore, $\theta^{N}$ is uniformly bounded in $C([0, T]; H^{s})$ with $s\geq2$.
We may deduce that
$$ \partial_{t}\theta^{N}\in L_{t}^{\infty}
([0, T]);\,H_{x}^{-\sigma}(\mathbb{R}^{2})\quad \mbox{for some} \,\,
\sigma\geq 2.$$
Since the embedding $L^{2}\hookrightarrow H^{-\sigma}$ is locally compact, the well-known Aubin-Lions argument allows
us to conclude that, up to extraction, subsequence
$\{\theta^{N}\}_{N\in\mathbb{N}}$ satisfies
$$ \|\theta^{N}-\theta^{N'}\|_{L^{2}}\rightarrow0,\quad as\quad N,\,\,N'\rightarrow\infty.$$
Thanks to the interpolation ($\|f\|_{H^{s'}}\leq C
\|f\|_{L^{2}}^{1-\frac{s'}{s}}\|f\|_{H^{s}}^{\frac{s'}{s}}$ for any
$s'<s$), we deduce that
$$ \|\theta^{N}-\theta^{N'}\|_{H^{s'}}\rightarrow0,\quad
as\quad N,\,\,N'\rightarrow\infty.$$
This implies the strong convergence limit $\theta\in C([0, T];
H^{s'})$ for any $s'<s$.
Therefore, this is enough for us to show that up to
extraction, sequence $\{\theta^{N}\}_{N\in\mathbb{N}}$ has a limit
$\theta$ satisfying
\begin{equation}\label{AMHD11}
\left\{\aligned
&\partial_{t}\theta+(u\cdot \nabla)\theta+ \Lambda_{x_{1}}^{2\alpha} \theta +\Lambda_{x_{2}}^{2\beta} \theta =0,\\
&u=\mathcal {R}^{\perp}\theta,\\
& \theta(x,0)=\theta_{0}(x).
\endaligned\right.
\end{equation}
Moreover, we have $\theta\in L^{\infty}([0, T];
H^{s}(\mathbb{R}^{2}))$. Finally, we begin to show the time continuity of the solution in $H^{s}(\mathbb{R}^{2})$, namely,
 \begin{align}\label{asaqwwe56}
 \theta\in C([0, T]; H^{s}(\mathbb{R}^{2})).
  \end{align}
Based on the above argument, we first have
 \begin{align} \sup_{0\leq t\leq T}\|\theta\|_{H^{s}}<\infty.
 \nonumber
 \end{align}
By the equivalent norm, it yields
 \begin{align}\label{t2.03}
 \|\theta(t_{1})-\theta(t_{2})\|_{H^{s}}=\Big\{(\sum_{k<N}+ \sum_{k\geq N})
 (2^{ks}\|\Delta_{k}\theta(t_{1})-\Delta_{k}\theta(t_{2})\|_{L^{2}})^{2}
 \Big\}^{\frac{1}{2}}.
 \end{align}
Let $\varepsilon>0$ be arbitrarily small. Due to $\theta\in
L^{\infty}([0, T]; H^{s}(\mathbb{R}^{2}))$, there exists an integer
$M=M(\varepsilon)>0$ such that
 \begin{align}\label{t2.04}
 \Big\{\sum_{k\geq M}
 (2^{ks}\|\Delta_{k}\theta(t_{1})-\Delta_{k}\theta(t_{2})\|_{L^{2}})^{2}
 \Big\}^{\frac{1}{2}}<\frac{\varepsilon}{2}.
 \end{align}
By the first equation of \eqref{AMHD11}, it yields
\begin{align}
\Delta_{k}\theta(t_{1})-\Delta_{k}\theta(t_{2}) =&\int_{t_{1}}^{t_{2}}{\frac{d}{d\tau}
\Delta_{k}\theta(\tau)\,d\tau} 
 = -\int_{t_{1}}^{t_{2}}{ \Delta_{k} [(u\cdot\nabla)\theta+ \Lambda_{x_{1}}^{2\alpha} \theta +\Lambda_{x_{2}}^{2\beta} \theta](\tau)\,d\tau}.\nonumber
\end{align}
Thus, using the Bernstein lemma and $s>1$, we conclude that
\begin{align}
& \sum_{k<M}
 2^{2ks}\|\Delta_{k}\theta(t_{1})-\Delta_{k}\theta(t_{2})\|_{L^{2}}^{2}\nonumber\\
 &= \sum_{k<M}
 2^{2ks}\Big(\Big\|\int_{t_{1}}^{t_{2}}{ \Delta_{k} [ (u\cdot\nabla) \theta+ \Lambda_{x_{1}}^{2\alpha} \theta +\Lambda_{x_{2}}^{2\beta} \theta](\tau)\,d\tau}\Big\|_{L^{2}}\Big)^{2}\nonumber\\
&\leq \sum_{k<M}
 2^{2ks}\Big(\int_{t_{1}}^{t_{2}}{ \|\Delta_{k}[(u\cdot\nabla) \theta+ \Lambda_{x_{1}}^{2\alpha} \theta +\Lambda_{x_{2}}^{2\beta} \theta]\|_{L^{2}}(\tau)\,d\tau}\Big)^{2}
\nonumber\\
&\leq \sum_{k<M}
 2^{2ks}\Big(\int_{t_{1}}^{t_{2}}{ [\|\Delta_{k}\nabla\cdot(u\otimes \theta)\|_{L^{2}}+
\| \Delta_{k}\Lambda_{x_{1}}^{2\alpha} \theta\|_{L^{2}}
+\| \Delta_{k}\Lambda_{x_{2}}^{2\beta} \theta\|_{L^{2}}](\tau)\,d\tau}\Big)^{2} \nonumber\\
&\leq\sum_{k<M}
 2^{2k}\Big(\int_{t_{1}}^{t_{2}}{  2^{ks}\|\Delta_{k} (u\otimes \theta) (\tau)\|_{L^{2}} \,d\tau}\Big)^{2}
\nonumber\\
&\quad+ \sum_{k<M}
 2^{4\alpha k}\Big(\int_{t_{1}}^{t_{2}}{
2^{ks}\|\Delta_{k}\theta(\tau)\|_{L^{2}}\,d\tau}\Big)^{2}
+ \sum_{k<M}
 2^{4\beta k}\Big(\int_{t_{1}}^{t_{2}}{
2^{ks}\|\Delta_{k}\theta(\tau)\|_{L^{2}}\,d\tau}\Big)^{2}
\nonumber\\
&\leq\sum_{k<M}
 2^{2k}\Big(\int_{t_{1}}^{t_{2}}{ \|u\theta(\tau)\|_{H^{s}} \,d\tau}\Big)^{2}
+ \sum_{k<M}
 2^{4\alpha k}\Big(\int_{t_{1}}^{t_{2}}{
\|\theta(\tau)\|_{H^{s}}\,d\tau}\Big)^{2}
\nonumber\\
&\quad+ \sum_{k<M}
 2^{4\beta k}\Big(\int_{t_{1}}^{t_{2}}{
\|\theta(\tau)\|_{H^{s}}\,d\tau}\Big)^{2}
\nonumber\\
&\leq C\sum_{k<M}
 2^{2k} \|u\theta\|_{L_{t}^{\infty}H^{s}}^{2}|t_{1}-t_{2}|^{2}+
 C\sum_{k<M}
 (2^{4\alpha k}+2^{4\beta k})\|\theta\|_{L_{t}^{\infty}H^{s}} |t_{1}-t_{2}|^{2}
 \nonumber\\
&\leq C\sum_{k<M}
 2^{2k}|t_{1}-t_{2}|^{2}\Big(\|u\|_{L_{t}^{\infty}H^{s}}^{2}
 \|\theta\|_{L_{t}^{\infty}L^{\infty}}^{2}+\|\theta\|_{L_{t}^{\infty}H^{s}}^{2}
 \|u\|_{L_{t}^{\infty}L^{\infty}}^{2}\Big)\nonumber\\&\quad
 +C\sum_{k<M}
 (2^{4\alpha k}+2^{4\beta k})\|\theta\|_{L_{t}^{\infty}H^{s}} |t_{1}-t_{2}|^{2}\nonumber\\
&\leq  C
 (2^{2M}+2^{4\alpha M}+2^{4\beta M})|t_{1}-t_{2}|^{2}\Big(\|\theta\|_{L_{t}^{\infty}H^{s}}^{4}+
 \|\theta\|_{L_{t}^{\infty}H^{s}} \Big).\nonumber
\end{align}
Therefore, the following one holds true
 \begin{align}\label{t2.07}
 \Big\{\sum_{k< M}
 (2^{ks}\|\Delta_{k}\theta(t_{1})-\Delta_{k}\theta(t_{2})\|_{L^{2}})^{2}
 \Big\}^{\frac{1}{2}}<\frac{\varepsilon}{2}
 \end{align}
as long as $|t_{1}-t_{2}|$ is small enough. Combining (\ref{t2.03}), (\ref{t2.04}) with (\ref{t2.07}) yields \eqref{asaqwwe56}, namely $\theta\in C([0, T]; H^{s}(\mathbb{R}^{2})$. Thanks to \eqref{sdtf11} and \eqref{sdtf12}, we have
\begin{eqnarray*}  \int_{0}^{t}(\|\nabla u(\tau)\|_{L^{\infty}}+\|\nabla \theta(\tau)\|_{L^{\infty}})\,d\tau<\infty,
\end{eqnarray*}
which leads to the uniqueness immediately. Therefore, this completes the proof of Proposition \ref{P1}.
\end{proof}

\vskip .2in
\section*{Acknowledgements}
The author is supported by the National Natural Science Foundation of China (No. 11701232) and the Natural Science Foundation of Jiangsu Province (No. BK20170224).

\vskip .2in


\begin{thebibliography}{00} \frenchspacing

\bibitem{Abidi1}
H. Abidi, T. Hmidi, \emph{On the global well-posedness of the critical quasi-geostrophic equation}, SIAM J. Math. Anal. 40 (2008), 167--185.


\bibitem{BCD}
H. Bahouri, J.-Y. Chemin, R. Danchin, {\rm Fourier Analysis and Nonlinear Partial Differential Equations}, Grundlehren Math. Wiss. , vol. 343, Springer-Verlag, Berlin, Heidelberg,
2011.


\bibitem{CV1}
L. Caffarelli, A. Vasseur, \emph{Drift diffusion equations with fractional
diffusion and the quasi-geostrophic equation}, Ann. of Math. 171
(2010), 1903--1930.

\bibitem{CLT}
C. Cao, J. Li, E. Titi, \emph{Global well-posedness of the 3D primitive equations with horizontal viscosity and vertical diffusivity}, arXiv:1703.02512v1 [math.AP] 7 Mar 2017.

\bibitem{Chae20031}
D. Chae, J. Lee, \emph{Global well-posedness in the super-critical dissipative quasi-geostrophic equations}, Comm. Math. Phys. 233 (2003), 297--311.

\bibitem{CCCGW}
D. Chae, P. Constantin, D. C\'{o}rdoba, F. Gancedo, J. Wu,
\emph{Generalized surface quasi-geostrophic equations with singular
velocities}, Comm. Pure Appl. Math. 65 (2012), 1037--1066.

\bibitem{CCW111}
D. Chae, P. Constantin, J. Wu, \emph{Inviscid models generalizing the 2D
Euler and the surface quasi-geostrophic equations}, Arch. Ration.
Mech. Anal. 202 (2011), 35--62.

\bibitem{CCCF05}
D. Chae, A. C\'{o}rdoba, D. C\'{o}rdoba, M. Fontelos,
\emph{Finite time singularities in a 1D model of the quasi-geostrophic equation}, Adv. Math. 194 (2005), 203--223.

\bibitem{CMZ}
Q. Chen, C. Miao, Z. Zhang, \emph{A new Bernstein's inequality and the 2D
dissipative quasigeostrophic equation}, Comm. Math. Phys. 271
(2007), 821--838.

\bibitem{CCW}
P. Constantin, D. C\'{o}rdoba, J. Wu, \emph{On the critical
dissipative quasi-geostrophic equation}, Indiana Univ. Math. J. 50
(2001), 97--107.

\bibitem{CMT}
P. Constantin, A. Majda, E. Tabak, \emph{Formation of strong fronts in the 2-D quasi-geostrophic thermal active scalar}, Nonlinearity 7 (1994), 1495--1533.

\bibitem{CV}
P. Constantin, V. Vicol, \emph{Nonlinear maximum principles for dissipative linear nonlocal operators and applications}, Geom. Funct. Anal. 22 (2012), 1289--1321.

\bibitem{CW3111}
P. Constantin, J. Wu, \emph{Behavior of solutions of 2D quasi-geostrophic
equations}, SIAM J. Math. Anal. 30, (1999), 937--948.

\bibitem{ConstantiW08}
P. Constantin, J. Wu, \emph{Regularity of H$\rm\ddot{o}$lder continuous solutions of the supercritical quasigeostrophic
equation}, Ann. Inst. H. Poincare Anal. Non Lineaire, 25 (2008), 1103--1110.

\bibitem{CAD}
A. C\'{o}rdoba, D. C\'{o}rdoba,  \emph{A maximum
principle applied to quasi-geostrophic equations}, Commun. Math.
Phys. 249 (2004), 511--528.

\bibitem{ZV}
M. Coti Zelati, V. Vicol, \emph{On the global regularity for the supercritical SQG equation}, Indiana Univ. Math. J. 65 (2016), 535--552.

\bibitem{Dab}
M. Dabkowski, \emph{Eventual regularity of the solutions to the supercritical dissipative quasigeostrophic equation}, Geom. Funct. Anal. 21 (2011), 1--13.

\bibitem{DP3}
R. Danchin, M. Paicu, \emph{Global existence results for the anisotropic
Boussinesq system in dimension two}, Math. Models Methods Appl. Sci.
21 (2011), 421--457.

\bibitem{DongD3}
H. Dong, D. Du, \emph{Global well-posedness and a decay estimate for the critical dissipative quasigeostrophic
equation in the whole space}, Discrete Contin. Dyn. Syst. 21 (4) (2008), 1095--1101.

\bibitem{DL}
H. Dong, D. Li, \emph{Spatial analyticity of the solutions to the subcritical dissipative quasigeostrophic equations}, Arch. Ration. Mech. Anal. 189 (2008), 131--158.

\bibitem{DongLi2}
H. Dong, D. Li, \emph{On the 2D critical and supercritical dissipative quasi-geostrophic equation in Besov spaces}, J. Differential Equations 248 (2010), 2684--2702.

\bibitem{Dongpa}
H. Dong, N. Pavlovic, \emph{A regularity criterion for the dissipation quasi-geostrophic equation}, Ann. Inst. H. Poincar${\rm\acute{ e}}$ Anal. Non Lin${\rm\acute{ e}}$aire 26 (2009), 1607--1619.

\bibitem{JUNING}
N. Ju, \emph{Existence and uniqueness of the solution to the dissipative 2D quasi-geostrophic equations
in the Sobolev space}, Comm. Math. Phys., 251 (2004), 365--376.

\bibitem{KPV2}
C. Kenig, G. Ponce, L. Vega, \emph{Well-posedness and scattering results for the generalized Korteweg-de Vries equation via the contraction principle}, Comm. Pure Appl. Math. 46 (1993), 527--620.

\bibitem{Kis2011}
A. Kiselev, \emph{Nonlocal maximum principles for active scalars}, Adv. Math. 227 (2011), 1806--1826.

\bibitem{KN2}
A. Kiselev, F. Nazarov, \emph{A variation on a theme of Caffarelli and Vasseur},  Zap. Nauchn. Sem. POMI 370 (2010), 58--72.

\bibitem{KNV}
A. Kiselev, F. Nazarov, A. Volberg, \emph{Global well-posedness for the
critical 2D dissipative quasi-geostrophic equation}, Invent. Math.
167 (2007), 445--453.

\bibitem{KKozonota}
H. Kozono, T. Ogawa, Y. Taniuchi, \emph{The critical Sobolev inequalities in Besov spaces and regularity criterion to some semi-linear evolution equations}, Math. Z. 242 (2002), 251--278.

\bibitem{LT}
J. Li, E. Titi, \emph{Global well-posedness of the 2D Boussinesq equations with
vertical dissipation}, Arch. Ration. Mech. Anal., 220 (2016), 983--1001.

\bibitem{Majdab}
A. Majda, A. Bertozzi, {\em Vorticity and Incompressible Flow}, Cambridge University Press, Cambridge, 2001.

\bibitem{MX1}
C. Miao, L. Xue, \emph{Global wellposedness for a modified critical
dissipative quasi-geostrophic equation}, J. Differential Equations
252 (2012), 792--818.

\bibitem{PG}
J. Pedlosky, {\em Geophysical fluid dynamics}, New York, Springer-Verlag,
1987.

\bibitem{Re}
S. Resnick, {\em Dynamical problems in non-linear advective partial differential equations}, Pro-Quest LLC, Ann Arbor, MI. Thesis (Ph.D.), The University of Chicago (1995).

\bibitem{SS}
M. Schonbek, T. Schonbek, \emph{Asymptotic behavior to dissipative quasi-geostrophic flows}, SIAM J. Math. Anal. 35 (2003), 357--375.

\bibitem{Sil}
L. Silvestre, \emph{Eventual regularization for the slightly supercritical quasi-geostrophic equation}, Ann. Inst. H. Poincare Anal.
Non Lineaire 27 (2010), 693--704.

\bibitem{SilVicolaz}
L. Silvestre, V. Vicol, A. Zlato$\rm \breve{s}$, \emph{On the loss of continuity for the super-critical drift-diffusion equations}, Arch. Rational Mech. Anal. 27 (2013), 845--877.

\bibitem{WangZ}
H. Wang, Z. Zhang, \emph{A frequency localized maximum principle applied to the 2D quasi-geostrophic equation}, Comm. Math. Phys. 301 (2011), 105--129.

\bibitem{W5}
J. Wu, \emph{The quasi-geostrophic equation and its two regularizations},
Comm. Partial Differential Equations 27 (2002), 1161--1181.

\bibitem{WXY}
J. Wu, X. Xu, Z. Ye, \emph{Global smooth solutions to the n-dimensional
damped models of incompressible fluid mechanics with small initial
datum}, J. Nonlinear Science 25 (2015), 157--192.

\bibitem{WXYjmfm}
J. Wu, X. Xu, Z. Ye, \emph{Global regularity for several incompressible fluid models with partial dissipation}, J. Math. Fluid Mech. 19 (2017),  423--444.

\bibitem{WXY17}
J. Wu, X. Xu, Z. Ye, \emph{The 2D Boussinesq equations with fractional
horizontal dissipation and thermal diffusion}, J. Math. Pures Appl. (9) 115 (2018), 187--217.
\end{thebibliography}
\end{document}